\newcommand*{\jfrac}[2]{\genfrac{}{}{0pt}{}{#1}{#2}}
\newenvironment{mnthm}[1]{%
  \manualtheoreminner
}{\endmanualtheoreminner}
\newenvironment{mncor}[1]{%
  \manualcorollaryinner
}{\endmanualcorollaryinner}
\newcommand*{\bif}[1]{\textbf{\emph{#1}}}
\newtheorem{defn}{Definition}
\newtheorem{lem}{Lemma}
\newtheorem{thm}{Theorem}
\newtheorem*{thm*}{Theorem}
\newtheorem*{cor*}{Corollary}
\newtheorem*{pthm*}{Teorema}
\newtheorem{ex}{Example}
\newtheorem{prop}{Proposition}
\newtheorem{probl}{\textbf{{Problem}}}
\newtheorem*{prob*}{Problema}
\newtheorem{cor}{Corollary}
\newtheorem{quest}{Q}
\definecolor{deeppink}{rgb}{1.0, 0.08, 0.58}
\definecolor{uibred}  {HTML}{db3f3d}
\definecolor{uibblue} {HTML}{4ea0b7}
\definecolor{uibgreen}{HTML}{789a5b}
\definecolor{uibgray} {HTML}{d0cac2}
\definecolor{uiblink} {HTML}{00769E}
\let\old@rule\@rule
\def\@rule[#1]#2#3{\textcolor{rulecolor}{\old@rule[#1]{#2}{#3}}}
\definecolor{rulecolor}{named}{uibred}
\newcommand{\C}{\mathbb{C}}
\newcommand{\CC}{\widehat{\mathbb{C}}}
\newcommand{\R}{\mathbb{R}}
\newcommand{\N}{\mathbb{N}}
\newcommand{\Z}{\mathbb{Z}}
\newcommand{\s}{\mathbb{S}}
\newcommand{\rr}{\overline{\mathbb{R}}}
\newcommand{\cc}{\overline{\mathbb{C}}}
\newcommand{\CPu}{\mathbb{C}\mathbb{P}^1}
\newcommand{\RPu}{\mathbb{R}\mathbb{P}^1}
\newcommand*{\myov}[1]{\overbracket[1.1pt][0pt]{#1}}
\newcommand{\tred}[1]{\textcolor{red}{#1}}
\definecolor{ForestGreen}{RGB}{34,139,34}
\theoremstyle{remark}
\newcommand{\xdashrightarrow}[2][]{\ext@arrow 0359\rightarrowfill@@{#1}{#2}}
\newcommand{\xdashleftarrow}[2][]{\ext@arrow 3095\leftarrowfill@@{#1}{#2}}
\newcommand{\xdashleftrightarrow}[2][]{\ext@arrow 3359\leftrightarrowfill@@{#1}{#2}}
\def\rightarrowfill@@{\arrowfill@@\relax\relbar\rightarrow}
\def\leftarrowfill@@{\arrowfill@@\leftarrow\relbar\relax}
\def\leftrightarrowfill@@{\arrowfill@@\leftarrow\relbar\rightarrow}
\def\arrowfill@@#1#2#3#4{%
  $\m@th\thickmuskip0mu\medmuskip\thickmuskip\thinmuskip\thickmuskip
   \relax#4#1
   \xleaders\hbox{$#4#2$}\hfill
   #3$%
}
\newcommand*\colvec[3][]{
    \begin{pmatrix}\ifx\relax#1\relax\else#1\\\fi#2\\#3\end{pmatrix}
} 
\begin{document}

\title{\textbf{A Combinatorial Presentation for Branched Coverings of the 2-Sphere.}}

\author{Arcelino Lobato \thanks{no afilliation} }

\maketitle


\begin{abstract}{\hspace{-.55cm}\emph{William Thurston} (1946-2012) gave a combinatorial characterization for generic branched self-coverings 
of the 
two-sphere by associating a planar graph to them \cite{STL:15}. By generalizing the notion of local balancing, the author extends the Thurston result to encompass any branched covering of the two-sphere. As an application, we supply a lower bound for the number of equivalence classes of real rational functions for each given ramification profile. Furthermore, as a consequence, we obtain a new proof for a theorem \cite{MR1888795, MR2552110, MTV2} that corresponds to a special case of a reality problem in enumerative geometry which was known as the B. \& M. Shapiro Conjecture, now it is a theorem \cite{MR2552110}. The theorem version that we prove concerns generic rational functions, assuring that if all critical points of that function are real, then we can transform it into a rational map with real coefficients by post-composition with an automorphism of $\CPu$. The proof we present is constructive and founded on elementary arguments.}
\end{abstract}

\tableofcontents
\newpage

\section{Introduction} 



In the current paper, we give a combinatorial description for orientation-preserving branched coverings of the two-dimensional sphere via a cellular graph that captures their ramification profile. 
In order to get that, we address to the following realization problem:

\begin{quest}\label{realiz-q}
\textcolor{uibred}{What kinds of graphs drawn on a genus $g$ compact orientable surface would be a pullback by an appropriate ramified cover of a Jordan curve running through the ramification points in the two-sphere, and what combinatorial properties would they have? }
\end{quest}

This is an effort to realize 
the conceptual purpose of encoding 
sophisticated mathematical structures/ objects/ theories through simpler ones. The most impressive and successful achievements in this sense were the Grothendieck Theory of \emph{Dessins d'Enfants} (\emph{Children's Drawings}). That theory was the Grothendieck’s answer to the problem of determining which Riemann surfaces are defined, as an algebraic curve, over the field of algebraic numbers. More than that, it was the realization of the ambitious Grothendieck's program aiming to provide a complete description of the automorphism group of the field of algebraic numbers, the \emph{Absolute Galois Group}. He showed that those 
 Riemann surfaces that can be realized as algebraic curves defined over the algebraic numbers are precisely those which can be constructed from 
graphs drawn on the underlying topological surfaces. These graphs are under the constraint that they decompose the surface in a disjoint collection of topological disks (see \cite{MR1305390}). This type of graphs are called cell graphs (or, cellular maps (see Definition $\ref{cell-g}$)). 

Around 2010, Thurston gave a combinatorial description for the generic ramified self-covers of the Riemann sphere by using a $4$-regular oriented graph drawn in the sphere. 
That was his answer to the conundrum he brought up in \emph{determining the shape of a rational function}. More precisely, furnishing a simple model that could encode the branching information of a rational function and from which the function could be (re)constructed. 

As far as we know, the most distant ancestor to this idea of capturing the essence of a mapping, as especified above, by restricting it to a  graph (finite union of $0$ and $1$ cells) is the combination of the \emph{Alexander (trick) lemma} \cite{MR3203728} with the \emph{Schöenflies theorem} \cite{MR728227} that allows us to distinguishes homeomorphisms of a closed $2$-cell, up to isotopy, by its restriction to the boundary circle.

Thurston has proved the following theorem. 

\begin{mnthm}{T}[Thurston - \cite{STL:15}]\label{T}
A $4$-regular planar oriented graph $\Gamma$ with $2d-2$ vertices is equal to $f^{-1}(\Sigma)$ for some generic degree $d$ branched covering, $f:\s^2 \rightarrow\s^2$, and some \emph{Jordan curve}, $\Sigma\subset \s^2$, that it contains the critical values of $f$, if and only if :
\begin{itemize}
\item[$\boldsymbol{0. }$]{\textbf{cellular decomposition:} every face of $\Gamma$ is a Jordan domain (in particular $\Gamma$ is connected)}
\item[$\boldsymbol{1. }$]{\textbf{global balancing:} for any alternating white-blue coloring of the faces of $\Gamma$, there are $d$ white faces, and there are $d$ blue faces, and }
\item[$\boldsymbol{2. }$]{\textbf{local balancing:} for any oriented simple closed curve $\gamma$ in $\Gamma$ that is bordered by blue faces on the left and white on the right (except at the corners), there are strictly more blue faces than white faces on the left side of $\gamma$.}
\end{itemize}
\end{mnthm}

{A degree $d$ branched covering of $\s^2$, is said to be \emph{generic} when it has the maximum number of critical values (branch points). For a self-covering of $\s^2$ that number namely equals $2d-2$.  Alternatively, when each critical point has a ramification index $2$ and their images are pairwise distinct. From the \emph{Riemann-Hurwitz formula} \cite{lando:03} for a genus $g$ branched covering of degree $d$ the maximal number of branching points is equal to $2(d+g-1)$. } 


We may naturally consider the \emph{Thurston's balanced graphs}  graphs as generalized \emph{Grothendieck's Dessins d'Enfants}. From each dessin d'enfant, we readily built a balanced graph by just marking a new vertex to each children's drawing face and then connecting it to all vertices incident to the face containing it. Those balanced graphs are all the same up to isotopy relative to the vertices of the dessin d'enfant. Nevertheless, in this paper, we will not trail in the direction stroked by Grothendieck. Still, it is worth mentioning that the graphs we present seem to be the most promising candidates to support the generalization of Grothendieck's theory, as Shabbat has pointed out in \cite{SH19}, and are also a potential tool for the Hurwitz theory. In a series of forthcoming papers in preparation, we address the enumeration problem of balanced graphs and relate it to Hurwitz theory, moduli spaces of curves and complex analysis. A class of operations defined on balanced graphs is another topic we explore. 

The main result in this paper is a far-reaching generalization of that Thurston's Theorem ($\textrm{\ref{T}}$).
It encompasses every degree $d>0$ branched coverings of $\s^2$  by closed topological surfaces of any genus and with any \emph{admissible ramification profile} (see Section $\ref{cap-03}$).  
To achieve that, we established a new definition of \emph{local balance} capable of comprising all positive genus compact surfaces.

The full general version we prove reads as follows:
\begin{mnthm}{}[ $\textrm{\ref{teo-a}}$ - \textcolor{uibred}{General version of a theorem by Thurston}]
A genus $g$ cell graph $\Gamma\subset S_g$ 
is equal to $f^{-1}(\Sigma)$ for some degree $d$ branched covering, $f: S_g  \rightarrow\s^2$, and some \emph{Jordan curve}, $\Sigma\subset \s^2$, that it contains the critical values of $f$, if and only if it is a balanced graph (i.e., globally and locally balanced).   
\end{mnthm}

Two classes of \emph{cellular graphs} are defined. They are called \emph{pullback graphs} ($\ref{pullbackg}$) and the \emph{admissible graphs} ($\ref{adm-g}$). The  \emph{pullback graphs} are actually generalized \emph{Dessins d'Enfants}. They are cellular graphs obtained by lifting through a ramified covering a \emph{Jordan curve} that contains the branch points of that mapping. An \emph{admissible graph} encodes a recipe for constructing a branched covering of $\s^2$ (see $\ref{bcconstruction}$).  Thus, we reduce the problem of showing that \emph{balanced graphs} are preimage by branched coverings of special curves in $\s^2$ into the matter of ensuring that we can promote it to an \emph{admissible graph}. To resolve that problem, it suffices demonstrating that an enriched balanced graph, \emph{i.e.}, those ones with some vertices of valence equal to $2$ inserted (see Definition $\ref{def:enrich-g}$ ), admits a good \emph{vertex labelling}, turning it on an \emph{admissible graph}. In the generic planar case \emph{Thurston} achieves this by resorting to \emph{Cohomology}. Alternatively, we translate this issue into a \emph{graph theoretical} problem, leading to a more elementary approach. Thus, we give a solution to it (see $\ref{vertex-capacity-fn}$). 

A generalization for \emph{orientation preserving branched selfcovering} of $\s^2$ of the Theorem $\ref{T}$ was also obtained by \emph{J. Tomasini} \cite{Tomtese, Tomart} in his doctoral thesis. 

\emph{Tomasini} does not use those planar graphs introduced by Thurston. 
He had considered a star map 
made up of a collection of Jordan arcs connecting a chosen regular point of the branched covering to each of its critical values. He takes the preimage of that cellular graph to get a combinatorial object associated with the ramified covering as  \emph{Thurston} had proposed. The combinatorial model that he obtained, as introduced above, turns out to be a cellular bipartite graph. 
\emph{Tomasine} translated \emph{Thurston's balance conditions} to the class of cellular bipartite planar graphs he takes into account. Having set all of that, a general planar version of the Theorem $\textrm{\textrm{\ref{teo-a}}}$ 
was provided by him, together with some results concerning the decomposition of balanced graphs. His surgery operations follows that decompositions operations introduced by \emph{Thurston} in \cite{STL:15}.

A \emph{real globally balanced real graph} is a distinguished type of planar graph characterized by being isotopic to a globally balanced graph that is invariant by the complex conjugation, whose all of its vertices are in $\RPu$ and $\RPu$ is a cycle on it.

In this paper, we count the \emph{real globally balanced graph} with $2d$ faces for each possible valence profile of the vertices. By \emph{valence profile}, we mean the list of the valence of the vertices for a fixed enumeration of them. 

Thus, let $\Gamma$  be a {real globally balanced graph} with an enumeration $C=\{c_1, c_2, \cdots , c_n\}\subset\RPu$ of its corners, respecting the cyclic order 
in the counter-clockwise sense. 

The \bif{Kostka Number} of shape $2 \times (d - 1)$ and weight $\textbf{a}$, denoted $K(d,{\textbf{a}})$, is the number of all \emph{SemiStandard Young Tableau} of shape $2 \times (d - 1)$ and weight $\textbf{a}$. For the definition of \emph{SemiStandard Young Tableau} consult section $\ref{ssyt}$

The author shows that,
\begin{mnthm}{}[$\textrm{\ref{thm-b}}$]
For every $n\in \{2, 3, \cdots, 2d-2\}$ points in ${\RPu}$  and $\emph{\textbf{a}}= (a_1, a_2,\cdots , a_n)\in\N^{n}$, such that $\sum_{k=1}^{n} a_k = 2d-2$, there exist $K(d, \emph{\textbf{a}})$ standard real globally balanced graphs whose \emph{valence profile} is the 
integer vector $2\textbf{a} + 2$ \emph{(}see  $\ref{2.5-sect}$\emph{)}.
\end{mnthm}

When the weight vector is $\textbf{a}= (1, 1,\cdots , 1)\in\N^{2d-2}$ the number $K(d, \emph{\textbf{a}})$ is the $d$-th \emph{Catalan Number}, $C(d) := \frac{1}{d}{2d-2\choose d-1}$. A real globally balanced graph coming through Theorem $\textrm{\ref{thm-b}}$ from a weight vector $\textbf{a}= (1, 1,\cdots , 1)\in\N^{2d-2}$ has all its vertices with a valence $4$.

Then, as a corollary, we had achieved:

\begin{mncor}{}[$\textrm{\ref{cor-b1}}$]
For every $2d-2$ points in ${\RPu}$ there exist $C(d) := \frac{1}{d}{2d-2\choose d-1}$ real globally balanced graphs with these points as vertices with valence $4$ \emph{(}see  $\ref{2.5-sect}$\emph{)}. 
\end{mncor}
Another related result that we also show is the following:

\begin{mnthm}{}[$\textrm{\ref{teo-c}}$]
Every \emph{globally balanced real graphs} is \emph{locally balanced}.  
\end{mnthm}

Theorem $\textrm{\ref{teo-c}}$ says that every real globally balanced graph is actually a balanced graph. Thus, applying our theorem $\textrm{\textrm{\ref{teo-a}}}$, under a normalization, we construct $K(d, {\textbf{a}})$ ramified covers that are pairwise not equivalent, where the equivalence is the equality up to automorphisms of the codomain Riemann sphere. 

Therefore, 
\begin{mnthm}{}[$\textrm{\ref{teo-d}}$]
Given a integer $d\geq 2$, for every $n\in \{2, 3, \cdots, 2d-2\}$ points in $\RPu$ o be critical points and vector $\emph{\textbf{a}}= (a_1, a_2,\cdots , a_n)\in\N^{n}$, such that $\sum_{k=1}^{n} a_k = 2d-2$, there exists at least $K(d, \emph{\textbf{a}})$ \emph{real  rational functions}, up to automorphisms of the range Riemann sphere, with those prescribed $n$ critical points.
\end{mnthm}

In \cite{EGSV05}, Eremenko, Gabrielov, Shapiro and Vainshtein had proved that the number of equivalence classes counted in the previous theorem is exactly the lower bound we provide.

In particular, it follows that,
\begin{mncor}{}[$\textrm{\ref{cor-d1}}$]
For every $2d-2$ prescribed points in ${\RPu}$ there are at least $C(d) := \frac{1}{d}{2d-2\choose d-1}$ real rational functions whose those chosen points are its critical points \emph{(}see  $\ref{2.5-sect}$\emph{)}. 
\end{mncor}

{\justifying The problem of counting the equivalence classes of rational functions of $\CPu$ for a given set of critical points was considered previously by \emph{Eisenbud} \& \emph{Harris} in \cite{EiH:83} and by \emph{Lisa Goldberg} in \cite{Gold:91}. In both treatments, that problem is reduced to a problem in Schubert Calculus on Grassmanians. 

\emph{Goldberg} \cite[Theorem~1.3.]{Gold:91} established that for $2d - 2$ points 
 $\CPu$ the number of equivalent classes of degree $d$ rational functions with those prescribed points as its critical  points (therefore, all of them with ramification index $2$) is at most the $d$-th Catalan Number, $C(d)$. 
 
Based on Schubert's Calculus, Representation Theory, Fuchsian Differential Equations and KZ Equation Theory, I. Scherbak \cite{Scherbak:02}  derived a combinatorial formula for the number of equivalence classes of rational functions according to the multiplicities for the critical points. But that formula are not as simple as desired. Further, Osserman \cite{Oss:03}, studying the reducibility of moduli space of Linear Series with prescribed ramification on a smooth curve $C$ (over an algebraic closed field with characteristic $0$), settles a recursive formula for the number of equivalence classes of maps from a curve of an arbitrary genus to $\mathbb{P}$ with specified critical points and multiplicities. The recursion is based on the number of rational functions with only three branch points. 

{\rem For the sake of reference, we would like to point out that we may get recursive formulae for the Kostka numbers of shape 2 (d-1) by looking at the formulas derived in \cite{Oss:03} e \cite{Scherbak:02}  and the enumeration established in [6].}

A related problem is determining a constraint on the set of prescribed critical points in $\CPu$ such that each equivalence class of rational functions with that critical points may have a real representative, i.e., an element that is the quotient of two polynomials with real coefficients and no common zeros. This corresponds to the simplest case of the \emph{B.} \& \emph{M. Shapiro} conjecture, 
which declares \emph{\textbf{given $m\geq 2$ and $d\geq 2$, if for a list of $m$ degree $m+d-2$ polynomials with complex coefficients, $R = (f_1 (z), f_2 (z), \ldots , f_m (z))\in(\C[z])^{m}$ linearly independent, the \emph{Wronski determinant} of $R$ has only real zeros, then $X_{R} = \langle f_1(z), f_2(z), \cdots , f_m (z)\rangle_{\C}$ is a vector subspace of $\C_{d}[z]$, the linear space of complex polynomials of degree at most $d>0$, that has a basis in $\R_{d}[z]$. In other words, there is a matrix $A \in \rm{GL}(2, \C)$ that transforms the basis $(f_1 (z), f_2 (z), \cdots , f_m (z))$ into the basis $(A\cdot f_1 (z), A\cdot f_2 (z), \cdots , A\cdot f_m (z))\in (R_{d}[z])^{m}$ made up of polynomial with real coefficients.}} 

The \emph{Wronski determinant} is the polynomial  
\[\textrm{W}(f_1, f_2, \cdots , f_m) := \det\left( \left( \frac{d}{dt}\right)^{i-1} f_j(t)\right)_{i,j=1,\cdots ,m}\]

The degree of $\textrm{W}(f_1, f_2, \cdots , f_m)$ is at most $m(d-1)$. The simpler case above mentioned corresponds to the case $m=2.$ The relation with rational functions on the Riemann sphere is as follows: Each non-constant \emph{rational function} $R=\dfrac{f}{g}$ with $f$ and $g$ co-prime can be naturally associated with the vector subspace of $\C_{d}[z]$ spanned by $f$ and $g$, $\langle f, g\rangle_{\C}$. A change of base matrix of a two plane in $\C[z]$ is a element $A=\left(\begin{smallmatrix}a&b\\c&d\end{smallmatrix}\right)$ of $\textrm{GL}(2, \C)$ that determine the automorphism of $\CPu$ given by $\alpha(z) = \dfrac{az + b}{cz + d}$. The left action (by multiplication) of $\textrm{GL}(2, \C)$ on the space of two dimensional subspaces of $\C_{d}[z]$ (the Grassmaniann of $2$-planes in the linear space $\C_{d}[z]$, $G(2, \C_{d}[z])$) corresponds to the left action of the automorphism group of $\CPu$ by composition on the space of (irreducible) rational function of $\CPu$. 

Moreover, the zeros of the \emph{Wronskian} of $R$, $W_{R} = f'g-fg'$, are the critical points of $R$. Therefore, for $m = 2$, the conjecture is equivalent to asserting that\emph{ if a rational function $R$ is such that all of its critical points are real, i.e., it is contained in $\RPu$, then it can be transformed into a quotient of two co-prime polynomials with real coefficients by a post-composition with a linear fractional transformation.}

For a comprehensive exposition, further generalized forms of the Shapiro's conjecture, as well as other relevant enumerative geometric problems, consult \cite{Sot-rsc}.  

The combination of our Corollary $\textrm{\ref{cor-d1}}$ with \emph{Goldberg}'s result \cite[Theorem~1.3.]{Gold:91} culminates into a new proof (see Theorem $\ref{sha-conj}$) for the case $m=2$ of Shapiros's conjecture (\emph{Eremenko-Gabrielov-Mukhin-Tarasov-Varchenko Theorem} \cite{MR1888795},\cite{MR2552110}). 
That demonstration achieve here is a more constructive and elementary resolution that the previous proof provided.

\section{Preliminaries}

We're going to present some basic concepts and results in this section.

\subsection{Graph Theory}

\begin{defn}[Cell Graph]\label{cell-g}
A \emph{cell graph} $\Gamma$ into an oriented topological surface $S=|\Gamma|$ is the $1$-skeleton (i.e., the union of the $0$ and $1$ cells) of a cellular decomposition of the surface $S$. Respectively, we call a $0$, $1$ and $2$ cell of a cell decomposition of a surface by \emph{vertex}, \emph{edge} and \emph{face}.

A cell graph is oriented if the boundary of each face is an oriented cycle.
The set of vertices, edges and faces of a cell graph $\Gamma$ are denoted by $V(\Gamma)$, $E(\Gamma)$ and $F(\Gamma)$, respectively.
\end{defn}

Note that we are denoting the underlying surface of a cell graph by $|\Gamma|$.

One another fundamental notion is the following 
especial partition of the set of faces of a cell graph. 

\begin{defn}[\textcolor{deeppink}{A}-\textcolor{blue}{B} alternating face coloring]
Let \textcolor{deeppink}{A} and \textcolor{blue}{B} be two colors.
An \textcolor{deeppink}{A}-\textcolor{blue}{B} alternating face coloring for a cell graph $G$ is an assignement of the colours \textcolor{deeppink}{A} and \textcolor{blue}{B} to the faces of $G$ in a manner that:
\begin{itemize}
\item{adjacents faces possesses different color attached to them;}
\item{the faces kept on the left side of its boundary are all of the color \textcolor{deeppink}{A}.}
\end{itemize}
\end{defn}

\begin{defn}[positivity of a cycle]\label{set-upththm}
Let $\Gamma$ be a cell graph 
with an \textcolor{deeppink}{A}-\textcolor{blue}{B} alternating face coloring. 

Each cycle into $\Gamma$ \emph{(}i.e., a concatenation of edges of $\Gamma$ that forms a simple closed curve\emph{)} that keeps only \textcolor{deeppink}{A} faces on its left side is said to be a \emph{positive cycle} of $\Gamma$ regarding such a face coloring.

\end{defn} 

We now introduce a \emph{graph theoretical} technical apparatus that is crucial to the realizability of a \emph{balanced graph} (see $\ref{balance}$) as a \emph{pullback graph} (see $\ref{pullbackg}$).

\begin{defn}[multi-extremal chargeable graph]\label{ch-g}
A multi-extremal chargeable graph $C:=C[I,O]:=C[I,X$ $;O,Y]$ is a bipartite graph $G[X,Y]$ (the underlying graph of $C$) with two distinguished set of vertices, an input set $I\subset X$ and an output set $O\subset Y$, together with a nonnegative real-valued function $c:V(G)-(I\sqcup O)\rightarrow \R_{>0}$. $c$ is the vertex-capacity function of $C$ and its value on an vertex $v$ is the capacity of $v$. When is necessary to emphasize the capacity function we say that $C$ is a multi-extremal chargeable graph with capacity $c$.

The vertices in 
$V(G)-I\sqcup O$ are called interior vertices. 
We denote by $int.V(G):=V(G)-(I\sqcup O)$ the subset of interior vertices. The edges with endpoints in $int.V(G)$ 
are called interior edges.  
\end{defn}

\begin{defn}[edge-weighting on a graph]\label{e-w-g}
A edge-weighting on graph $G$ is a real function $w:E(G)\rightarrow \R$. A graph with a edge-weighting is a weighted graph.
\end{defn}

\begin{defn}[feasible weighting]\label{f-weight}
A edge-weighting $w$ on a multi-extremal chargeable graph $G$  with capacity $c$ is feasible if it satisfies the following additional constraints:
\begin{itemize}
\item[$\boldsymbol(1)$]{$w$ is a real estrictly positive function,i.e., $w(E(G))\subset \R_{>0}$;}
\item[$\boldsymbol(2)$]{$\sum_{x\in N_G (v)} w(\{x,v\})=c(v)$ for each interior vertex $v\in int.V(G)$.}
\end{itemize}
 
The sums \[|w|^{in}:=\sum_{x\in N_G (I)}\sum_{\jfrac{v\in I;}{\{v,x\}\in E(G)}} w(\{v,x\})\]
and
\[|w|^{out}:=\sum_{x\in N_G (O)}\sum_{\jfrac{v\in O;}{\{v,x\}\in E(G)}} w(\{v,x\})\]
are respectively the input value and output value of $w$.

A multi-extremal chargeable graph $G$ with a feasible weighting is called  multi-extremal weighted graph.
\end{defn}

\begin{prop}[charge conservation]\label{vertex-capacity-fn}
Let $N=N[I,X;O,Y]$ be a multi-extremal chargeable graph with constant capacity $M\in\R_{>0}$. Then for any 
feasible weighting $w$ on $N$ the input and output values are equal.
\end{prop}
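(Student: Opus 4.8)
The plan is to prove the identity by a weighted handshake (double-counting) argument over the bipartition of $G$, exploiting that the capacity equals the same constant $M$ at every interior vertex. For a vertex $v$ write $d_w(v) := \sum_{x \in N_G(v)} w(\{x,v\})$ for its \emph{weighted degree}. The two quantities to be compared, $|w|^{in}$ and $|w|^{out}$, are nothing but $\sum_{v \in I} d_w(v)$ and $\sum_{v \in O} d_w(v)$: since $G[X,Y]$ is bipartite with $I \subset X$ and $O \subset Y$, every edge incident to $I$ (resp.\ $O$) has its other endpoint in $Y$ (resp.\ $X$), so the double sums in Definition \ref{f-weight} reorganize into these weighted-degree sums with no edge counted twice.

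First I would record the elementary bipartite identity $\sum_{v \in X} d_w(v) = \sum_{e \in E(G)} w(e) = \sum_{v \in Y} d_w(v)$, valid because each edge has exactly one endpoint in $X$ and one in $Y$ and hence contributes its weight once to each side. Next I would split each part along $X = I \sqcup int.V_X$ and $Y = O \sqcup int.V_Y$, where $int.V_X := X \setminus I$, $int.V_Y := Y \setminus O$, and $int.V(G) = int.V_X \sqcup int.V_Y$. Applying feasibility constraint $\boldsymbol{(2)}$ with constant capacity yields $\sum_{v \in int.V_X} d_w(v) = M\,|int.V_X|$ and $\sum_{v \in int.V_Y} d_w(v) = M\,|int.V_Y|$. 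Substituting into the bipartite identity gives
\[ |w|^{in} + M\,|int.V_X| = |w|^{out} + M\,|int.V_Y|, \]
so that $|w|^{out} - |w|^{in} = M\,(|int.V_X| - |int.V_Y|)$.

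The main obstacle, and indeed the only genuine content, is therefore to see that the two interior contributions cancel, i.e.\ that $G$ carries equally many interior vertices on the $X$-side as on the $Y$-side. I expect this to be guaranteed by the structural conventions attached to a multi-extremal chargeable graph in the construction where it arises (the bipartition coming from the two colour classes of faces of a globally balanced graph, whose cardinalities agree by global balancing, with input and output sets chosen as matched boundary data), so that $|int.V_X| = |int.V_Y|$. Granting that equality the displayed difference vanishes and $|w|^{in} = |w|^{out}$, which is the asserted charge conservation. I would note in passing that, since the difference above is forced for \emph{every} feasible $w$ irrespective of the actual weights, positivity plays no role in the equality itself and enters only to ensure that the feasible weightings under discussion are non-degenerate.
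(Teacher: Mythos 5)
Your handshake reduction is correct as far as it goes, and it is a genuinely different route from the paper, whose proof is an induction on the number of interior edges. The identity $|w|^{in} + M\,|int.V_X| = \sum_{e\in E(G)} w(e) = |w|^{out} + M\,|int.V_Y|$ holds exactly as you derive it, so for constant capacity the proposition is \emph{equivalent} to the equality $|int.V_X| = |int.V_Y|$. But that is precisely where your proof stops being a proof: you do not establish this equality, you only ``expect'' it from the ambient construction, and it is in fact not a consequence of Definition \ref{ch-g}. Take $X=\{i,x\}$, $Y=\{y_1,y_2,o\}$, $I=\{i\}$, $O=\{o\}$, edges $iy_1,\ iy_2,\ xy_1,\ xy_2,\ xo$, constant capacity $M=2$, and weights $w(xy_1)=w(xy_2)=\tfrac12$, $w(xo)=1$, $w(iy_1)=w(iy_2)=\tfrac32$. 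Every interior vertex ($y_1$, $y_2$, $x$) has weighted degree $2=M$, so $w$ is feasible; yet $|w|^{in}=3$ while $|w|^{out}=1$, consistent with your own identity since $|int.V_Y|-|int.V_X|=1$. So the cancellation you ask to be ``granted'' is exactly as strong as the proposition itself, and assuming it begs the question.

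What your reduction actually exposes (and this is its real value) is that the statement is only true under structural hypotheses that the bare definition does not impose and that the paper's induction uses tacitly: in its base case the initial edges all feed a single interior $Y$-vertex, one interior edge joins it to a single interior $X$-vertex, and that vertex feeds the terminal edges, a picture in which interior vertices pair off across the bipartition; likewise, in the application inside Lemma \ref{lththm}, the graph $G^{\ast}$ is assembled from input--output paths in the dual graph, along which interior vertices alternate between the two sides. To repair your argument you must isolate such a hypothesis — for instance, that the graph is a union of $I$-to-$O$ paths whose interior vertices alternate between $X$ and $Y$, or any condition yielding $|int.V_X| = |int.V_Y|$ — and prove the equality of interior counts from it; with that in hand, your one-line handshake conclusion is cleaner and more transparent than the paper's induction (which itself silently relies on the same structure when it deletes the subgraph $F$ and treats the restricted weighting as still feasible). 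Your closing remark is correct, though: positivity of $w$ plays no role in the equality; only the capacity constraint $\boldsymbol{(2)}$ enters.
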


\begin{proof}
The proof we are going to give will be by induction on the number of interior edges of the multi-extremal chargeable graph.

Let's start verifying the base case.

Let $N$ be a multi-extremal chargeable graph with constant capacity $M$ with only one interior edge $e\in E(N)$, $n$ initial edges and $m$ terminal edges. And let $w:E(N)\rightarrow \R_{>0}$ be a feasible weighting on $N$ assigning the weight $k>0$ to $e\in E(N)$.
Then,
\[|w|^{in} +k=M=k+ |w|^{out}\]
Thus, we have
\[|w|^{in}=|w|^{out}\]

\begin{figure}[H]
\begin{center}
\includegraphics[width=4cm]{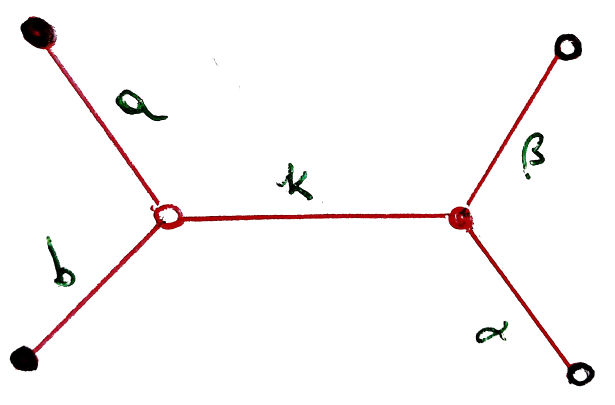}
\caption[chargeable graph]{$a+b+k=k+\alpha +\beta$}
\label{deg7pbg}
\end{center}
\end{figure}

Given $k>1$, we assume that for an arbitrary multi-extremal chargeable graph with constant capacity $M$ with $1<l\leq k$ interior edges, it is true that
\[|w|^{in}=|w|^{out}\]
for any feasible weighting $w:E(N)\rightarrow \R_{>0}$ on it.

Now, let $N'$ be a 
multi-extremal chargeable graph with constant capacity $M$ with $k+1$ interior edges and with a feasible weighting $w':E(N')\rightarrow \R_{>0}$ on $N'$. 

Let $E_1$ be a interior edge of $N'$ adjacent to at least one terminal edge of $N'$ and let $\epsilon_1=w'(E_1)>0$.

Let $a_1:=w'(A_1),a_2:=w'(A_2), \cdots ,a_p:=w'(A_p) $ be the list of the weights assigned by $w'$ to each terminal edge $A_h$ adjacent to $E_1$ with $h\in\{1,2,\cdots ,p\}$. 

There may exist more than one internal edge of $N'$ that is incident to the set of terminal edges $\{A_1, A_2,$ 
\\ $\cdots , A_p\}$. So, let $E_1, E_2, \cdots, E_u$ be those, possibly existing, edges with $\epsilon_l=w'(E_l)>0$ and let $\{b_{ij}\}_{j=1}^{u_i}$ the list of weights assigned by $w'$ to the interior edges of $N'$ that are incident to $E_{i}$.  And, let  $d_{j}>0$ for $j\in{1, 2, \cdots , r}$ be the list of weights assigned by $w'$ to all terminal edges of $N'$ different from those $E_i$ already considered.

Furthermore, let $F$ be the subgraph of $N'$ formed by the edges $\{A_1, A_2,  \cdots , A_p\}\cup\{E_1, E_2, \cdots , E_u\}$.

Then, the graph $N:=N'-F$ is a bipartite multi-extremal chargeable graph with constant capacity $M$ with $k+1-u\leq k$ interior edges and with a feasible weighting $w:w'|_{E(N)}:E(N)\rightarrow \R_{>0}$ on $N$. Thus, by the induction hypotesis,

 \begin{eqnarray}
 |w'|^{in}=|w|^{in}&=&|w|^{out}\\\nonumber
 &=& \left(\sum_{j=1}^{r}d_j\right) + \left(\sum_{i=1,j=1}^{u,u_i}b_{ij}\right)
\end{eqnarray}

But we also have
 \begin{eqnarray}
\left(\sum_{l=1}^{u}\epsilon_l\right) + \left(\sum_{i=1,j=1}^{u,u_i}b_{ij}\right) = M = \left(\sum_{l=1}^{u}\epsilon_l\right) +\left(\sum_{l=1}^{p}a_l\right)
\end{eqnarray}
Hence,
\[\sum_{i=1,j=1}^{u,u_i}b_{ij}=\sum_{l=1}^{p} a_l\]
Therefore,
\begin{eqnarray}
|w'|^{out} &=& \left(\sum_{j=1}^{r}d_j\right) + \left(\sum_{l=1}^{p}a_{l}\right)\\\nonumber
 &=&\left(\sum_{j=1}^{r}d_j\right) + \left(\sum_{i=1,j=1}^{u,u_i}b_{ij}\right)\\\nonumber
 &=& |w'|^{in}
\end{eqnarray}

\begin{figure}[H]
\begin{center}
\includegraphics[width=8.5cm, height=4cm]{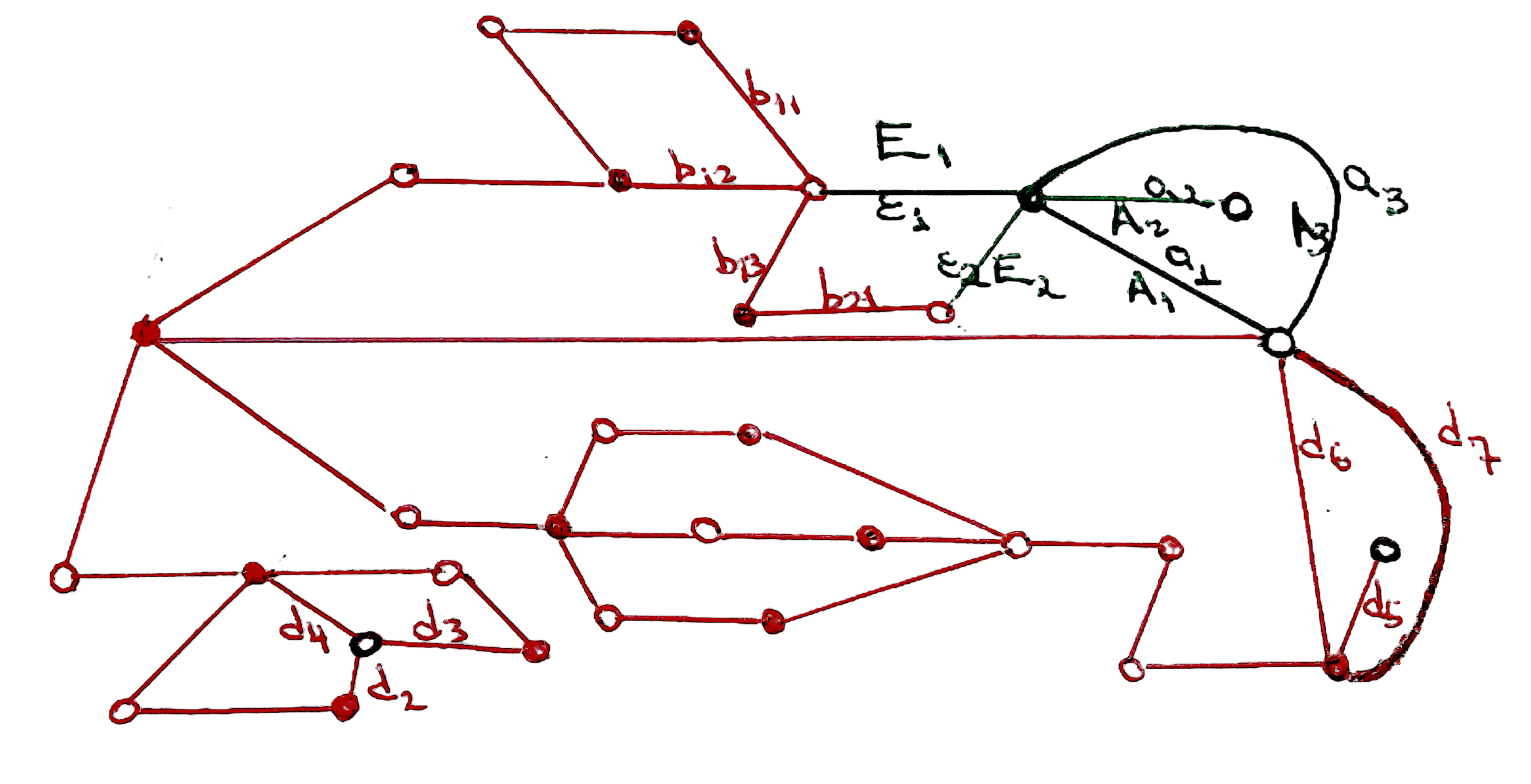}
\caption[weighted graph]{a weighted graph(with only the weights mentioned in the proof being visible)}
\label{deg7pbg}
\end{center}
\end{figure}
\end{proof}

\subsubsection{Machings on Graphs}

The Thurston key idea was to convert the realization issue $\ref{}$ into a graph theory matching problem. 

The following are the basic concepts and results we will need for from this subject.

\begin{defn}[matching]\label{match}
A \emph{matching} on a graph $G$ is a subset of edges $\mathcal{M}\subset E(G)$ that do not have vertices in common.
When a matching covers all vertices of $G$ it is called a \emph{perfect matching.}
\end{defn}

We refer to the problem of find out a matching on a bipartite graph as the \emph{Matching Problem}.

Note that if a bipartite graph $G := G[X, Y]$, with partition $\{X, Y\}$, admits a matching covering $X$, then the set of neighboring vertices ( from $Y$) for any subset of $X$, $S\subset X$, must contain at least as many vertices as S contains. The crux of \emph{Hall's theorem} is that such a condition suffices.
 
\begin{defn}[pontential mates]\label{pot-m}
Let $G=G[X, Y]$ be a bipartite graph. The set of potential mates for a collection of vertices in $X$, $S\subset V(G)\cap X$ is the set of neighbors of $S$ in $G$, i. e., 
\[N_{G}(S):=\{x\in V(G); \exists\; e\in E(G), \psi_G (e)=\{x,v\}\}.\]
\end{defn}

\begin{thm}[Hall's Merriage Theorem-\cite{BondyG}]\label{ksamento}
A bipartite graph $G := G[X, Y]$ has a matching which covers every vertex in $X$ if and only if
\[ |N_G(S)| \geq |S|\]
for all $S \subset X$.
\end{thm}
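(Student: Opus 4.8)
The plan is to prove the two implications separately, the forward direction being immediate and the reverse being handled by induction on $n := |X|$. For necessity, suppose $\mathcal{M}$ is a matching covering $X$; then $\mathcal{M}$ assigns to each $x \in X$ a distinct mate $m(x) \in Y$, so for any $S \subset X$ the vertices $\{m(x) : x \in S\}$ are $|S|$ distinct elements of $N_G(S)$, whence $|N_G(S)| \geq |S|$. This direction uses nothing beyond Definition \ref{match}. For sufficiency I assume \emph{Hall's condition} $|N_G(S)| \geq |S|$ for all $S \subset X$ and induct on $n$. The base case $n = 1$ is clear: the single vertex of $X$ has at least one neighbor and is matched to it. For the inductive step I split into two cases, according to whether every nonempty proper subset of $X$ has a strict surplus of neighbors, or some proper subset is \emph{tight}.

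Case 1 (surplus): suppose every nonempty $S \subsetneq X$ satisfies $|N_G(S)| \geq |S| + 1$. I choose any $x \in X$, match it to an arbitrary neighbor $y \in N_G(\{x\})$, and delete both vertices to form $G' := G[X - \{x\}, Y - \{y\}]$. Deleting $y$ decreases each neighborhood count by at most one, so every $S \subset X - \{x\}$ still satisfies $|N_{G'}(S)| \geq |S|$. The induction hypothesis then yields a matching covering $X - \{x\}$, to which I append the edge $\{x, y\}$ to cover all of $X$.

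Case 2 (tight set): suppose some nonempty $S_0 \subsetneq X$ has $|N_G(S_0)| = |S_0|$. Here I decompose $G$ into $G_1 := G[S_0, N_G(S_0)]$ and $G_2 := G[X - S_0, Y - N_G(S_0)]$. Hall's condition passes to $G_1$ directly, since the neighbors of any $S \subset S_0$ all lie inside $N_G(S_0)$, so $N_{G_1}(S) = N_G(S)$. It passes to $G_2$ because for any $T \subset X - S_0$ one computes $|N_{G_2}(T)| = |N_G(S_0 \cup T)| - |N_G(S_0)| \geq (|S_0| + |T|) - |S_0| = |T|$, applying Hall's condition to $S_0 \cup T$ together with the tightness $|N_G(S_0)| = |S_0|$. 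Since both $S_0$ and $X - S_0$ are proper subsets, the induction hypothesis supplies matchings $\mathcal{M}_1$ and $\mathcal{M}_2$ covering $S_0$ and $X - S_0$ respectively; as these use disjoint vertex sets inside $Y$, the union $\mathcal{M}_1 \cup \mathcal{M}_2$ is a matching covering all of $X$, completing the induction.

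The main obstacle is the tight-set case. The delicate point is checking that Hall's condition survives restriction to $G_2$: this is exactly where the equality $|N_G(S_0)| = |S_0|$ (rather than a mere inequality) is required, and where the subtractive count $|N_{G_2}(T)| = |N_G(S_0 \cup T)| - |N_G(S_0)|$ must be justified by noting that $N_{G_2}(T) = N_G(T) \setminus N_G(S_0)$ and that $N_G(S_0 \cup T) = N_G(S_0) \cup N_G(T)$. Once this bookkeeping is in place the argument is purely combinatorial and self-contained, matching the elementary spirit in which the theorem will be applied to the vertex-labelling of enriched balanced graphs.
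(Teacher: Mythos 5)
Your proof is correct. Note, however, that the paper does not actually prove Theorem \ref{ksamento} at all: it is quoted with a citation to \cite{BondyG} and used as a black box (once in the proof of Theorem \ref{teo-a}, via the necessity direction for pullback graphs and the sufficiency direction for balanced graphs). So there is no in-paper argument to compare against; what you have supplied is the classical Halmos--Vaughan induction, with the standard dichotomy between the all-surplus case (delete a matched pair $x,y$ and observe each neighborhood shrinks by at most one) and the tight-set case (split along $S_0$ with $|N_G(S_0)|=|S_0|$ and verify Hall's condition for both induced pieces). Your bookkeeping in the tight-set case is the right one, and the identities $N_{G_2}(T)=N_G(T)\setminus N_G(S_0)$ and $N_G(S_0\cup T)=N_G(S_0)\cup N_G(T)$ are exactly what make the subtractive count legitimate. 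Two small points worth making explicit: in Case 1 the existence of a neighbor $y$ of $x$ is itself an application of Hall's condition to the singleton $\{x\}$ (for $n\geq 2$ the surplus hypothesis even gives $|N_G(\{x\})|\geq 2$, but condition alone suffices); and in Case 2, when $T=X-S_0$ the set $S_0\cup T$ equals $X$, so you are implicitly reading the hypothesis ``for all $S\subset X$'' as including $S=X$ --- the same convention the paper uses, but it deserves a word since the strict-inclusion reading would break that step. With those remarks in place your argument is complete, self-contained, and consistent with the elementary tenor of the paper, which otherwise outsources this result.
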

\begin{cor}[Perfect Matching Theorem]\label{cor-ksamento}
A bipartite graph $G := G[X, Y]$ has a perfect matching if and only if
$|X| = |Y|$ and $|N_G (S)| \geq |S|$ for all $S\subset X$.
\end{cor}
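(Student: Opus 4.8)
The plan is to derive this corollary directly from Hall's Marriage Theorem (Theorem~\ref{ksamento}), exploiting the elementary observation that in a bipartite graph a matching that saturates $X$ automatically becomes perfect once we know that $|X| = |Y|$. I would organize the argument as the two implications of the biconditional.

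For the forward direction, I would assume $G$ admits a perfect matching $\mathcal{M}$. Since $\mathcal{M}$ covers every vertex of $G$, it in particular covers every vertex of $X$; hence by the ``only if'' direction of Hall's theorem the inequality $|N_G(S)| \geq |S|$ holds for all $S \subset X$. Moreover, since $G$ is bipartite, each edge of $\mathcal{M}$ has exactly one endpoint in $X$ and one in $Y$, and no two edges of $\mathcal{M}$ share a vertex; thus $\mathcal{M}$ defines an injection from $X$ into $Y$. As $\mathcal{M}$ also covers $Y$, this injection is in fact a bijection, which forces $|X| = |Y|$.

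For the reverse direction, I would assume $|X| = |Y|$ together with the Hall condition $|N_G(S)| \geq |S|$ for all $S \subset X$. The ``if'' direction of Hall's theorem then produces a matching $\mathcal{M}$ covering every vertex of $X$. Because the edges of $\mathcal{M}$ are pairwise vertex-disjoint and each joins a vertex of $X$ to a vertex of $Y$, the matching saturates exactly $|X|$ distinct vertices of $Y$; since $|Y| = |X|$, every vertex of $Y$ is therefore covered as well. Hence $\mathcal{M}$ covers all of $V(G) = X \sqcup Y$ and is a perfect matching, as required.

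The argument is essentially immediate, so I anticipate no genuine obstacle; the only point requiring care is the counting step showing that a matching covering $X$ already exhausts $Y$ when $|X| = |Y|$, which is precisely where the hypothesis on the cardinalities enters (and without which the equivalence fails, since a matching saturating $X$ need not be perfect).
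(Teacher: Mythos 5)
Your proof is correct and follows exactly the route the paper intends: the paper states this corollary immediately after Hall's Marriage Theorem (Theorem~\ref{ksamento}) without a written proof, precisely because the standard derivation you give --- Hall's condition plus $|X|=|Y|$ yields an $X$-saturating matching that must also exhaust $Y$ by counting, and conversely a perfect matching induces a bijection $X \to Y$ and satisfies Hall's condition --- is the intended argument. Nothing is missing, and your emphasis on where the cardinality hypothesis enters is exactly the right point of care.
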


\subsection[Branched Coverings and Cell Graphs]{Branched Coverings and Cell Graphs}
\label{cap-03}
Let $S_g$ be a genus $g$ topological closed surface (i.e., compact and without boundary) and $f:S_g\rightarrow \s^2$ a \emph{degree $d\geq 2$ orientation-preserving branched covering} of $\s^2$ with $m$ critical values.


Let $\Sigma$ be an oriented Jordan curve passing through the critical values (branch points) of $f$, $R(f)$, and
 ${\Gamma}_{f} = \Gamma(f, \R) := f^{-1}(\Sigma)$ their inverse image by $f.$ $\Gamma$ is a cellularly embedded graph in $S_g$. $C(f)$ denotes the set of ramification (critical) points of $f$. We enumerate the $m=|R(f)|$ critical values of $f$ regarding the order that  $\Sigma$ runs through them in the positive sense according to the orientation of $\s^2$. The points in the fiber over a critical value are labelled by its label in that enumeration of $R(f)$. 
 
Henceforward, $f:S_g\rightarrow \s^2$ will be as above, unless otherwise indicated.

\begin{defn}[Post-critical curve]\label{def:pcc}
A \emph{post-critical curve} for $f$ is an isotopy class, relative to $R(f)$, of a \emph{Jordan curve} $\Sigma\subset\s^2$ passing through the critical values of $f$ into $\s^2.$ Such an isotopy class will be simply denoted by $\Sigma$, some representative of it.
\end{defn}



\begin{defn}[Pullback graph ($\mathfrak{G}$Dessin d'Enfant)]\label{pullbackg}
Given a post-critical curve $\Sigma$ for $f:S_g\rightarrow \s^2$. 

The \emph{pullback graph} of $f$ with respect to $\Sigma$, or simply, $\Sigma$-\emph{pullback graph} of $f$ is the isotopy class of ${\Gamma}:=f^{-1}(\Sigma)$ relative to $C_f$. 

The \emph{vertex set} of ${\Gamma}$ is the set 
$V({{\Gamma}})=f^{-1}({R_f})$. 

The \emph{edge set} of ${\Gamma}$, $E({{\Gamma}})$, is the set of the arcs into ${\Gamma}$ connecting pairs of points in $V({{\Gamma}})$. 

Given a Jordan curve $\Sigma\supset R(f)$ the set $f_{-1}(\Sigma)$ fills $S_g$, it separates $S_g$ into a finite disjoint union of 2-cells (topological disks).

And each connected component of $X-{\Gamma}$ is a \emph{face of} ${\Gamma}$ (or, boundary component). They are topological disks. 
\end{defn}

\begin{defn}[passport of a branched covering of $\s^2$]\label{passp}
Let $f:{S}_g\rightarrow \s^2$ be a degree $d$ branched covering of $\s^2$ with 
branch locus $\{w_1,w_2 , ..., w_m\}$.
The passport $\pi=\pi(f)$ of $f$ is the list of $m$ integer partition of $d$, $\pi(f)=[\pi_{1}, \pi_{2},..., \pi_{m}]$ with $\pi_{j}=[d_{(j, 1)}, d_{(j, 2)}\cdots, d_{(j, l_j)}]\in\Z/\mathcal{S}_{l_j}$, encoding the ramification profile of the fibers over the branch points. That is, the numbers $d_{(j, k)}$ are the multiplicities of the 
points in the fibre of $f$ over the critical value $w_j \in R(f)$.

Thus, for each $j\in\{1, 2, \cdots, m\}$, the data $\pi_{j} =[ d_{(j,1)}, d_{(j,2)}\cdots, d_{(j,l_j)}]$ is such that
\begin{itemize}
\item{$d_{(j, k)}\in\{1,2 ..., d\}$ for all $k\in\{1,2, \cdots, l_j\}$;}
 \item{for at least one $k\in\{1, 2, ..., l_j\}$, $d_{(j,k)}\neq 1$;}
 \item{$d=\sum_{k=1}^{l_j}d_{(j,k)}$}
\end{itemize} 
More conveniently, we will often write the partition $\pi_j$ as follows:
 \begin{eqnarray}\pi_j^{\ast} &=&(a_{1}^{p^{j}(a_1)}, a_2^{p^{j}(a_2)}, 3^{p^{j}(a_3)}, ..., n_j^{p^{j}(a_{n_j})})\end{eqnarray} with $1\leq a_1<  a_2< \cdots< a_{n_j}\leq d$ and $p^{j}(a)=|\{k\in\N; d_{(j,k)} = a\}|$.\hfill\\
\emph{(}Hence, $d=\sum_{n=1}^{n_j}a_n p^{j}(a_n)$ and $l_j =\sum_{n=1}^{n_j} p^{j}(a_n)$\emph{)}
\end{defn}

Given a Jordan curve $\Sigma\supset R(f)$, let  $\pi(f)=[\pi_1^{\ast} , \pi_{2}^{\ast}, ..., \pi_{m}^{\ast}]$ be the passport of $f$. 

For each 
point $w_j\in R(f)$ there are $l_j = \sum_{n=1}^{n_j} p^{j}(a_n)$ vertices in ${\Gamma}$ that are projected over it by $f$. We label each of those vertices with the number $j\in \{1, 2, \cdots, m\}.$ 
Among them there are $p^{j}(a_n)$ vertices of valence $2\cdot a_n$ for each $n\in\{1, 2, \cdots, n_j\}$. 
And $|V({{\Gamma}})|=\sum_{j=1}^{m}l_j.$

The vertices of valence strictly greater than $2$ are the critical points of $f$ and the other vertices are regular preimages of the critical values of $f$. 

Each connected component of $X-f^{-1}(\Sigma)$ is mapped injectively by $f$ over the $2$-cell in the left or right side of $\Sigma\subset\s^2$. Hence they are also topological disks. Each face has as its boundary a finite union of \emph{Jordan archs} connecting points of $f^{-1}(R_f)$. 
Therefore, \emph{Pullback graphs} are cell graphs.

Coloring the left side of $\Sigma\subset\s^2$ by the color \textcolor{deeppink}{A} and the right side of it by the color \textcolor{blue}{B}, an \textcolor{deeppink}{A}-\textcolor{blue}{B} alternating coloring for the faces of $\Gamma$ is determined.

\begin{figure}[H]
\centering
\subfloat{\tikz[remember
picture]{\node(1AL){\includegraphics[width=4cm]{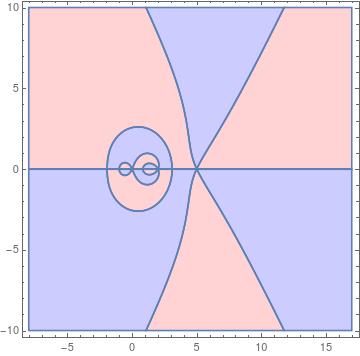}};}}%
\hspace*{1.5cm}%
\subfloat{\tikz[remember picture]{\node(1AR){\includegraphics[width=4cm]{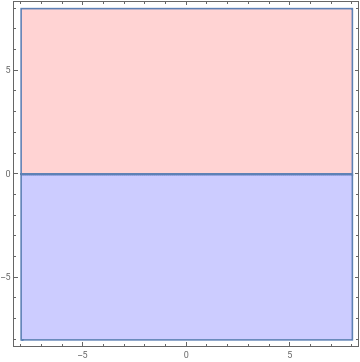}};}}
\caption[degree $7$ pullback graph]{$f(z)=$\small{$\dfrac{500- 50z- 1215z^2 +1388.5z^3 -674.5z^4  +166.5z^5 -20.5z^6 +z^7 }{-z^3 +z^4 }$}}
\end{figure}
\tikz[overlay,remember picture]{\draw[-latex,thick] (1AL) -- (1AL-|1AR.west)
node[midway,below,text width=1.5cm]{\hspace{.65cm}$f$};}


\begin{prop}[struture of a pullback graph]\label{prop-shape-graph} 
Let $f:X\rightarrow \s^2$ with passport $\pi_{f} = (\pi_{1}^{\ast}, \pi_{2}^{\ast}, ..., \pi_{m}^{\ast})$. Then, ${\Gamma}={\Gamma}_f(\Sigma):=f^{-1}(\Sigma)$ is a connected embedded graph on $X$ with $2d$ faces and $\sum_{j=1}^{m}l_j$ vertices such that each of its faces is a \emph{Jordan domain} containing on its boundary only one vertex corresponding to each critical value of $f$ with the labelings appearing cyclically ordered around it.

Furthermore, we have $p^{j}(a_n)$ vertices in $V({\Gamma})$ of valence $2a_n$ that are sent to  the critical value $w_j$ for each $j\in\{1, 2, ..., m\}$ and $n\in\{1, 2, ..., n_j\}.$
\end{prop}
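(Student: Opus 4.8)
The plan is to analyse $\Gamma=f^{-1}(\Sigma)$ by exploiting that $\Sigma$ is a Jordan curve whose complement consists of two open topological disks and that all branch values of $f$ lie on $\Sigma$. Write $\s^2\setminus\Sigma=D_A\sqcup D_B$ for the left (colour \textcolor{deeppink}{A}) and right (colour \textcolor{blue}{B}) components, each a Jordan domain by the Schoenflies theorem. Since $R(f)\subset\Sigma$, neither $D_A$ nor $D_B$ contains a critical value, so $f$ restricts to an honest covering $f^{-1}(D_A)\to D_A$ of degree $d$; as $D_A$ is simply connected this covering is trivial, whence $f^{-1}(D_A)$ is a disjoint union of $d$ open disks each carried homeomorphically onto $D_A$, and likewise for $D_B$. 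The components of $X\setminus\Gamma=f^{-1}(D_A)\sqcup f^{-1}(D_B)$ are exactly the faces, so there are $2d$ of them. For the vertices, $V(\Gamma)=f^{-1}(R(f))=\bigsqcup_{j=1}^{m}f^{-1}(w_j)$ and $|f^{-1}(w_j)|=l_j$ by the definition of the passport, giving $|V(\Gamma)|=\sum_{j}l_j$.

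Next come the valences and the local picture at a vertex. Let $p\in f^{-1}(w_j)$ have local degree $a$, so that in suitable charts $f$ is $z\mapsto z^{a}$ with $\Sigma$ corresponding to the local coordinate axis through $w_j$ (two regular half-arcs abutting $w_j$). The preimage of these two half-arcs under $z\mapsto z^{a}$ consists of $2a$ half-arcs issuing from $p$, alternately bounding \textcolor{deeppink}{A}- and \textcolor{blue}{B}-sectors; these are precisely the edge-germs at $p$, so $p$ has valence $2a$. Hence over $w_j$ there are $p^{j}(a_n)$ vertices of valence $2a_n$ for each $n$, which is the last assertion; in particular the regular preimages (local degree $1$) have valence $2$ and the critical points have valence $>2$.

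The heart of the argument, and the step I expect to be the main obstacle, is to show that each face is a Jordan domain meeting the fibre over every $w_j$ in exactly one boundary vertex, in the prescribed cyclic order. The difficulty is purely at the branch points: one must rule out a face wrapping around or pinching at a critical point, which the trivial-covering argument over the open disk does not detect. To control this, set $U:=\overline{D_A}\setminus\{w_1,\dots,w_m\}$, a closed disk with $m$ boundary punctures, hence contractible; every point of $U$ is a regular value (interior points of $D_A$ and the open arcs of $\Sigma$ between consecutive $w_j$), so $f$ restricts to a proper local homeomorphism $f^{-1}(U)\to U$, i.e.\ a degree-$d$ covering, which is trivial because $U$ is simply connected. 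Thus $f^{-1}(U)$ splits into $d$ sheets $\widetilde U_1,\dots,\widetilde U_d$, each mapped homeomorphically onto $U$, and each sheet is a single face together with the open edges on its frontier. Reading the local model $z\mapsto z^{a}$ once more, a punctured half-disc neighbourhood of $w_j$ in $U$, which is connected, lifts inside a fixed sheet to a single connected punctured sector converging, as $z\to 0$, to one point of $f^{-1}(w_j)$; therefore each sheet acquires exactly one vertex over each $w_j$ in its closure. Consequently $f$ maps $\overline{\widetilde U_i}$ continuously and bijectively onto $\overline{D_A}$, and being a continuous bijection from a compact space to a Hausdorff space it is a homeomorphism. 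Hence every face is a Jordan domain whose boundary circle is carried homeomorphically onto $\Sigma$, so it runs once through a vertex over each of $w_1,\dots,w_m$ following the cyclic order in which $\Sigma$ visits them (the orientation being inherited from that of $f$); the same applies to the \textcolor{blue}{B}-faces over $D_B$.

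Finally, connectedness of $\Gamma$ follows formally from the face analysis. Suppose $\Gamma=\Gamma_1\sqcup\cdots\sqcup\Gamma_c$ were its decomposition into connected components. Each face boundary is a single Jordan curve, hence lies in one component $\Gamma_{k}$; grouping the closed faces according to the index $k$ of the component containing their boundary yields closed sets $X_1,\dots,X_c$ with $X=\bigcup_{k}X_k$. Two of them can meet only along $\Gamma$, and there only in $\Gamma_k\cap\Gamma_{k'}=\varnothing$ for $k\neq k'$, so the $X_k$ are pairwise disjoint nonempty closed sets covering the connected surface $X$; therefore $c=1$ and $\Gamma$ is connected. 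Together with the local edge-germ description, which shows $\Gamma$ is embedded and that the faces glue along edges into the stated cell structure, this establishes the proposition.
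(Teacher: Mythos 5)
Your proof is correct, and while it shares the paper's basic strategy --- restrict $f$ to the unbranched part, use the Jordan--Schoenflies theorem and unique lifting over simply connected pieces, and read off valences from the local model $z\mapsto z^{a}$ --- it is organized around a genuinely different and finer decomposition. The paper lifts the two open complementary disks of $\Sigma$ and the open arcs $\Sigma(w_j,w_{j+1})$ \emph{separately}, obtaining sections that show the edges are Jordan arcs and the faces are Jordan domains, and then simply defers everything else (``the rest of the content was clarified above the proposition''): in particular the claims that each face closure meets the fibre over each $w_j$ in \emph{exactly one} vertex, that the labels appear in the cyclic order of $\Sigma$, and that $\Gamma$ is connected are asserted in the surrounding discussion rather than argued in the proof. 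Your device of taking $U=\overline{D_A}\setminus\{w_1,\dots,w_m\}$, trivializing the proper local homeomorphism $f^{-1}(U)\to U$ over this contractible base, and then identifying each sheet as one face together with its $m$ boundary edges, handles the face and its frontier in a single covering-theoretic step; the punctured half-disk analysis at the $w_j$ plus the compact-to-Hausdorff continuous-bijection argument then rigorously yields $\overline{\widetilde U_i}\cong\overline{D_A}$, hence one vertex per critical value per face in the correct cyclic order --- precisely the point the paper's open-disk sections cannot see, since they give no control over how a face closes up at the branch points. Your closing argument for connectedness (grouping closed faces by the component of $\Gamma$ containing their boundary curve and invoking connectedness of $X$) likewise fills a gap the paper leaves open. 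In short: same covering-space skeleton, but your route buys a complete proof of the boundary incidence structure and of connectedness, at the modest cost of the extra bookkeeping over the punctured closed disk.
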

\begin{proof}
Let $\Sigma$ be a $\emph{Jordan}$ curve passing through the critical values of $f$. $f|_{X-\{f^{-1}(R(f))\}}:X-\{f^{-1}(R(f))\}\longrightarrow \s^{2}-R(f)$ is a covering map. Let $\Sigma[w_{j}, w_{j+1}]$ be the Jordan arch of $\Sigma$ connecting $w_j$ to $w_{j+1}$, $\Sigma(w_{j}, w_{j+1}) =  \Sigma[w_{j}, w_{j+1}] - \{w_{j}, w_{j+1}\}$ 
and $a\in\Sigma(w_{j}, w_{j+1})$. 

The fiber of $f$ over $a$ contains $d$ distinct points. For each point $x_i \in f^{-1}(a)$ the inclusion map $Id_\Sigma(w_{j}, w_{j+1}):\Sigma(w_{j}, w_{j+1}) \hookrightarrow \s^2 - R(f)$ lifts uniquely to a map $S_{i}:\Sigma(w_{j}, w_{j+1})\longrightarrow X-C(f)$ over the component of $X - C(f)$ that contains $x_i$, giving therefore a section to $f$ over $\Sigma(w_{j}, w_{j+1}) - \{\}$ of ${\Gamma}$. Thus, being $S_{i}$ homeomorphisms over its image, then $f^{-1}(\Sigma(w_j , w_{j+1}))$ is a collection of Jordan arcs connecting the preimages of $w_j$ to preimages of $w_{j+1}$ by $f$.

Similarly we can prove that the components of $f^{-1}(\s^2 - \Sigma)$  are Jordan domains as it follows.

By \emph{Jordan-Sch\" oenflies theorem}, $\s^{2}-\Sigma$ is a disjoint union of two Jordan domains, say $A$ and $B$. Let $a\in A$ and $b\in B$. 

$f|_{X-\{f^{-1}(\Sigma)\}}:X-\{f^{-1}(\Sigma)\}\longrightarrow \s^{2}-\Sigma$ is a cover. 
Then the fibers of $f$ above $a$ and $b$ contains, each one, $d$ distincts points. For each point $x_i \in f^{-1}(a)$ and $y_i \in f^{-1}(b)$ the inclusion maps $Id_A : A\hookrightarrow \s^2 -\Sigma$ and  $Id_B : B\hookrightarrow \s^2 -\Sigma$ lifts uniquely to a map $S_{1a} : A\longrightarrow X-\Gamma$ and $S_{1b} : B\longrightarrow X -\Gamma$ over the component of $X -\Gamma$ that contains $x_1$ and $y_1$, respectively, giving therefore a section to $f$ over each face of ${\Gamma}$. Thus, being $S_{1a}$ and $S_{1b}$ homeomorphisms over its image, $X-\{f^{-1}(\Sigma)\}$ is a union of $2d$ open sets that are homeomorphic to Jordan domains. Then, we are done. 

The rest of the content was clarified above the proposition.
\end{proof}

\begin{defn}\label{corner-saddle}
A vertex in $C_f \subset V({{\Gamma}})$ will be called a \emph{corner} of $\Gamma$ and a path in ${\Gamma}$ connecting two corners will be called a \emph{saddle-connection} of $\Gamma$.
\end{defn}

Thereby, the realization problem $\ref{realiz-q}$ may now be expressed as:
\begin{quest}\label{realiz-q2}
\textcolor{uibred}{What sort of cell graphs can be realized as a pullback graph, that is, who of them are realized as the embedded graph $f^{-1}(\Sigma)\subset S_g$ for some ramified covering $f:S_g\rightarrow \s^2$ and postcritical curve $\Sigma$?}\end{quest}

This question is motivated by the following 
issue raised by Thurston:
 
\begin{quest}\label{Q2}
\begin{center}
\textcolor{uibred}{What is the shape of a rational map?} \emph{(}see \cite{BillQ}\emph{)}
\end{center}
\end{quest}

According to Proposition $\ref{prop-shape-graph}$, the embedded graphs wondered in \textcolor{uibred}{\textrm{Q}} $\ref{realiz-q2}$must be among those cellular graphs that allow a textcolor{deeppink}{A}-\textcolor{blue}{B} alternating face coloring made up of an equal number of textcolor{deeppink}{A}  and \textcolor{blue}{B} faces, with corners incident to the same face only once. Moreover, for which it is possible to group the vertices suitably that are compatible with a branched covering passport, as shown in figure $\ref{pullb-model1}$. 
This latter condition will be properly presented and examined in the next section $\ref{admg's}$. 

\begin{figure}[H]
\centering
\subfloat{\tikz[remember
picture]{\node(1AL){\includegraphics[width=4cm]{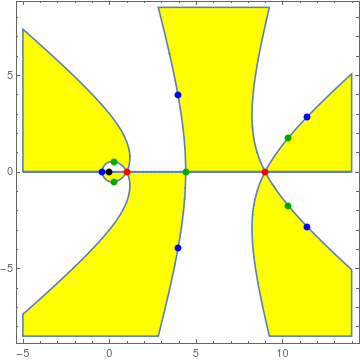}};}}%
\hspace*{1.5cm}%
\subfloat{\tikz[remember picture]{\node(1AR){\includegraphics[width=4cm]{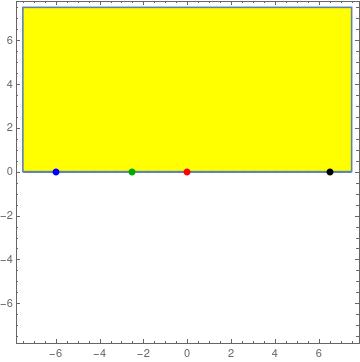}};}}
\caption{}\label{pullb-model1}
\end{figure}
\tikz[overlay,remember picture]{\draw[-latex,thick] (1AL) -- (1AL-|1AR.west)
node[midway,below,text width=1.5cm]{\hspace{.65cm}$f$};}

Note that from the Proposition $\ref{prop-shape-graph}$, \emph{pullback graphs} does not have \emph{saddle-connection} connecting a corner to itself. The cell graphs considered here will not possesses vertices that are incident more than once to a face of the graph. Hence, cell graphs having loops are not considered. Actually, that is a consequence of the \emph{balance conditions}.

In the sequel, we introduce the  \emph{global balance condition}.

\begin{defn}[globally balanced graph]\label{fefn-bg}
A \emph{Globally Balanced Graph} of type $(g, d, m)$
is a cell 
graph on an oriented compact surface of genus $g$, $S_g$, with $2d$ faces, $m$ corners 
 and which admits an \textcolor{deeppink}{A}-\textcolor{blue}{B} alternating face coloring 
 with 
 $d$ faces colored by each color. We say also that such an embedded graph satisfy the \emph{Global balance condition}.
\end{defn}
\begin{figure}[H]
\begin{center}
\subfloat[Globally Blalanced graph of type $(2, 4, 6)$]
{{\includegraphics[width=5cm,height=3.15cm]{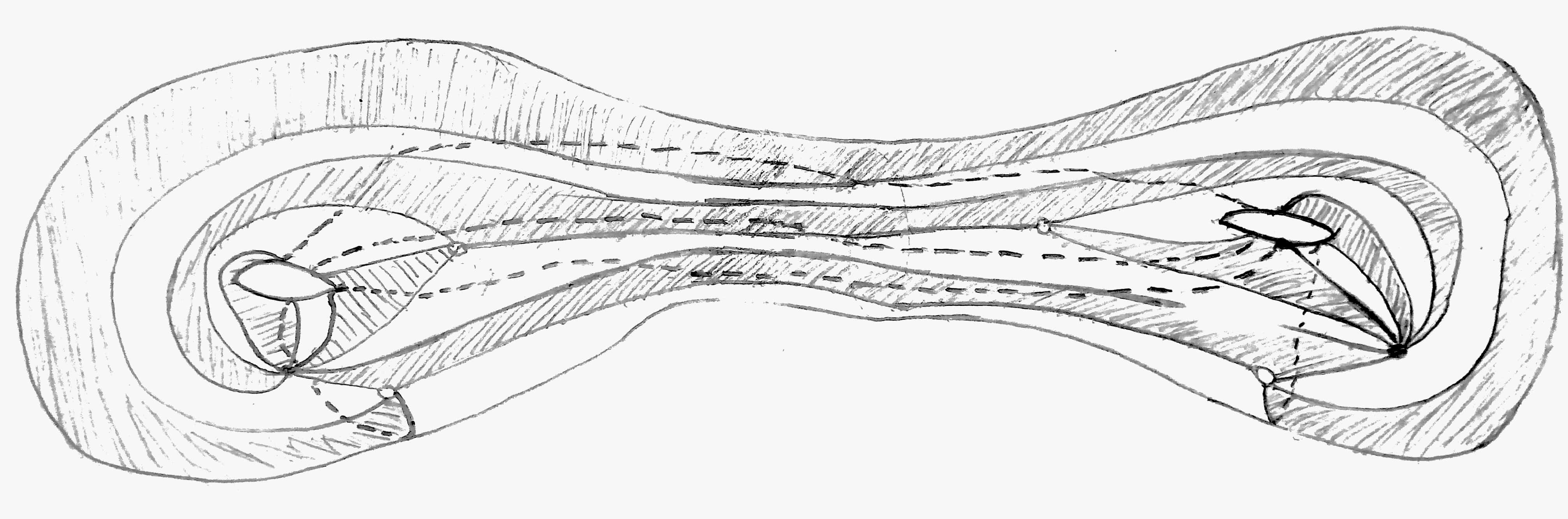}}}\hspace{1cm}
\subfloat[even graph non-globally balanced]
{{\includegraphics[width=4cm]{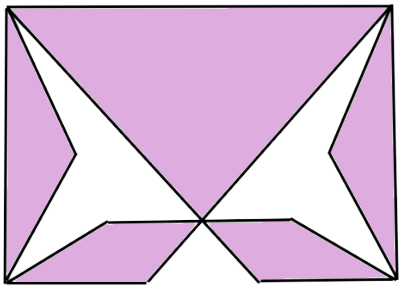}}}
    
    \vspace{-.25cm}
\caption{\hfill}
\label{par_n_gbal}
\end{center}
\end{figure}

Now observe that every graph that admits alternating coloring for its faces has even degrees for all of its vertices, i.e., it is an even graph (there are only two colorings). Yet, as we can see in the Figure $\ref{par_n_gbal}$, an even graphs are not necessarily globally balanced. 


\begin{defn}\label{deg-bg}
The degree of a  globally balanced graph is half of the number of its faces
 \emph{(}i.e., is the number of faces of the same color\emph{)}.
\end{defn}

\begin{lem}\label{mcornern}
The maximal number of corners on a globally balanced graph of degree $d$ and genus $g$ is $2(g+d - 1)$. 
\end{lem}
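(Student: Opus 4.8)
The plan is to derive the bound purely from the Euler characteristic of the underlying surface together with the parity constraint forced by the alternating face coloring. Write $V$, $E$ and $F$ for the numbers of vertices, edges and faces of the globally balanced graph $\Gamma\subset S_g$. By Definition \ref{fefn-bg} we have $F=2d$, and since $\Gamma$ is a cell graph on a closed orientable surface of genus $g$, Euler's formula gives
\begin{equation}
V-E+F=2-2g,
\end{equation}
whence $E=V+2d+2g-2$.

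Next I would exploit the coloring. Because $\Gamma$ admits an \textcolor{deeppink}{A}-\textcolor{blue}{B} alternating face coloring, the faces incident to any vertex must alternate in color as one turns around it; this forces every vertex to have \emph{even} valence, and in particular valence at least $2$ (a cell graph has no isolated vertices). By Definition \ref{corner-saddle} the corners are exactly the critical vertices, i.e.\ those of valence strictly greater than $2$, hence of valence at least $4$. Writing $m$ for the number of corners, the handshake identity then yields
\begin{equation}
2E=\sum_{v\in V(\Gamma)}\deg(v)\geq 4m+2(V-m)=2V+2m,
\end{equation}
so that $E\geq V+m$.

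Combining the two displays, $V+2d+2g-2=E\geq V+m$, and therefore
\begin{equation}
m\leq 2(g+d-1),
\end{equation}
which is the asserted upper bound. Equality holds precisely when the handshake inequality is tight, namely when $\Gamma$ has no vertex of valence $2$ and every corner has valence exactly $4$; in that case $m=V=2(g+d-1)$, in agreement with Euler's formula. Such graphs genuinely occur (for $g=0$ they are exactly Thurston's $4$-regular graphs of Theorem \ref{T}), so the bound is attained and the number $2(g+d-1)$ is indeed the maximum.

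The computation itself is routine, and one sees it is the combinatorial shadow of the Riemann--Hurwitz count $2(d+g-1)$ mentioned in the introduction. The only point that genuinely needs care is the justification of the two valence bounds — that the alternating coloring forces all valences to be even, and hence that every non-corner vertex has valence exactly $2$ — since these are what convert the handshake identity into the clean inequality $E\geq V+m$. Everything else is bookkeeping with the Euler characteristic.
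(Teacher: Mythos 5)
Your proof is correct and follows essentially the same route as the paper's own: Euler's formula on $S_g$ with $F=2d$, combined with the valence bound $\deg(v)\geq 4$ at corners (justified, as you note, by the evenness of valences forced by the alternating face coloring). If anything, your version is slightly more careful than the paper's, which writes $E\geq 2V$ as if every vertex were a corner, whereas your handshake count $2E\geq 4m+2(V-m)$ correctly accommodates possible $2$-valent vertices and still yields $m\leq 2(g+d-1)$.
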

\begin{proof}
From the \emph{Euler formula},\[2-2g=V(\Gamma)-E(\Gamma)+2d\]
Sice each corner has degree greater or equal to $4$, then $E(\Gamma)\geq\frac{4V(\Gamma)}{2}={2V(\Gamma)}.$

Therefore,
\[V(\Gamma)=2V(\Gamma)-V(\Gamma)\leq E(\Gamma)-V(\Gamma)=2g+2d-2\]
\end{proof}

 
\subsection{Construction of branched coverings from diagrams}\label{admg's}
Now we are going to introduce a class of embedded graphics that encodes a recipe to build a branched covering from them.
 
 \begin{defn}[enriched cell graph]\label{def:enrich-g}
 A \emph{enriched cell graph} is a cell graph with some vertices of valence $2$ \emph{(}we  can think of them as marked points in the graph\emph{)}.
\end{defn} 

\begin{defn}[vertex labeling]\label{def:labeling}
For a graph $\Gamma$, a surjective map $L:V({{\Gamma}}) \longrightarrow J$ from the vertex set to a finite set $J$ is called a vertex labeling of $\Gamma$ by $J$. For a vertex $v\in V(\Gamma)$ such that $L(v)=j$ we write $v_j .$
\end{defn} 

\begin{defn}[admissible vertex labeling]\label{adm-v-l}
Let ${\Gamma}$ be a degree $d>0$ enriched globally balanced graph 
with the same number $m\geq 2$ of vertices incident to each one of its faces \emph{(}here we are also considering vertices of valence $2$\emph{)}.  
A vertex labeling of $\Gamma$ by the cyclically ordered set $\{1<2<\cdots <m\}$ 
is called \emph{admissible labeling} if:
\begin{itemize}
\item[(1)]{The labels $1<2<\cdots <m$ appear cyclically around each face of $\Gamma$. Further, when a face boundary cycle is traversed in the labels' increasing order the preferred color is kept on the left;}
\item[(2)]{For each label $j\in\{1<2<\cdots <m\}$ it holds
\begin{eqnarray}
\displaystyle{\sum_{k=1}^{l_j} \dfrac{deg(v_{j}^{k})}{2}=d}
\end{eqnarray}where those $v_{j}^{k}$'s are the vertices of $\Gamma$ labelled with $j\in\{1<2<\cdots <m\}$, i.e., $\{v_{j}^{1}, v_{j}^{2},\cdots ,v_{j}^{l_j}\}=L^{-1}(j)$.}
\end{itemize}
\end{defn}

\begin{defn}[Admissible Graph]\label{adm-g}
An \emph{admissible graph} is a enriched globally balanced graph with an admissible labeling. The type of an \emph{admissible graph} is its type as a globally balanced graph. 

An \emph{admissible Graph} is \emph{generic} when its corners are all of degree $4$ and have pair-wise distinct labeling.
\end{defn}


As shown in the illustration below, we can have different admissible graphs but with the same underlying globally balanced graph.

\begin{figure}[H]
    \centering
    \subfloat[]
    {{\includegraphics[width=3.3cm,height=2cm]{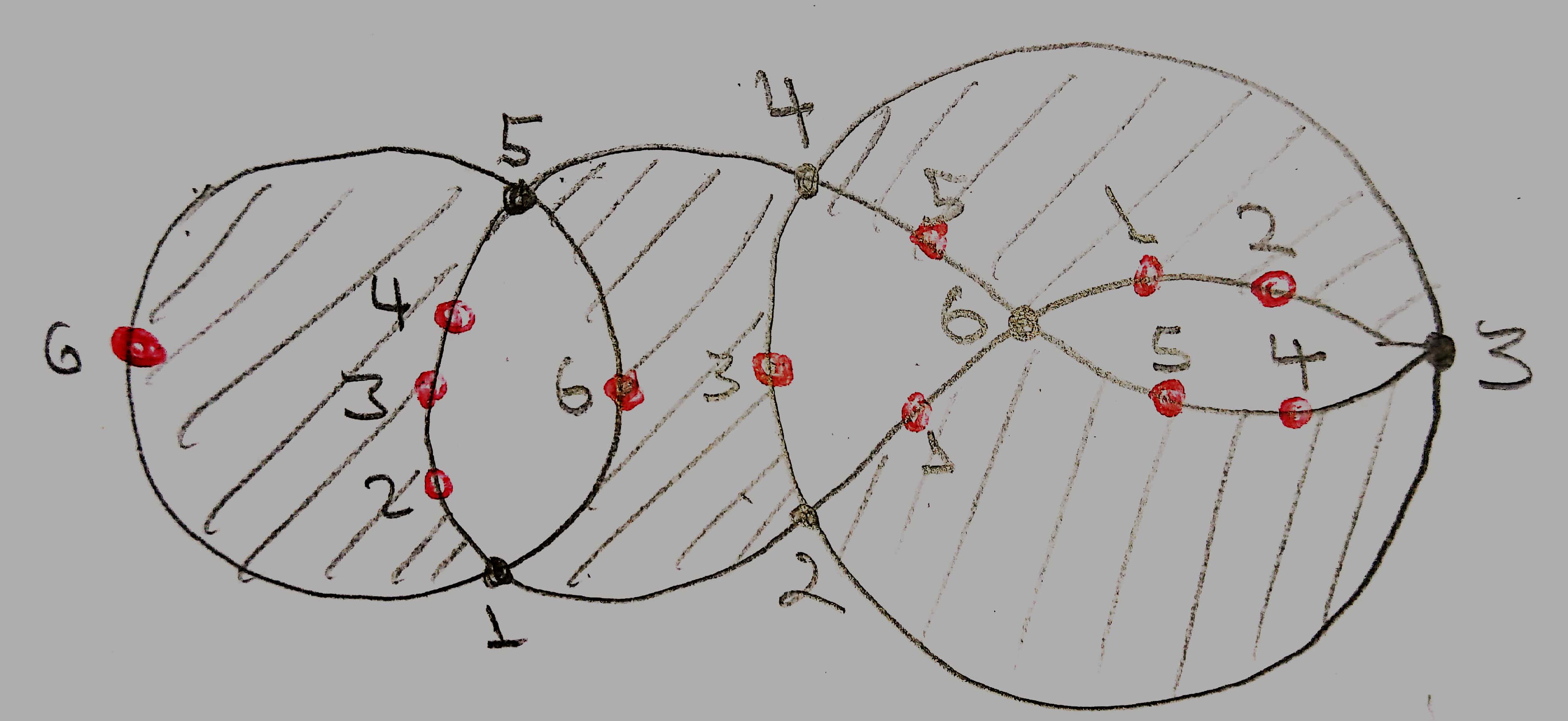}}}
	\quad
    \subfloat[]
    {{\includegraphics[width=3.4cm,height=2cm]{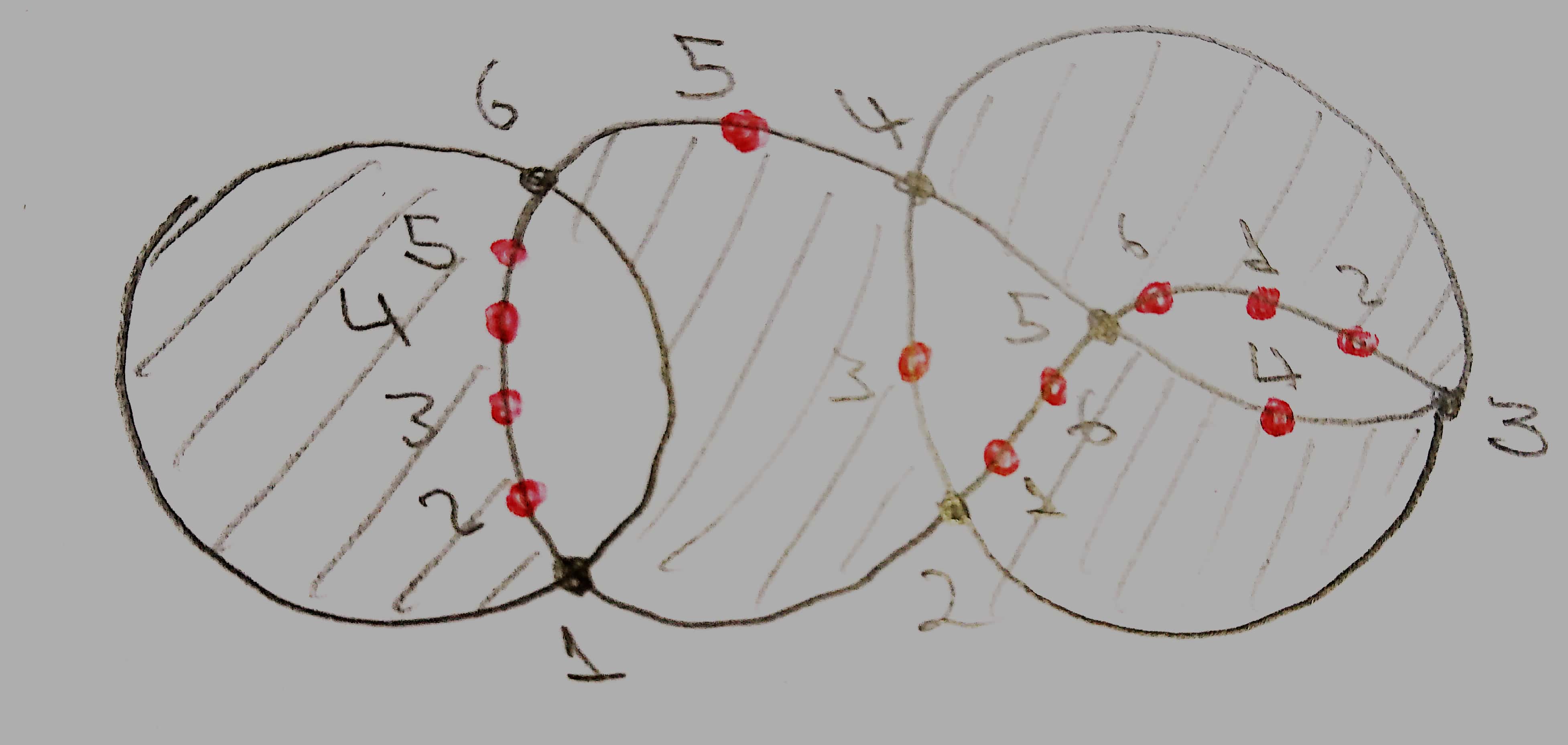}}}
    \quad
    \subfloat[]
    {{\includegraphics[width=2.8cm,height=2cm]{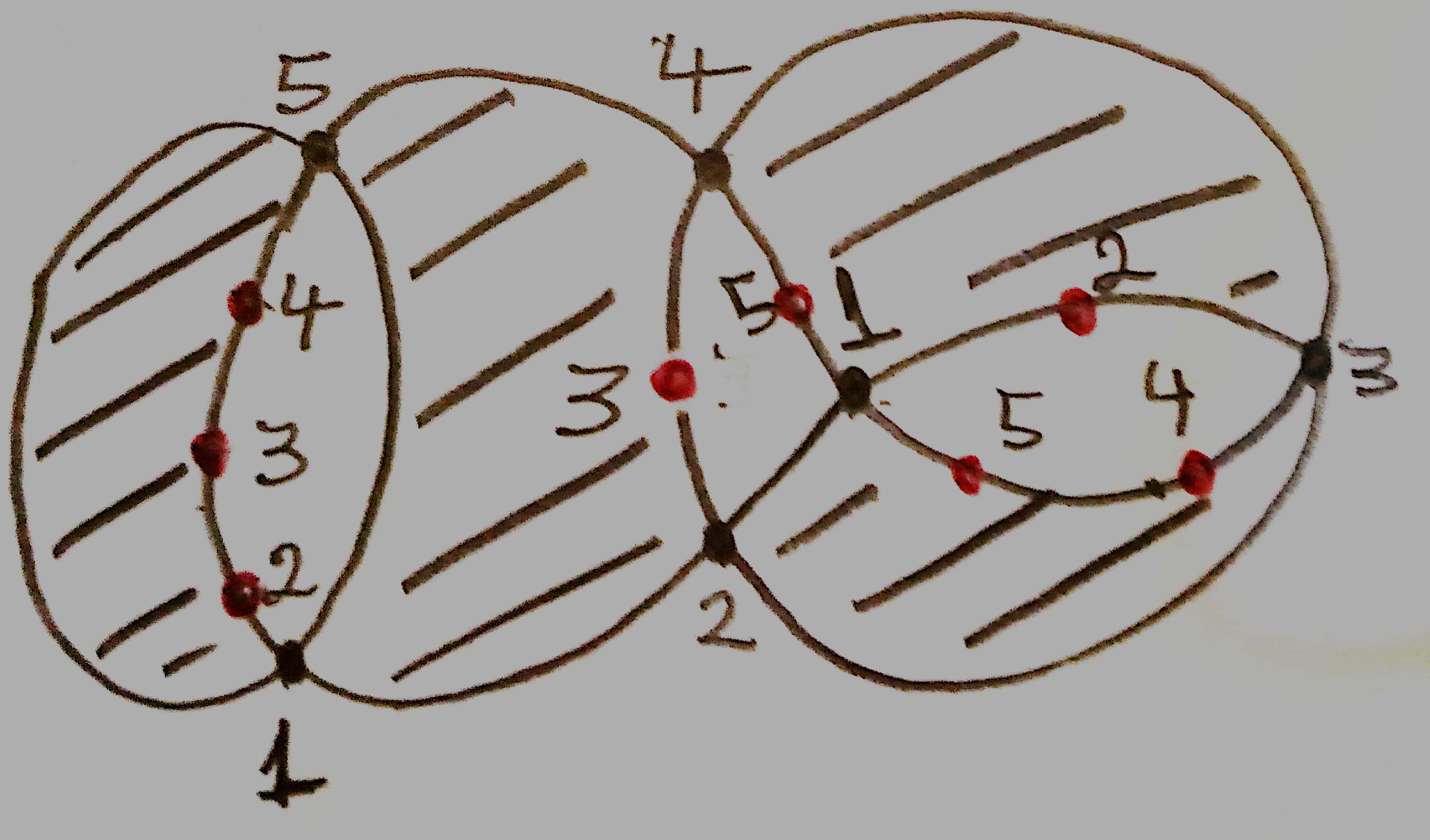}}}
        
    \vspace{-.25cm}
\caption{different admissible graphs from the same planar cell graph}
    \label{fig:h12pullb1}

\end{figure}

\subsubsection{Constructing a branched selfcovering of $\s^2$ from an admissible graph}\label{bcconstruction} 

To each face of $\Gamma$, resorting to Schoenflies’ Theorem, we attach an embedding in $\s^2$ of its closure, such that:
\begin{itemize}\item[]{\paragraph{Gluing conditions:}\begin{enumerate}\label{constr}
\item[(g.1)]{the image of the closure of each \textcolor{deeppink}{A} faces are equal to a closed $2$-cell, say $\Omega\subset\s^2$, and the image of the closure of each \textcolor{blue}{B} faces are equal to $\s^2-int(\Omega)$, where $int(\Omega)$ is the topological interior of $\Omega\subset\s^2$;}
\item[(g.2)]{vertices with the same label have the same image through the face embeddings;}
\item[(g.3)]{the embeddings of two faces with common saddle-connections are equal over those common saddle-connections.}
\end{enumerate}}\end{itemize}

Thus, we have constructed a finite degree continuous map $f$ by gluing together all those embeddings. Since every point in $\s^2$ has exactly d points above it, except those where corners are projected. These points are the branch points.

Let $\Sigma:=\partial \Omega$. 

By construction $f:|\Gamma|-\Gamma\longrightarrow\s^2-\Sigma$ is a local homeomorphism. Let $C\subset\s^2$ be the set of vertices of $\Gamma$ with degree strictly greater than $2$ and  $R:=f(C)\subset\s^2$. Due to $f^{-1}(\Sigma)=\Gamma$ and the coincidence of the embeddings over the saddle-connections, $f$ is a local homeomorphism in each point in $\Gamma - C$. 

Note also that the local degree of $f$ around each point $v^{k}_{j}$ in the fiber of $f$ over the critical value $w_j:=f(\{v_{j}^{1},\cdots ,v_{j}^{l_j}\})$ is equal to $\dfrac{deg(v_{j}^{k})}{2}$. Since each point in any punctured vicinity of $w_j$ possesses exactly that number of points above it around $v^{k}_{j}$ (compare with the figure $\ref{cbc1}$).
 
Therefore, $f$ is a branched covering with passport $\pi=(\pi_1,\cdots ,\pi_m)$ where 
\[\pi_j=\left(\dfrac{deg(v_{j}^{1})}{2}, \dfrac{deg(v_{j}^{2})}{2}, \cdots , \dfrac{deg(v_{j}^{l_j})}{2}\right)\quad\mbox{for each $j\in\{1, 2, \cdots, m\}.$}\]

\begin{figure}[H]
\begin{center}
\includegraphics[width=4cm]{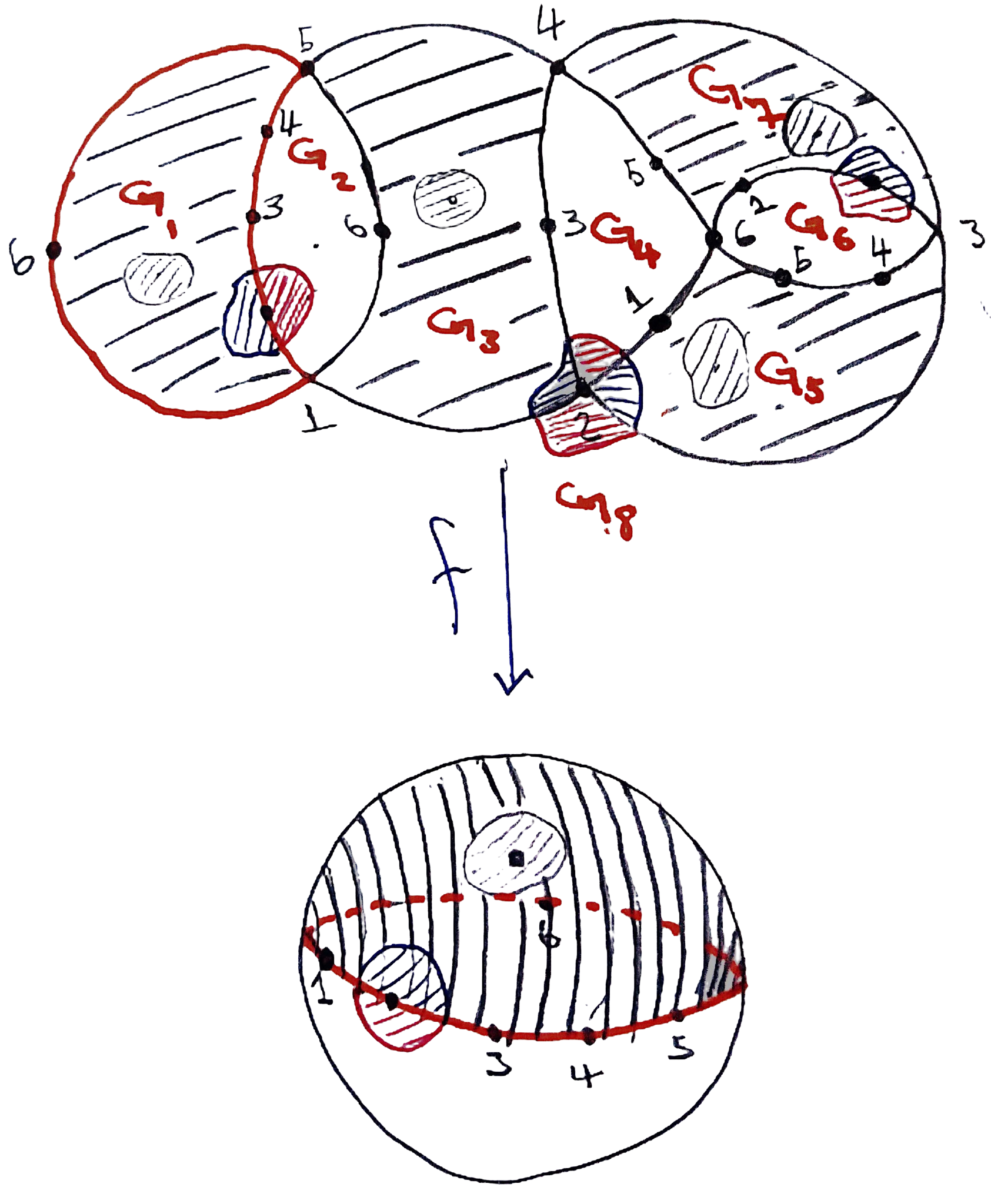}\
\caption{construction of a branched cover $\s^2\rightarrow\s^2$}
\label{cbc1}
\end{center}
\end{figure}

\begin{defn}
Given an admissible graphs $\Gamma$ a system of face embeddings for $\Gamma$ is a attachment to each face of $\Gamma$ of a embedding of its closure under the gluing conditions $\ref{constr}$. Given a enumeration $\{F_i\}_{i=1}^{i=2d}$ of $F(\Gamma)$ we denote a system of face embeddings for $\Gamma$ as $\amalg g_i$ where $g_i :\myov{F_i}\rightarrow \s^2$ is the embedding attached to $F_i$.
\end{defn} 

Hence, we have obtained the following theorem.

\begin{thm}\label{bcfromg02}
For each genus $g$ admissible graph with a system of face embedding $(\Gamma, \amalg f_i )$  there exist a branched covering $S_g\rightarrow{\s^2}$ for which $\Gamma$ is a pullback graph.
\end{thm}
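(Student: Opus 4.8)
The plan is to realize the map promised by the construction of Section~\ref{bcconstruction} and then verify, step by step, that it is a branched covering. Concretely, write $S_g=|\Gamma|=\bigcup_i \overline{F_i}$ as the finite union of the closures of the faces of $\Gamma$, and define $f\colon S_g\to\s^2$ by declaring $f|_{\overline{F_i}}:=f_i$ for each $i$. The first task is to check that this is well defined and continuous. Two closed faces meet only along shared boundary pieces, namely common saddle-connections and common vertices. By the gluing condition (g.3) of~\ref{constr} the embeddings $f_i$ agree on any saddle-connection shared by two faces, and by (g.2) every vertex is sent to a point determined solely by its label, so the $f_i$ also agree at shared vertices. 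Since $\{\overline{F_i}\}$ is a finite closed cover of $S_g$ and the pieces agree on all overlaps, the pasting lemma for closed covers yields a well-defined continuous map $f$.

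Next I would pin down the generic, unramified behaviour. Put $\Sigma:=\partial\Omega$. By (g.1) each of the $d$ $\textcolor{deeppink}{A}$-faces maps its interior homeomorphically onto $\mathrm{int}(\Omega)$ and each of the $d$ $\textcolor{blue}{B}$-faces maps its interior homeomorphically onto $\s^2\setminus\Omega$; hence every point of $\s^2\setminus\Sigma$ has exactly $d$ preimages, and $f$ is surjective and $d$-to-$1$ there. For a point $x\in\Gamma$ that is not a corner (an interior point of an edge, or a valence-$2$ vertex), $x$ separates locally one $\textcolor{deeppink}{A}$-face from one $\textcolor{blue}{B}$-face; these map to the two sides of $\Sigma$ and agree along the shared edge by (g.3), so $f$ is a local homeomorphism at $x$. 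Thus, writing $C$ for the set of corners, $f$ restricts to a local homeomorphism on $S_g\setminus C$, and $R:=f(C)=\{w_1,\dots,w_m\}$ is a finite set.

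The heart of the argument is the local model at a corner $v=v_j^k$ of valence $\deg(v)$. Around $v$ the incident edges and face-corners alternate in colour, so exactly $\deg(v)/2$ of the faces meeting at $v$ are $\textcolor{deeppink}{A}$-faces and $\deg(v)/2$ are $\textcolor{blue}{B}$-faces. Using condition~$(1)$ of the admissible labeling~\ref{adm-v-l} --- the labels increase cyclically around each face with the preferred colour kept on the left --- the chosen embeddings fit together coherently: traversing a small loop once around $v$, the images of the successive sectors wind once around $w_j=f(v)$ for each consecutive $\textcolor{deeppink}{A}$--$\textcolor{blue}{B}$ pair, so the loop wraps $\deg(v)/2$ times around $w_j$. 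In suitable local coordinates this says $f$ is $z\mapsto z^{\deg(v)/2}$, so $v$ is a ramification point of local degree $\deg(v)/2$.

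Combining these, $f$ is a degree-$d$ branched covering with branch values $R$, and by construction face interiors map off $\Sigma$ while $\Gamma$ maps onto $\Sigma$, so $f^{-1}(\Sigma)=\Gamma$; that is, $\Gamma$ is the pullback graph of $f$ with respect to $\Sigma$. Finally, condition~$(2)$ of~\ref{adm-v-l} asserts $\sum_{k}\deg(v_j^k)/2=d$ for each label $j$, which is exactly the statement that the local degrees over $w_j$ sum to the global degree, so the fibre data are consistent and $f$ has the passport $\pi_j=(\deg(v_j^1)/2,\dots,\deg(v_j^{l_j})/2)$. I expect step three to be the main obstacle: one must verify that the independently chosen face embeddings actually assemble into a single branched chart $z\mapsto z^{\deg(v)/2}$ around each corner rather than overlapping or folding back, and it is precisely the cyclic-label-with-fixed-colour requirement of the admissible labeling that forces the $\textcolor{deeppink}{A}$ and $\textcolor{blue}{B}$ sectors to be consistently oriented and to wind monotonically around the branch value.
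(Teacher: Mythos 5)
Your proposal is correct and follows essentially the same route as the paper, whose ``proof'' of Theorem~\ref{bcfromg02} is precisely the gluing construction of Section~\ref{bcconstruction}: paste the face embeddings along saddle-connections using (g.1)--(g.3), observe that $f$ is a local homeomorphism away from the corners, and read off the local degree $\deg(v_j^k)/2$ at each corner so that condition $(2)$ of the admissible labeling yields the passport. You supply somewhat more detail than the paper does (the pasting lemma and the monotone-winding verification at a corner --- where the key point is really that each embedding is a homeomorphism of the closed face onto a closed disk, sending the two boundary edges at $v$ to the two distinct germs of $\Sigma$ at $w_j$, which rules out folding), but the decomposition and the key steps coincide.
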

{Now, by the \emph{uniformization theorem} there exist a unique homeomorfism \[\mu:(\s^2, f(C)) \rightarrow(\CPu, \{0,1,\infty,v_1,\cdots , v_{m-3}\})\] for wich $\mu (\alpha)=1$, $\mu (\beta)=0$ and $\mu (\gamma)=\infty$ defining a complex structure over $(\s^2, f(C))$ for a choice $\{\alpha, \beta, \gamma\}\subset f(C)$. Since $f$ is a local homeomorphism on $\s^2 - C$, that complex structure can be pulled back by $f$ to a complex structure on $(|\Gamma|, C)$, say $\nu:=f^{\ast}\mu : (|\Gamma|, C)\rightarrow(X,\{c_1, c_2, \cdots, c_{k}\})$, for where $k=|V(\Gamma)| = C$. }

 Therefore, the map 
 \[F = F_{\nu\,\mu}:=\mu\circ f\circ\nu^{-1}:(X,\{c_1, c_2, \cdots, c_{2d-2}\})\rightarrow(\CPu,\{0,1,\infty, v_1,\cdots, v_{k}\})\] is a holomorphic function. 

Then, we have shown the following.

\begin{cor}\label{bcfromg2}
For each genus $g$ admissible graph with a system of face embeddings $(\Gamma, \amalg f_i )$ there exist a holomorphic branched covering $S_g\rightarrow{\CPu}$ \emph{(i.e.}, a meromorphic function\emph{)}, having $\Gamma$ as a pullback graph.
\end{cor}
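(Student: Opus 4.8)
The plan is to upgrade the purely topological branched covering produced by Theorem \ref{bcfromg02} to a holomorphic one by transporting a complex structure through the map. First I would invoke Theorem \ref{bcfromg02} to obtain a topological branched covering $f\colon S_g\to\s^2$ with $f^{-1}(\Sigma)=\Gamma$, where $\Sigma=\partial\Omega$ and $C=C(f)$ denotes its finite critical set. The target sphere carries a complex structure: fixing three distinguished points $\{\alpha,\beta,\gamma\}\subset f(C)$ (the three corners highlighted during the gluing construction of \ref{bcconstruction}), the uniformization theorem furnishes a homeomorphism $\mu\colon(\s^2,f(C))\to(\CPu,\{0,1,\infty,v_1,\dots,v_{m-3}\})$ that is biholomorphic and normalized by $\mu(\alpha)=1$, $\mu(\beta)=0$, $\mu(\gamma)=\infty$. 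This endows the target $\s^2$ with the structure of the Riemann sphere $\CPu$.

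Next I would pull this complex structure back along $f$ to define a complex structure $\nu$ on $X:=|\Gamma|=S_g$. On the complement $S_g\setminus C$ the map $f$ is a local homeomorphism, so composing local inverses of $f$ with the charts of $\CPu$ produces holomorphically compatible charts there; the transition maps are holomorphic precisely because the overlapping target charts are. The only delicate point is extending the atlas across each critical point $c=v_j^k\in C$. Here I would use the local normal form of a branched covering: near $c$ the map $f$ is topologically conjugate to $w\mapsto w^{e}$ with $e=\deg(v_j^k)/2$, as recorded in \ref{bcconstruction}. Taking the $e$-th root of the normalized target coordinate yields a chart centered at $c$ that is compatible with the punctured-disk charts already defined, so the puncture is filled in holomorphically. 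This is exactly the classical statement that a finite topological branched covering of a Riemann surface admits a unique complex structure on its source making the map holomorphic.

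With these structures in place I would form
\[
F \;=\; \mu\circ f\circ\nu^{-1}\colon \bigl(X,\{c_1,\dots,c_k\}\bigr)\longrightarrow\bigl(\CPu,\{0,1,\infty,v_1,\dots,v_{m-3}\}\bigr),
\]
and verify it is holomorphic: away from $C$ it is locally biholomorphic by construction of $\nu$, while near each $c$ it reads as $z\mapsto z^{e}$ in the chosen coordinates, which is holomorphic. Since $X$ is compact, $F$ is a holomorphic branched covering of $\CPu$, that is, a meromorphic function on the compact Riemann surface $X$. Finally, because passing to complex structures does not alter the underlying continuous maps or sets, we have $F^{-1}(\mu(\Sigma))=f^{-1}(\Sigma)=\Gamma$, with $\mu(\Sigma)$ a Jordan curve through the critical values of $F$; hence $\Gamma$ is a pullback graph of $F$, as required.

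I expect the main obstacle to be the chart-compatibility verification at the critical points in the second step, namely checking that the $e$-th-root coordinate genuinely extends the pulled-back atlas \emph{holomorphically} rather than merely continuously. Once the local model $w\mapsto w^{e}$ is in hand, this reduces to a removable-singularity argument for the bounded holomorphic transition functions on the punctured disk, but it is the crux that makes $f$ holomorphic instead of just a topological branched cover.
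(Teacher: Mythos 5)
Your proposal follows essentially the same route as the paper: invoke Theorem \ref{bcfromg02} for the topological branched covering, uniformize the target sphere via $\mu$ normalized at three points of $f(C)$, pull the complex structure back along $f$ to obtain $\nu$ on $|\Gamma|=S_g$, and conjugate to get the holomorphic map $F=\mu\circ f\circ\nu^{-1}$ with $\Gamma=F^{-1}(\mu(\Sigma))$. The only difference is that you spell out the extension of the pulled-back atlas across the critical points via the local model $w\mapsto w^{e}$ and a removable-singularity argument, a step the paper leaves implicit in the phrase ``that complex structure can be pulled back by $f$''; this is a welcome amplification, not a different argument.
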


Notice that each system of face embeddings for an admissible graph gives a branched covering. Thus, is there some relation between those maps?

\begin{prop}
Let $\mathcal{G}$ and $\mathcal{H}$ be two equivalent admissible graphs and let $\amalg g_i$ and $\amalg h_i$ be two system of face embeddings for them  respectively. Then, there are orientation preserving homeomorphisms 
$\phi:\s^2\rightarrow\s^2$ and $\psi:|\mathcal{G}|\rightarrow|\mathcal{H}|$ such that 

\begin{eqnarray}
\phi\circ{g}=h\circ \psi
\end{eqnarray} 
where $g$ and $h$ are the branched coverings 
contructed  from $(\mathcal{G}, \amalg g_i)$ and $(\mathcal{H}, \amalg h_i)$, respectively.
\end{prop}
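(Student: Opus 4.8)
The plan is to build the two maps face by face, exploiting that each branched covering restricts on a closed face precisely to the embedding attached to it, and that the equivalence of $\mathcal{G}$ and $\mathcal{H}$ furnishes a bijection of faces, edges and vertices that preserves the $\textcolor{deeppink}{A}$-$\textcolor{blue}{B}$ coloring and the admissible labeling (and hence the orientation, since the convention ``$\textcolor{deeppink}{A}$ on the left'' encodes it). Write $\Omega_g$, $\Sigma_g=\partial\Omega_g$ and $\Omega_h$, $\Sigma_h=\partial\Omega_h$ for the target $2$-cells and their bounding Jordan curves used in the two constructions $\ref{constr}$, and let $w_1^{g},\dots,w_m^{g}\in\Sigma_g$ and $w_1^{h},\dots,w_m^{h}\in\Sigma_h$ be the critical values indexed so that $w_j^{\bullet}$ is the common image of the vertices carrying the label $j$, these appearing cyclically in the label order along each curve.

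First I would construct $\phi$. Since the labels $1<2<\cdots<m$ occur in the same cyclic order along $\Sigma_g$ and along $\Sigma_h$, choose an orientation-preserving homeomorphism $\Sigma_g\to\Sigma_h$ sending $w_j^{g}\mapsto w_j^{h}$; it carries each boundary arc $\Sigma_g[w_j^{g},w_{j+1}^{g}]$ onto $\Sigma_h[w_j^{h},w_{j+1}^{h}]$ (indices mod $m$). By the Jordan--Schoenflies theorem, extend this boundary map to orientation-preserving homeomorphisms $\overline{\Omega_g}\to\overline{\Omega_h}$ and $\s^2-\Omega_g\to\s^2-\Omega_h$ that agree on $\Sigma_g$, and glue them into an orientation-preserving $\phi:\s^2\to\s^2$ with $\phi(\Omega_g)=\Omega_h$ and $\phi(w_j^{g})=w_j^{h}$.

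Next I would define $\psi$ face by face. For a face $F$ of $\mathcal{G}$ let $F'$ be the corresponding face of $\mathcal{H}$ under the equivalence (necessarily of the same color), and let $g_F,h_{F'}$ be the attached embeddings. Since $g_F$ and $h_{F'}$ are homeomorphisms onto $\Omega$ or its complement according to that common color, and $\phi$ maps $\Omega_g$ to $\Omega_h$ preserving the two sides, the composite $\psi|_{\overline{F}}:=h_{F'}^{-1}\circ\phi\circ g_F$ is a homeomorphism $\overline{F}\to\overline{F'}$. The asserted identity is then automatic on each closed face: because $h$ restricts to $h_{F'}$ on $\overline{F'}$, one has $h\circ\psi|_{\overline{F}}=h_{F'}\circ h_{F'}^{-1}\circ\phi\circ g_F=\phi\circ g_F=\phi\circ g|_{\overline{F}}$, so $\phi\circ g=h\circ\psi$ holds globally once $\psi$ is shown to be a well-defined map.

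The crux is checking that these face-wise definitions agree on the shared $1$-skeleton, so that $\psi$ is a single well-defined orientation-preserving homeomorphism. For an edge $e$ shared by faces $F_1,F_2$, the gluing condition (g.3) gives $g_{F_1}|_e=g_{F_2}|_e=g|_e$, and likewise $h|_{e'}$ is unambiguous on the corresponding edge $e'$; since $e$ joins a $j$-labeled to a $(j{+}1)$-labeled vertex, $g(e)=\Sigma_g[w_j^{g},w_{j+1}^{g}]$, and by label preservation $h(e')=\Sigma_h[w_j^{h},w_{j+1}^{h}]$, which $\phi$ was built to carry one onto the other. Hence both definitions of $\psi$ on $e$ collapse to $(h|_{e'})^{-1}\circ\phi\circ g|_e$, so by the pasting lemma $\psi:|\mathcal{G}|\to|\mathcal{H}|$ is a continuous bijection between compact Hausdorff spaces, thus a homeomorphism, orientation-preservation being inherited from $\phi$, $g_F$ and $h_{F'}$. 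I expect this arc-matching compatibility — arranging $\phi$ to send each arc of $\Sigma_g$ to the corresponding arc of $\Sigma_h$ so that $\psi$ glues across edges and vertices — to be the only real point; everything else is formal once the color- and label-preserving equivalence is in hand.
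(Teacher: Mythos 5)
Your proof follows essentially the same route as the paper's: obtain an orientation-preserving $\phi:\s^2\rightarrow\s^2$ from the Jordan--Schoenflies theorem carrying $\Sigma_g$ to $\Sigma_h$ compatibly with the labeled critical values and the two sides, then define $\psi$ face by face as $h_{F'}^{-1}\circ\phi\circ g_F$ so that $\phi\circ g=h\circ\psi$ holds on each closed face. The only difference is one of completeness: you make explicit two steps the paper leaves implicit, namely arranging $\phi$ to send each boundary arc of $\Sigma_g$ to the corresponding arc of $\Sigma_h$ in the cyclic label order, and verifying via the gluing condition (g.3) that the face-wise definitions of $\psi$ agree along the shared $1$-skeleton, so your write-up is a more carefully justified version of the same argument.
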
 
\begin{proof}
Let $\Sigma_g := g(\mathcal{G})$ and $\Sigma_h := h(\mathcal{H})$. From \emph{Schöenflies} theorem we get a homeomorphism $\phi:\s^2\rightarrow\s^2$ such that $\phi(\Sigma_g)=\Sigma_h$ and if $V(\mathcal{G})\subset \s^2-\Sigma_g$ and $V(\mathcal{H})\subset \s^2-\Sigma_h$ are the cell image of the \textcolor{deeppink}{A} faces of $\mathcal{G}$ by $G$ and $H$, then $\phi(V_G)=V_H$ (with the same for the images by $G$ and $H$ of the \textcolor{blue}{B} faces). Furthermore, $\phi \circ G|_{V(\mathcal{G})} = H \circ I|_{V(\mathcal{G})}$.

Therefore, the map $\psi(z):=H^{-1}|_{I(\myov{F})} \circ \phi \circ G:|\mathcal{G}|\rightarrow|\mathcal{G}|$ for each $z\in \overline{F}$ for each face $F$ of $\mathcal{G}$, determines a homeomorphism from $|\mathcal{G}|$ to $|\mathcal{H}|$ that satisfies
\begin{eqnarray}
\phi\circ{G}=H \circ \psi
\end{eqnarray}
\end{proof}

\begin{lem}\label{prop2}
Let $\mathcal{G}$ and $\mathcal{H}$ two equivalent cell graphs. 
Let $g:|\mathcal{G}|\rightarrow \s^2$ and $h:|\mathcal{H}|\rightarrow \s^2$ two continuous surjective maps that restricts to homeomorphisms over the topological closure of each face and such that 
\begin{eqnarray}
g(\overline{F})=h(I(\overline{F}))
\end{eqnarray}
for each face $F$ of $\mathcal{G}$, for $I$ being a homeomorphism atesting the equivalence of the cell graphs. 
Then there exist a homeomorphism $\mathcal{I}:|\mathcal{G}|\rightarrow |\mathcal{H}|$ (actually, $|\mathcal{G}| = |\mathcal{H}|$) such that
\begin{eqnarray}\label{tp2}
g=h \circ{} \mathcal{I}. 
\end{eqnarray}
\end{lem}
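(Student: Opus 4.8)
The plan is to build $\mathcal{I}$ one closed face at a time and then glue. For each face $F$ of $\mathcal{G}$ the restriction $g|_{\overline{F}}\colon \overline{F}\to g(\overline{F})$ is a homeomorphism by hypothesis, as is $h|_{I(\overline{F})}\colon I(\overline{F})\to h(I(\overline{F}))$; since $g(\overline{F})=h(I(\overline{F}))$ the composite
\[
\mathcal{I}_F:=\bigl(h|_{I(\overline{F})}\bigr)^{-1}\circ g|_{\overline{F}}\colon \overline{F}\longrightarrow I(\overline{F})
\]
is a homeomorphism, and by construction $h\circ\mathcal{I}_F=g|_{\overline{F}}$. Thus on each closed face the desired identity $g=h\circ\mathcal{I}$ already holds, and the entire problem is to show that the finitely many maps $\mathcal{I}_F$ fit together into one continuous map on $|\mathcal{G}|$.

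The closed faces cover $|\mathcal{G}|$, so by the pasting lemma it suffices to check that $\mathcal{I}_{F_1}$ and $\mathcal{I}_{F_2}$ agree on $\overline{F_1}\cap\overline{F_2}$ whenever two faces meet. This intersection is a union of shared saddle-connections and vertices, hence lies in the $1$-skeleton of $\mathcal{G}$, and I expect this agreement to be the main obstacle of the proof. Fix $x\in\overline{F_1}\cap\overline{F_2}$. By definition $\mathcal{I}_{F_i}(x)$ is the unique point $y_i\in I(\overline{F_i})$ with $h(y_i)=g(x)$, so the task is to prove $y_1=y_2$. Since $x$ lies on the $1$-skeleton, $g(x)$ lies on the Jordan curve $\Sigma:=g(\mathcal{G})=h(\mathcal{H})$ that is the common boundary of the two image regions; consequently each $y_i$ is forced to lie on the boundary cycle $\partial I(F_i)$ rather than in the interior of the face.

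To pin $y_i$ down on the correct edge I will use that $I$ is a cell-graph isomorphism: it carries the boundary cycle of $F_i$ to that of $I(F_i)$, edge to edge and vertex to vertex, preserving the cyclic oriented order, and it sends the shared cell $e:=\overline{F_1}\cap\overline{F_2}$ to $I(e)=I(\overline{F_1})\cap I(\overline{F_2})$. Because $g|_{\partial F_i}$ and $(h\circ I)|_{\partial F_i}$ are two orientation-compatible homeomorphisms of the boundary circle onto $\Sigma$ which agree on the vertices (corners of equal type having equal image, as encoded in the compatibility of $g$ and $h$ with the cell structure), they carry each boundary edge onto the \emph{same} sub-arc of $\Sigma$; in particular $g(e)=h(I(e))$. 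Hence $g(x)\in g(e)=h(I(e))$ has a unique $h$-preimage lying on $I(e)$, and that single point must equal both $y_1$ and $y_2$. This establishes $\mathcal{I}_{F_1}=\mathcal{I}_{F_2}$ on the overlap, so the pasting lemma yields a continuous $\mathcal{I}\colon|\mathcal{G}|\to|\mathcal{H}|$ with $g=h\circ\mathcal{I}$. Running the identical construction with the roles of $g,h$ and of $I,I^{-1}$ interchanged produces a continuous two-sided inverse, so $\mathcal{I}$ is a homeomorphism. The genuinely substantive step is the boundary-arc matching $g(e)=h(I(e))$ just described, which rests on the coincidence of vertex images; everything else is the routine bookkeeping of the pasting lemma.
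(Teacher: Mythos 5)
Your construction is exactly the paper's: the paper's entire proof consists of the face-wise definition $\mathcal{I}:=h^{-1}|_{h(I(\overline{F}))}\circ g$ on each closed face, followed by the bare assertion that this is a homeomorphism satisfying $g=h\circ\mathcal{I}$; it never checks that the face-wise definitions agree along shared edges and vertices. So you have correctly isolated the one step the paper glosses over, and your packaging (pasting lemma on the closed faces, symmetric construction of a two-sided inverse) is the right one.

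However, your verification of the overlap agreement uses a hypothesis the lemma does not contain. You assume that $g$ and $h\circ I$ agree at the vertices (``corners of equal type having equal image'') and deduce from this that $g(e)=h(I(e))$ for each shared edge $e$. The stated hypothesis is only the set equality $g(\overline{F})=h(I(\overline{F}))$, and that does not force vertex agreement or the edge matching. Concretely, let $\mathcal{G}=\mathcal{H}$ be the four-quadrant cell graph on the sphere $\CC$ (vertices $0$ and $\infty$, edges the four half-axes, faces the open quadrants), take $I=\mathrm{id}$, $h(z)=z^2$, and $g=\rho\circ h$ with $\rho(w)=-1/w$. Since $\rho$ preserves each closed half-plane, $g(\overline{F})=h(\overline{F})$ for every face, so the hypotheses hold; yet $g(0)=\infty\neq 0=h(0)$, and $g$ carries the positive real axis onto the negative one, so $g(e)\neq h(I(e))$. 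Moreover the face-wise map is genuinely not well defined there: for real $x>0$ one has $g(x)=-1/x^2$, whose $h$-preimage on the boundary of the closed first quadrant is $i/x$, while on the closed fourth quadrant it is $-i/x$. (The lemma's conclusion happens to survive in this example, but only via $\mathcal{I}(z)=i/z$, which realizes a \emph{different} face correspondence than the given $I$ --- so no argument that keeps the prescribed $I$ face-by-face can succeed under the literal hypotheses.) Hence, as a proof of the statement as written, your argument has a gap at precisely the step you flagged as substantive; what rescues it in the paper's actual application is that the gluing conditions (g.2)--(g.3) of $\ref{constr}$ --- vertices with equal labels have equal images, and the embeddings agree along common saddle-connections --- supply exactly the vertex and edge agreement your argument invokes. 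With that agreement added as a hypothesis, your proof is complete, and it is more careful than the one in the paper, which omits the well-definedness check altogether.
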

\begin{proof}
Via the homeomorphism $I$ and the property $(\ref{prop2})$ define
\begin{eqnarray}
\mathcal{I}(z):= h^{-1}|_{h(I(\overline{F}))}\circ{}f(z)
\end{eqnarray} 

for each $z\in \overline{F}$ for each face $F$ of $\mathcal{G}$. 

Thus, $\mathcal{I}$ is a homeomorphism due the hypotesis that $f$ and $h$ restricts to homeomorphisms over each closed face of $\mathcal{G}$ and $\mathcal{H}$ and by construction it satisfies  $(\ref{tp2})$.
\end{proof}

\begin{prop}
Given two equivalent admissible graphs, say $\mathcal{G}$ and $\mathcal{H}$, if there exist an orientation preserving homeomorpism $\varphi:\s^2\rightarrow\s^2$ such that 

\begin{eqnarray}
h(I(\overline{F}))=\phi(g(\overline{F}))
\end{eqnarray} 

for each face of $\mathcal{G}$, then the meromorphic functions produced from it as in the preceding construction $\ref{bcconstruction}$ are equivalents if the face embeddings are isotopic relative to the critical value set.
\end{prop}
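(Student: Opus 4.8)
The plan is to separate the argument into a purely topological step and a conformal one. First I would use Lemma~\ref{prop2} to manufacture a topological conjugacy. Setting $g' := \phi\circ g$, the hypothesis $h(I(\overline{F})) = \phi(g(\overline{F}))$ reads $g'(\overline{F}) = h(I(\overline{F}))$ for every face $F$ of $\mathcal{G}$; since $\phi$ is a homeomorphism, $g'$ still restricts to a homeomorphism on each closed face, so Lemma~\ref{prop2} (with $g'$ in the role of its map $g$) supplies a homeomorphism $\mathfrak{I}\colon|\mathcal{G}|\rightarrow|\mathcal{H}|$ with $g' = h\circ\mathfrak{I}$, that is
\[
\phi\circ g = h\circ\mathfrak{I}.
\]
By construction $\mathfrak{I}$ respects the corner labelling coming from the admissible structure, so in particular it carries the distinguished triple $\{\alpha,\beta,\gamma\}$ of $\mathcal{G}$ fixed in the normalization of construction~\ref{bcconstruction} to the corresponding corners of $\mathcal{H}$.

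Next I would bring in the meromorphic functions $G := \mu_g\circ g\circ\nu_g^{-1}$ and $H := \mu_h\circ h\circ\nu_h^{-1}$ from the construction, where $\mu_g,\mu_h$ and $\nu_g,\nu_h$ are the normalized uniformizing maps sending the distinguished corners and their images to $\{0,1,\infty\}$. The whole proof then reduces to showing that the two a priori merely topological self-maps of $\CPu$,
\[
\Psi := \mu_h\circ\phi\circ\mu_g^{-1}, \qquad \Phi := \nu_h\circ\mathfrak{I}\circ\nu_g^{-1},
\]
are conformal. Granting this, each is an automorphism of $\CPu$ fixing $0,1,\infty$ and is therefore the identity, and feeding $\Phi=\mathrm{id}$, then $\phi\circ g=h\circ\mathfrak{I}$, then $\Psi=\mathrm{id}$ into
\[
H = \mu_h\circ h\circ\nu_h^{-1} = \mu_h\circ h\circ\mathfrak{I}\circ\nu_g^{-1} = \mu_h\circ\phi\circ g\circ\nu_g^{-1} = \mu_g\circ g\circ\nu_g^{-1} = G
\]
gives $G=H$; dropping the codomain normalization (while keeping the domain fixed, as in the reality application where the critical points are prescribed) turns this equality into $H = \Psi\circ G$ with $\Psi\in\mathrm{Aut}(\CPu)$, which is exactly the claimed equivalence up to post-composition.

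The hard part will be the conformality of $\Psi$, and this is precisely where the hypothesis that the face embeddings are isotopic relative to the critical value set enters. My intention is to use the isotopy to identify the two marked configurations---the Jordan curves $g(\mathcal{G})$, $h(\mathcal{H})$ together with their critical values $g(C)$, $h(C')$---as representatives of a single isotopy class, so that the complex structures $\mu_g^{*}J_{\CPu}$ and $\mu_h^{*}J_{\CPu}$ that uniformization attaches to the codomain sphere are intertwined by $\phi$, i.e. $\phi^{*}(\mu_h^{*}J_{\CPu}) = \mu_g^{*}J_{\CPu}$; this is the statement that $\Psi$ is holomorphic. Once $\Psi$ is conformal, the conformality of $\Phi$ is automatic: writing $J_g := \mu_g^{*}J_{\CPu}$ and $J_h := \mu_h^{*}J_{\CPu}$, the domain structures are $g^{*}J_g$ and $h^{*}J_h$, so $\phi\circ g = h\circ\mathfrak{I}$ together with $\phi^{*}J_h = J_g$ forces $\mathfrak{I}^{*}(h^{*}J_h) = g^{*}J_g$, making $\mathfrak{I}$, and hence $\Phi$, holomorphic. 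The one delicate point I would check carefully is that the isotopy can be chosen to fix the normalizing triple throughout, so that $\Psi$ and $\Phi$ genuinely fix $0,1,\infty$; this is what converts the topological matching supplied by $\phi$ into the conformal coincidence that yields the equivalence.
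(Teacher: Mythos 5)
Your proposal follows essentially the same route as the paper's proof: Lemma~\ref{prop2} supplies the topological conjugacy $\phi\circ g = h\circ\mathfrak{I}$, the normalized uniformizing maps reduce everything to the two self-maps $\Psi=\mu_h\circ\phi\circ\mu_g^{-1}$ and $\Phi=\nu_h\circ\mathfrak{I}\circ\nu_g^{-1}$ of $\CPu$, and the fact that a conformal automorphism fixing $0,1,\infty$ is the identity yields the same chain of equalities $H=G$. The only difference is one of explicitness rather than of method: where the paper simply asserts that these maps ``are conformal automorphisms of $\CC$'' fixing three points, you isolate their conformality as the delicate step and sketch its derivation from the isotopy hypothesis via pulled-back complex structures, which is a refinement of, not a departure from, the paper's argument.
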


\begin{proof}

First, what we mean by saying that the face embeddings are isotopic relative the critical value set is that the two Jordan curves image of the boundary of some face (therefore, of any one) of each graph from the face  embeddings are isotopic relative to the critical value set. 


The isotopy hypothesis guarantees the existence of a homeomorphism $\phi:\s^2\rightarrow\s^2$ compatible with the face embeddings, i.e.,

\begin{eqnarray}
h(I(\overline{F}))=\phi(g(\overline{F}))
\end{eqnarray} for each face of $\mathcal{G}$.

So, Proposition $\ref{prop2}$ gives a homeomorphism $\mathfrak{I}$ such that
\begin{eqnarray}
h\circ{}\mathfrak{I}=\phi\circ{}g
\end{eqnarray}

Let $G:=\mu_g\circ{}g\circ{}\nu_{g}^{-1}$ and $H:=\mu_h\circ{}h\circ{}\nu_{h}^{-1}$ be those two rational functions as anounced, where $\nu_g, \mu_g, \nu_h , \mu_h$ the uniformizing maps of the domain and codomain of the topological branched coverings $g$ and $h$ constructed from $\mathcal{G}$ and $\mathcal{H}$ (as in $\ref{bcconstruction}$). 

Since $\mathfrak{I}$ and $\phi$ fix the distinguished corners $\alpha$,$\beta$, and $\gamma$ (the normalization), and $\nu_g(0)=\nu_h(0)=\alpha$,$\nu_g(1)=\nu_h(1)=\beta$, $\nu_g(\infty)=\nu_h(\infty)=\gamma$, $\mu_g(\alpha)=\mu_h(\alpha)=0$, $\mu_g(\beta)=\mu_h(\beta)=1$ and $\mu_g(\gamma)=\mu_h(\gamma)=\infty$, follows that $\nu_h\circ{}\mathfrak{I}\circ{}\nu_g^{-1}=Id_{\CC}$ and $\mu_h\circ{}\phi\circ{}\mu_g^{-1}=Id_{\CC}=Id_{\CC}$ as they are conformal automorphisms of $\CC$ that fixes three points.

Therefore,
\begin{eqnarray}
H &=& \mu_{h}\circ{}h\circ{}\nu_{h}^{-1}\\ \nonumber
&=& \mu_{h}\circ{}h\circ{}\mathfrak{I}\circ{}\nu_{g}^{-1}\\ \nonumber
&=& \mu_{h}\circ{}\phi\circ{}g \circ{}\nu_{g}^{-1}\\ \nonumber
&=& \mu_{g}\circ{}g\circ \nu_{g}^{-1}\\ \nonumber
&=& G\\ \nonumber
\end{eqnarray}

\begin{center}
\begin{tikzcd}[row sep=large, column sep = large]
\cc \arrow{r}{\nu_g}\arrow{d}{Id_{\cc}} & (\s^2 ,\mathcal{G})  \arrow[dashrightarrow, xshift=-1.4ex]{d}{\mathfrak{I}}  \arrow{r}{g} \arrow[xshift=1.4ex]{d}{I} &(\s^2 ,g(C))\arrow{r}{\mu_g} \arrow{d}{\phi} &\cc \arrow{d}{Id_{\cc}}  \\
\cc \arrow{r}{\nu_h} & (\s^2 ,\mathcal{H})\arrow{r}{h}&(\s^2 ,h(C))\arrow{r}{\mu_h}&\cc
\end{tikzcd}
\end{center}

\end{proof}


\subsection{A special case: real rational functions from diagrams}\label{realcase?}

Now, let's look at a special class of admissible graphs.

In the next section, we will achieve a full generalization of a theorem by Thurston proved firstly for generic branched self-coverings of the $2$-sphere.
\section{General version of a theorem by Thurston}\label{sect-thurstonthm}



An important featuring underlying the definition of the Thurston's Local Balance Condition is that on the 2-sphere every simple closed curve divide the 2-sphere in two 2-cells. That is the content of the \emph{Jordan Curve Theorem}. That fact does not hold for positive genus surfaces. Therefore, if we want to use Thurston’s concept more broadly, we must address this flaw.  


\begin{defn}
A \emph{simple closed curve} $\gamma$ into a surface $S$ 
 is \emph{separating} if $S-\gamma$ has two components. Otherwise, $\gamma$ is \emph{non separating}.
\end{defn}

\begin{defn}[cobordant multcycle]\label{sub-surfs}
Let $\Gamma$ be a cell graph 
that admits an \textcolor{deeppink}{A}-\textcolor{blue}{B} alternating face coloring. We say that a collection of disjoint cycles $L:=\{\gamma_1, \cdots, \gamma_k\}$ of $\Gamma$ are a \emph{cobordant multicycle of} $\Gamma$ if:
\begin{itemize}
\item[i.]{$S-\{\gamma_1, \cdots , \gamma_k\}$ is disconnected;}
\item[ii.]{there is a connected component $R$ of $S-\{\gamma_1, \cdots , \gamma_k\}$ such that $\partial R = \bigsqcup_{n=1}^{k}\gamma_n$.}
\end{itemize}
We call $L$ by \emph{positive cobordant multicycle} of $\Gamma$ when each cycle $\gamma_n \in L$ is positive. And, R is the interior of L.
\end{defn} 

Now we are on time to present the \emph{Local Balance Condition}.


\begin{defn}[Locally Balanced Graph]\label{localbalance}
Let $\Gamma$ cell graph.
 with an alternating \textcolor{deeppink}{A}-\textcolor{blue}{B} face coloring.
We say that $\Gamma$ is \emph{locally balanced} if for any \textcolor{deeppink}{A}-\textcolor{blue}{B} alternating face coloring and for any \emph{positive cobordant multicycle of} $\Gamma$ the number of $\textcolor{deeppink}{A}$ faces inside it (i.e, on the interior of that multicycle) is strictly greater than the number of $\textcolor{blue}{B}$ faces.
\end{defn}

Thurston established the following definition for $4$-regular planar cell graphs. 

\begin{defn}[local balance condition from Thurston \cite{STL:15}]\label{thurstonlb}
A $4$-regular planar cell graphs $\Gamma$ is \emph{locally balanced} if for any \textcolor{deeppink}{A}-\textcolor{blue}{B} alternating face coloring and for every positive cycle of $\Gamma$ the number of $\textcolor{deeppink}{A}$ faces inside it, is strictly greater than the number of $\textcolor{blue}{B}$ faces there.
\end{defn}


Although in the planar situation Definition $\ref{localbalance}$ it seems to be more restrictive than the one given by Thurston, they are actualy equivalent.


\begin{prop}[meaningfulness of Definiton $\ref{localbalance}$]
For \emph{planar} globally balanced graphs those two definitions of local balancedness are equivalents.
\end{prop}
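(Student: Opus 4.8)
The plan is to prove the two implications separately, using throughout that on $\s^2$ every simple closed curve is separating (the Jordan Curve Theorem). For the direction Definition \ref{localbalance} $\Rightarrow$ Definition \ref{thurstonlb}, I would observe that any positive cycle $\gamma$ is separating, so the singleton $L=\{\gamma\}$ is already a positive cobordant multicycle: $\s^2-\gamma$ is disconnected and the \textcolor{deeppink}{A}-adjacent component $R$ satisfies $\partial R=\gamma$. Hence the inequality that Definition \ref{localbalance} demands for $L$ is verbatim the inequality that Definition \ref{thurstonlb} demands for $\gamma$, and so local balance in the author's sense restricts to Thurston's.

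The converse Definition \ref{thurstonlb} $\Rightarrow$ Definition \ref{localbalance} rests on a structural observation. Let $L=\{\gamma_1,\dots,\gamma_k\}$ be a positive cobordant multicycle with interior $R$, so that $\partial R=\bigsqcup_{n=1}^{k}\gamma_n$. I would first record the standard fact that $k$ disjoint simple closed curves cut $\s^2$ into exactly $k+1$ regions, and that the region-adjacency graph (one node per region, one edge per curve) is a tree on $k+1$ nodes with $k$ edges. Since $R$ meets all $k$ curves, it has degree $k$ in this tree, which forces the tree to be the star $K_{1,k}$: the remaining $k$ regions $D_1,\dots,D_k$ are leaves, each a Jordan disk bounded by the single curve $\gamma_n$, and $R=\s^2\setminus\bigsqcup_{n}\overline{D_n}$.

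With this decomposition the counting is immediate. Writing $a(U)$ and $b(U)$ for the number of \textcolor{deeppink}{A}- and \textcolor{blue}{B}-faces contained in a region $U$, global balance gives $a(\s^2)=b(\s^2)=d$, and since $L$ lies in the $1$-skeleton each face sits in a single region, so the colors partition: $a(R)=d-\sum_n a(D_n)$ and $b(R)=d-\sum_n b(D_n)$. Because $R$ lies on the \textcolor{deeppink}{A}-side of each positive cycle $\gamma_n$, the single-cycle interior of $\gamma_n$ is $\s^2\setminus\overline{D_n}$; applying Thurston's condition to $\gamma_n$ yields $a(\s^2\setminus\overline{D_n})>b(\s^2\setminus\overline{D_n})$, that is $b(D_n)>a(D_n)$. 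Summing over $n$ gives $\sum_n b(D_n)>\sum_n a(D_n)$, whence $a(R)>b(R)$, which is exactly the inequality required by Definition \ref{localbalance}.

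I expect the main obstacle to be the topological reduction of the second paragraph: verifying that the interior of a cobordant multicycle on the sphere is precisely the complement of $k$ disjoint Jordan disks, equivalently that the region-adjacency tree is a star. Once this is in place, the equivalence collapses to the one-line summation above, with the global-balance hypothesis doing the essential work of converting the family of strict single-cycle inequalities $b(D_n)>a(D_n)$ into the single strict inequality $a(R)>b(R)$ for the whole multicycle. Notably, the argument uses neither $4$-regularity nor any genus-zero feature beyond separability, which is what makes Definition \ref{localbalance} the correct generalization.
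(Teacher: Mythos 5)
Your proof is correct and follows essentially the same route as the paper: the forward direction is the same one-line Jordan Curve Theorem observation, and your converse is the paper's argument exactly --- decompose $\s^2 - R$ into components each bounded by a single cycle of the multicycle, apply Thurston's condition to each such cycle on the side containing $R$, and sum using global balance to get $a(R) > b(R)$. Your region-adjacency-tree/star argument merely fills in the detail behind the paper's one-line assertion that, since $R$ is connected, each complementary component has exactly one boundary cycle.
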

\begin{proof}
Thanks to the \emph{Jordan Curve Theorem} is immediate that Definition $\ref{localbalance}$ implies the Definition $\ref{thurstonlb}$.

So, let's prove the reverse implication. That is, we will guarantee that if a planar balanced graph that satifies the Definition $\ref{thurstonlb}$ then it also enjoys the Definition $\ref{localbalance}$. 

Let $\Gamma$ be a planar globally balanced graph with an alternating \textcolor{deeppink}{A}-\textcolor{blue}{B} face coloring and $\Lambda$ be a \emph{cobordant positive multicycle of} $\Gamma$ with interior $R$.

Let $Y$ be a connected component of $\s^2 -R$. Since $R$ is connected the boundary of $Y$ has only one component $\gamma\in L$. 

Thus $\gamma$ encloses the complement of $Y$ leaving \textcolor{deeppink}{A} faces on its left side.

Hence, from the \emph{local balance} condition we conclude that are more pink faces than blue ones outside $Y$.

Let $Y_1 , Y_2, \cdots, Y_n$ be the components of $\s^2 - R$, and $\textcolor{deeppink}{a_{k}}$ and $\textcolor{blue}{b_{k}}$ the number of \textcolor{deeppink}{A} faces into $Y_k$ and the number of \textcolor{blue}{B} faces into $Y_k$, respectively. $\textcolor{deeppink}{a_{R}}$ and $\textcolor{blue}{b_{R}}$ are the numbers of \textcolor{deeppink}{A} faces and \textcolor{blue}{B} faces into $R$.

Hence, from the above argumentation
\begin{eqnarray}
\textcolor{deeppink}{a_{k}}<\textcolor{blue}{b_{k}}
\end{eqnarray}for each $k=1,2, \cdots , n$

And, since,
\begin{eqnarray}
\textcolor{deeppink}{a_R} + \sum_{k=1}^{n}\textcolor{deeppink}{a_{k}}=\textcolor{blue}{b_R} + \sum_{k=1}^{n}\textcolor{blue}{b_{k}}
\end{eqnarray}

Then,

\begin{eqnarray}
\textcolor{deeppink}{a_R} >\textcolor{blue}{b_R}
\end{eqnarray}

\end{proof}

\begin{defn}[Balanced Graph]\label{balance}
A \emph{balanced graph} is a cell graph on an oriented compact surface that is both globally and locally balanced. The \emph{type of a balanced} graph is its type as a globally balanced graph.
\end{defn}
\begin{figure}[H]
\begin{center}
\subfloat[balanced graph of type (0,4,6)]
{{\includegraphics[width=4cm,height=4cm]{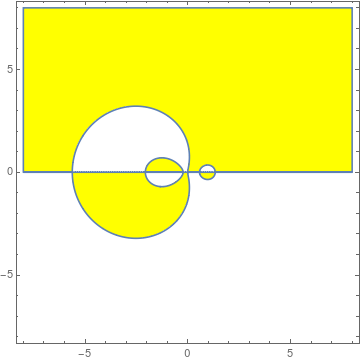}}}\qquad
\subfloat[balanced graph of type (1,4,4)]
{{\includegraphics[width=4cm,height=4cm]{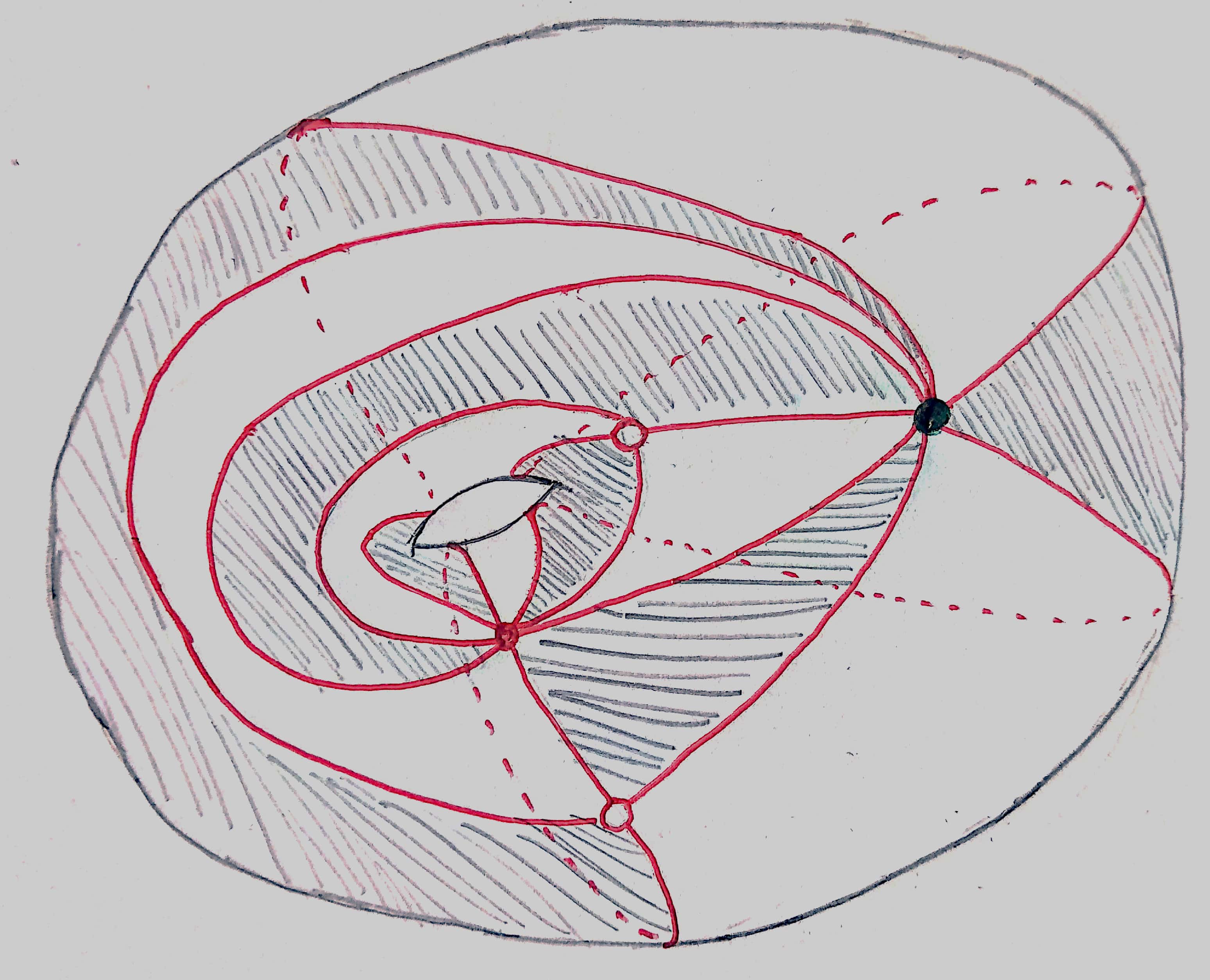}}}
\label{bg-000}
\end{center}
\end{figure}

\begin{mnthm}{A}[\textcolor{uibred}{General version of a theorem by Thurston}]\label{teo-a}
An cellularly embedded graph $\Gamma$ into a genus $g$ oriented compact surface $S_g$ is a pullback graph if and only if it is a balanced graph.   
\end{mnthm}

\begin{proof}
We will follow closely the initial proof given by Thurston\cite{STL:15}.


The crucial insight 
for to promote a pullback graph to a balanced graph consists of recognizing its combinatorial structure as stemming from a perfect matching on an adjacent bipartite graph.

Thus, let $f:X\rightarrow\s^2$ be a degree $d$ branched cover with $m$ critical values. Consider $\Sigma$, a post-critical curve for $f$, and let ${\Gamma}$ be the corresponding pullback graph. 
Let $A\subset \s^2$ and $B\subset \s^2$  be the two connected components of $\s^2 - \Sigma$.

For each $2$-valent vertex we mark a dot into those two face of ${\Gamma}$ incident to it. 
 Hence, each $2$-valent vertex will have two marked dots corresponding to it into each one of its two neighboring faces. 
Since the boundary of each face contains exactly $m$ vertices on its boundary, after we did that, each face of $\Gamma$ will contain $m-e$ dots, where $e$ is the number of corners around it. 
Each dot corresponds 
to a different critical value of $f$.


Consider from $\Gamma$ the graph $G:=G(\Gamma):=(V=DA\sqcup DB, E)$ where $DX$ is the set of dots from those faces whose image by $f$ is $X\in\{A, B\}$ and $E$ is the adjacency relation of $\Gamma$. 

Then, that spliting procedure of the $2$-valent vertices described above provides a \emph{perfect matching} on the graph $G$.

On the other hand, if we have a balanced graph $\Gamma$ we can also construct that adjacent graph $G$ inserting $m-e_F$ dots into each face $F$ of $\Gamma$ being $m$ the number of corners of $\Gamma$ and $e_f$ the number of corners incident to $F$. The vertex set of $G$ in this case is partitioned into two subsets with respect to the face coloring of the balanced graph $\Gamma$.

Thus, to cluster together pairs of these points from adjacent faces over its neighboring saddle-connections then becomes a graph theoretical \emph{matching problem}. 

Now, the existence of a \emph{perfect matching} on $G$ will allow us to enrich $\Gamma$  to a new graph by inserting 2-valent vertices for each pair of dots matched. We keep to denote the new graph by $\Gamma$. With the enrichment, $\Gamma$ now has $m$ vertices incident to each face. 

Finally, if we show that $\Gamma$ supports an admissible vertex labeling then from Theorem $\ref{bcfromg02}$ we'll be done.

\paragraph{Let's prove the \underline{\emph{\texttt{if}}} part:}
Let $\Gamma\subset X$ be a pullback graph on the compact oriented surface $X$ with post-critical curve $\Sigma$.

From Proposition $\ref{prop-shape-graph}$ follows that the faces are \emph{Jordan} domain's.

Color by \textcolor{deeppink}{pink} the interior of $\Sigma \subset\s^2$ and call it by $P$ and color by \textcolor{blue}{blue} the another component of $\s^2-\Sigma$ and call it by $B$. 

Each point $p\in P$ and $b\in B$ possesses exactly $d$ distinct preimages in $X-\Gamma$, since all critical values are on $\Sigma$. Due to the continuity of $f$ a preimage $\tilde{p}\in f^{-1}(p)$ and $\tilde{b}\in f^{-1}(b)$ can not be in the same face of $\Gamma$, say $F$, for otherwise, we could connect $\tilde{p}$ and $\tilde{b}$ by a curve $\gamma$ into $F$ and in this way $f(\gamma)$ will be a connected set connecting $p\in P$ to $b\in B$ but being interelly contained into $f(F)$ that is equal to $P$ or $B$, what is certantily impossible, since  $P$ and $B$ are disjoint open set. Since $f:X-\Gamma\rightarrow \s^2-\Sigma$ is a local homeomorphism, we also can not have $\tilde{p_0},\tilde{p_1}\in f^{-1}(p)$ into the same face (recall the lifting property of local homeomorphisms). The same, for sure, works for that points over $b$. Therefore, there are $d$ faces of $\Gamma$ colored pink and $d$ faces of $\Gamma$ colored blue. This means that $\Gamma$ is \emph{globally balanced}.


Let $L=\{\gamma_1 , \cdots , \gamma_k\}$ be a cobordant positive multicycle of $\Gamma$ with interior $R$.

Let:
\begin{itemize}
\item[(1)]{$E_{n}>0$ to be the number of corners of $\Gamma$ in $\gamma_n$ that do not are incidente to blue faces inside $\gamma_n$, for each $n\in\{1, \cdots , k\}$;}
\item[(2)]{$a$ be the number of pink faces in $R$;}
\item[(3)]{$b$ be the number of blue faces in $R$;}
\item[(4)]{$D_A$ be the number of dots into those pink faces in $R$;}
\item[(5)]{$D_B$ be the number of dots into those blue faces in $R$;}
\end{itemize}

Then:

\begin{itemize}
\item[(1)]{
since the number of edges $e_j$ bordering a face $B_j$ is equal to the number of corners on its boundary, it follows that
\begin{eqnarray}\label{eqI}
D_B = (m-e_1)+(m-e_2)+\cdots+(m-e_B)=mb-(\sum_{j=1}^{b}e_j)
\end{eqnarray}}
\item[(2)]{and
\begin{eqnarray}\label{eqII} 
D_A = ma-(\sum_{j=1}^{b}e_j)-\sum_{n=1}^{k}E_{n}
\end{eqnarray}}
\end{itemize}

Suppose 
$E_{n}=0$, for each $n\in\{1,\cdots ,k\}$.
Then each connected component of $X-R$ is a \emph{simply connected} domain. This stems from the fact that $\Gamma$ to be conected and $E_n=0$ to imply that each positive cycle $\gamma_{n}$ to be incident to only one blue face outside $R$. Therefore, each component of $X-R$ is a blue face and since $\Gamma$ has so many blue as pink faces, say $d>0$, it follows:

\begin{eqnarray}
b=d-k<d=a
\end{eqnarray}

Now, suppose $E_n >0$ for at least one $n\in\{1\cdots , k\}$.
Then, by the necessary condition from the \emph{marriage theorem} $\ref{ksamento}$ we have:

\begin{align}\nonumber
mb-(\sum_{j=1}^{b}e_j)=D_B &\leq  D_A = ma-(\sum_{j=1}^{b}e_j)-\sum_{n=1}^{k}E_{n} \\ \nonumber 
\shortintertext{since $\sum_{n=1}^{k} E_{n} \geq 1$ ,}
 b&<a 
\end{align}

Thus, $\Gamma$ is \emph{locally balanced}.
\paragraph{Now, let's prove the \underline{\texttt{\emph{only if}}}\, part :}\hspace{6cm}

Let $\Gamma$ be a balanced graph with $m$ corners.

Since each face $F$ of $\Gamma$ is a Jordan domain the number of saddle-connections of $\Gamma$ surrounding $F$ is equal to the number of corners on $\partial{F}\subset\Gamma$. 

Recall that each face $F$ of $\Gamma$ contains $m-e_F$ dots, where $e_F$ is the number of corners incident to $F$.

Let $S$ be an arbitrary set of dots from blue faces of $\Gamma$.

Then the task is: \emph{to show that the set of potential mates for $S$ is at least so large as $S$}. That is the sufficient condition of the \emph{Hall's marriage Theorem} \cite{BondyG}.

Note that the potential mates for a dot into a blue face is exactly the same set of potential mates for any other dot from the same face. Therefore, we can change $S$ adding to it all the remains dots in a face that already has at least one of its dots in $S$. That change will not affect the number of potential mates and, of course, the condition is satisfied for any subset of dots from that enlarged set $S$ whether it itself satisfies the condition. Therefore, due to that, we will take $S$ as being the subset of all dots from a collection $ U $ of blue faces of $\Gamma$.

Denote by $R$ the topological closure of the collection $U$ together with its neighboring pink faces, i.e., $R$ is the union of the faces in $U$ with its neighboring pink faces and all boundaries of those faces.

Then the dots inside pink faces in $R$ are exactly those potential mates for the dots into $S$.

Note that the boundary of $R$ leaves pink faces in its left side, except at the corners.

If the interior of $R$ is not connected, then dots into blue faces of one component can only be matched with those dots inside pink faces from the same connected component of the interior of $R$. Hence we should have enough mates for the individuals of $S$ in each connected component of the interior of $R$. In this way we will have enough mates in $R$ for all individuals. Then is enough to assure the condition for each connected component what allows us to consider $R$ with the interior connected.

Let:
\begin{itemize}
\item[(1)]{$D_A$ denote the number of dots into pink faces inside $R$;}
\item[(2)]{$D_B=|S|$ denote the number of dots into blue faces inside $R$;}
\item[(3)]{$E_1$ be the number of corners on $\partial{R}$ that have only one face from $R$ neighboring it(that number was the number $E_\gamma$ when we prove the local balance condition of a pullback graph above);}
\item[(4)]{$\mu_k$ be the number of corners on $\partial{R}$ that have $k$ blue faces incident to it  from $R$ ;}
\item[(5)]{$\nu_k$ be the number of corners in the interior of $R$ with degree $k$;}
\item[(6)]{$a$ be the number of pink faces in $R$;}
\item[(7)]{$b$ be the number of blue faces in $R$.}
\end{itemize}



Thus, going back to the equations $\ref{eqI}$ and $\ref{eqII}$ we have:
\begin{eqnarray}\nonumber
D_B = mb-\sum_{j=1}^{b} e_j &=& mb-\frac{1}{2}\left(\sum_{j=2}^{d}2\mu_{j} + \sum_{j=2}^{d}{2j}\nu_{2j}\right)\\
&=& mb-\left(\sum_{j=1}^{d}\mu_{j}+\sum_{j=2}^{d}{j}\nu_{2j}\right)
\end{eqnarray}
and
\begin{eqnarray}\nonumber
D_A &=& ma-\frac{1}{2}\left(2E_{1}+\sum_{j=1}^{d}2\mu_{j} + \sum_{j=2}^{d}{2j}\nu_{2j}\right)\\
&=& ma-\left(E_{1}+ \sum_{j=1}^{d}\mu_{j} + \sum_{j=2}^{d}{j}\nu_{2j}\right)
\end{eqnarray} 

From the local balance condition we have $b \leq a-1$, and we also have $E_1 + \sum_{j=1}^{d}\mu_{j} + \sum_{j=2}^{d}\nu_{2j}\leq m$ where $m$ is the total number of corners of $\Gamma$. 

Hence
\begin{eqnarray}
D_A &=& ma-\left(E_{1}+ \sum_{j=1}^{d}\mu_{j} + \sum_{j=2}^{d}{j}\nu_{2j}\right)\\
&\geq& ma-\left(m+ \sum_{j=1}^{d}\mu_{j} +\sum_{j=2}^{d}{j}\nu_{2j}\right)\nonumber\\
&=&m(a-1)-\left(\sum_{j=1}^{d}\mu_{j} + \sum_{j=2}^{d}{j}\nu_{2j}\right)\nonumber\\
&\geq &  mb-\left(\sum_{j=1}^{d}\mu_{j}+\sum_{j=2}^{d}{j}\nu_{2j}\right)\nonumber\\
&=&D_B
\end{eqnarray} 

That is the desired inequality.

Therefore, we have proved that for an arbitrary set $S$ of dots from blue faces of $\Gamma$ the set of potential mates for those dots into $S$ is so large as $S$. Then the \emph{Hall's Marriage Theorem} \cite[Theorem ~2.1.2.]{Diest} with the global balancedness assures the existence of a \emph{perfect matching}.

For each pair of dots matched we get a new vertex on the common side separating the faces containing those dots. These new vertices are taken distinct for each matched pair of dots from the same pair of faces.

Then, $\Gamma$ was enriched into a new graph, now with a bunch of $2$-valent vertices inserted, that we shall continue denoting by $\Gamma$.

But in addition to having $m$ vertices incident to each face, these vertices must be numbered cyclically (regarding the graph orientation) in such a way that the number at a corner given from each face labeling incident to it is the same and, furthermore, with such labeling being in accordance with an admissible passport. With `` to be in accordance with a passport '' we mean that the sum of half the degree of the vertices for a fixed label is equal to the degree $d$ of $ \Gamma $, for each label $ j\in \{1, 2, \cdots , m \}$.

Thus we have to ensure that we can always perform a vertex labeling with that especifications on such a enriched balanced graph. That is, every balanced graph is an admissible graph. Therefore, from Theorem $\ref{bcfromg2}$, we will be done!

\begin{lem}\label{lththm}
The enriched balanced graph obtained above is admissible.
\end{lem}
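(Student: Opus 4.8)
The goal is to equip the enriched graph $\Gamma$ (which by construction already has exactly $m$ vertices incident to every face, and which is still globally balanced since inserting $2$-valent vertices alters neither the faces nor their coloring) with a vertex labeling by $\{1<2<\cdots<m\}$ meeting the two clauses of Definition \ref{adm-v-l}; by Theorem \ref{bcfromg02} this is all that remains. My plan is to dispatch clause (2) as a bookkeeping consequence of clause (1) together with global balance, and then to concentrate the real effort on producing a labeling satisfying clause (1).

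First I would record that clause (1) forces a rigid local picture: since each face carries exactly $m$ vertices and the $m$ labels must occur once each in cyclic order, consecutive vertices along a face boundary carry consecutive labels. Equivalently, I seek a map $L\colon V(\Gamma)\to\mathbb{Z}/m\mathbb{Z}$ whose value increases by exactly $1$ across every edge oriented so as to keep the preferred color (the $A$-color) on its left. Granting such an $L$, clause (2) is automatic: a vertex $v$ of degree $2k$ is incident to exactly $k$ preferred-color faces, and since each such face meets the label class $L^{-1}(j)$ in precisely one vertex, summing $\tfrac{1}{2}\deg(v)$ over $v\in L^{-1}(j)$ counts the incidences between label-$j$ vertices and preferred-color faces, which is the number of preferred-color faces, namely $d$ by the global balance condition (Definition \ref{fefn-bg}). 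Hence $\sum_k \deg(v_j^k)/2=d$ for every label $j$.

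To build $L$ I would proceed by propagation. Labeling the $m$ vertices of one chosen face cyclically, I extend the labeling across shared saddle-connections: a matched pair of dots, now a single $2$-valent vertex, must receive one label, and this value together with the cyclic constraint pins down the labeling of each neighboring face. Running this over a spanning tree of the face-adjacency (dual) graph assigns a label to every vertex, and within each face the assignment reads $1,2,\dots,m$ by construction. What remains is \emph{consistency}: that two propagation routes to the same vertex agree, i.e.\ that the accumulated label increment around every closed loop vanishes modulo $m$. Around any loop bounding a union of faces this is immediate, since each face contributes a full increment of $m\equiv 0$; in Thurston's planar situation every loop is of this type, which is why no further argument is needed there.

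The main obstacle is exactly the consistency around the non-contractible loops that appear once $g>0$: the label increment defines a class in $H^1(S_g;\mathbb{Z}/m\mathbb{Z})$ that must be shown to vanish, and it is here that Thurston's cohomological input has to be replaced. The plan is to recast this closing-up condition as the existence of a feasible weighting (Definition \ref{f-weight}) on an auxiliary \emph{multi-extremal chargeable graph} of constant capacity assembled from the face and saddle-connection incidences of $\Gamma$, with the weights recording how the labeling advances. Proposition \ref{vertex-capacity-fn} (charge conservation) then equates the input and output values of any feasible weighting, and this equality is precisely the statement that competing propagation routes accumulate the same increment, so the per-face labelings glue to a globally defined $L$. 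Feasibility of the weighting should follow from the perfect matching produced above via Hall's theorem (Theorem \ref{ksamento}) together with the local balance condition. Once $L$ is in hand, $\Gamma$ is admissible in the sense of Definition \ref{adm-g}, which proves the lemma.
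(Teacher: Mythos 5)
Your proposal follows essentially the same route as the paper's proof: you propagate a cyclic labeling face by face from an initial preferred-color face, dispatch clause (2) by the same incidence count against the $d$ preferred-color faces, and resolve consistency of competing propagation routes by applying Proposition \ref{vertex-capacity-fn} to a multi-extremal chargeable graph of constant capacity $m$ assembled from the dual graph with each dual edge weighted by the label increment along its saddle-connection --- which is precisely the paper's construction. The only differences are presentational: the paper phrases consistency as agreement of the two positive boundary paths joining shared corners of two faces rather than as vanishing of a class in $H^1(S_g;\mathbb{Z}/m\mathbb{Z})$ (and it additionally re-labels when some label class consists only of $2$-valent vertices, a refinement not required by the definition of admissibility), but the mechanism is identical.
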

\begin{proof}[proof of the lemma $\ref{lththm}$]
We must display 
one admissible vertex labeling for $\Gamma$ (the enriched graph). $\Gamma$ has $m$ corners
. We can construct an admissible vertex labeling $N:V(\Gamma)\rightarrow \{1<2<\cdots<m\}$ 
inductively, 
as follows.

First, choose a pink face $F_1 \in F(\Gamma)$ with a numbering of the $m$ vertices incident to it by $1, 2, \cdots, m$ appearing in this order around the face keeping it on the left side. 

For a (labeled) corner adjacent to $F_1$, say $c_1$, we consider all the pink faces incident to it. Then we complete the labeling of the left $m-1$ vertices on each face respecting the already labeled corner $c_1$ incident to it in such a way that the increasing order of the labelings coincide with the positive sence of the orientation. Let $F_{2}$ be a face incident to $c_1$, but also incident to another corner, say $c_2 \in \partial{F_1}$. Since each vertex has to have a unique label assigned to it we must to ensure that the label assigned to the corner $c_2\in \Gamma$ when we label the vertex adjacent to $F_{2}$, as especified above, is equal to the one assigned to it from the label of it as a vertex incident to $F_{1}$. We shall see that this is the case, but for the sake of readability, we will leave the proof of that to the end, and then continuing the argumentation assuming it. 

That procedure stops at some point since we have a finite number of faces, each one with only $m$ vertex adjacent to it. In that way we have constructed a surjective map 
$N:V_{\Gamma}\rightarrow \{1, 2, \cdots, m\}$. And at each blue face the indices $1, 2, \cdots, m$ appears at this order but in reverse sense of the edges orientation (recall that the edges are oriented kepping pink faces on its left side). 

But can occur that one index $k\in\{1, 2, \cdots, m\}$, or actually more than only one, do not be attained by a corner through the map $N$, i. e., so that $N^{-1}(k)$ concists only by $2$ valent vertices of $\Gamma$.

If that was not the case, then $N$ defines an admissible vertex labeling to $\Gamma$ since by construction a label $j$ is assined to only one vertex of each pink face and we have $d$ faces, furthemore, if $e$ is the valence of a vertex with label $j$ there are exacle $\frac{e}{2}$ pink face incident to it.

On the other hand, let 
$M\subset \{1,  2, \cdots, m\}$ with $|M|=p<m$ be the subset of the labelings $k\in \{1,  2, \cdots, m\}$ such that $N^{-1}(k)$ is made up only by $2$-valent vertices.
Then we can erase from the enriched graph all the vertices with label in $M$ and in the sequel to repeat the procedure of the construction of $N$ presented above wth the label set $\{1, 2, \cdots, m-p\}$.
Thus we will get a vertex labeling that tags more than one corner of the graph with the same label, for at least one label into $\{1, 2, \cdots, m-p\}$. For the same reason given above, that labeling is admissible. 

Now, let's prove the part left about the (global) consistency of the procedure presented above to construct a vertex labeling. 

Let $\{e_{j}^{1}\}_{j=1}^{x}\subset E(\Gamma^{\ast})$ and $\{e_{j}^{2}\}_{j=1}^{y}\subset E(\Gamma^{\ast})$ be the sets of edges of the bipartite dual graph $\Gamma^{\ast}$ of $\Gamma$ made up by the edges duals to the saddle-connections adjacents to $F_1$ and $F_{2}$, respectively, that form the positive path into $\Gamma$ connecting $c_1$ to $c_2$.

Thus, we consider the subgraph $G^{\ast}\subset\Gamma^{\ast}$ formed by the collection of paths into $\Gamma^{\ast}$ that possesses the inital edge in $\{e_{j}^{1}\}_{j=1}^{x}\subset E({\Gamma^{\ast}})$ and terminal edge in $\{e_{j}^{2}\}_{j=1}^{y}\subset E({\Gamma^{\ast}})$. $G^{\ast}$ have two sets of distinguished vertices, one is the singleton $I:=\{F_{1}^{\ast}\}$ and the another one is the subset 
$O\subset V(\Gamma^{\ast})$ of the vertices of $\Gamma^{\ast}$ 
duals to those blue faces that are incident to the positive path adjacent to $F_2$ joining $c_1$ and $c$. 

Note that if the cycle $\gamma:=\prod_{j=1}^{x}e_{j}^{1}\cdot{}\prod_{j=1}^{y} e_{j}^{2}$ is a separating curve of the underline surface $X$ such that the component $\Omega\subset X$ that not contains the face $F_1$ is a disk, then the defining condition of $G^{\ast}\subset\Gamma^{\ast}$ is the same that define $G^{\ast}\subset\Gamma^{\ast}$ as the subgraph of $\Gamma^{\ast}$ consisting of its part inside $\Omega$ together the edges dual to the saddle-connections into $\gamma$.

To each edge $e\in E({G^{\ast}})$ of $G^{\ast}\subset\Gamma^{\ast}$ we assign the positive integer $n-1$ where $n$ is the number of vertices over its dual saddle-connection $e^{\ast}\in E({\Gamma})$. Therefore, for each vertex of $G^{\ast}$ not being in $I\subset V(G)$ or $O\subset V(G)$ 
the sum of the numbers attached to the edges incident to it equals $m$. That is $G$, endowed with the above decribed structure, is a multi-extremal weighted graph with charge $m$. 

Let $\epsilon_{j}^{1}\geq 1$ be the number assigned to the edge $e_{j}^{1}$ for $j\in\{1,2,\cdots, x\}$ and $\epsilon_{j}^{2}\geq 1$ be the number assigned to the edge $e_{k}^{2}$ for $k\in\{1,2,\cdots, y\}$.

If $\sum_{j=1}^{x}\epsilon_{j}^{1}=\sum_{j=1}^{y}\epsilon_{j}^{2}$, the positive paths $\prod_{j=1}^{x}(e_{j}^{1})^{\ast}$ and $\prod_{j=1}^{y} (e_{j}^{2})^{\ast}$ from $c_1$ to $c$ have the same number of vertex on it, therefor the labeling atributed to $c$ by the labeling of the vertices adjacents to $F_{2}$, as described previously, will agree with the one assigned by the labeling of the vertices that are incident to $F_1.$

But, 
{Proposition $\ref{vertex-capacity-fn}$} assure the expected equality between the numbers $\sum_{j=1}^{x}\epsilon_{j}^{1}$ and $\sum_{j=1}^{y}\epsilon_{j}^{2}$, sice they are the input and output values of the multi-extremal weighted graph with constant capacity $m$, $G$.


\begin{figure}[H]
 \begin{center}
 \subfloat[]
{\includegraphics[width=4cm,height=4.2cm]{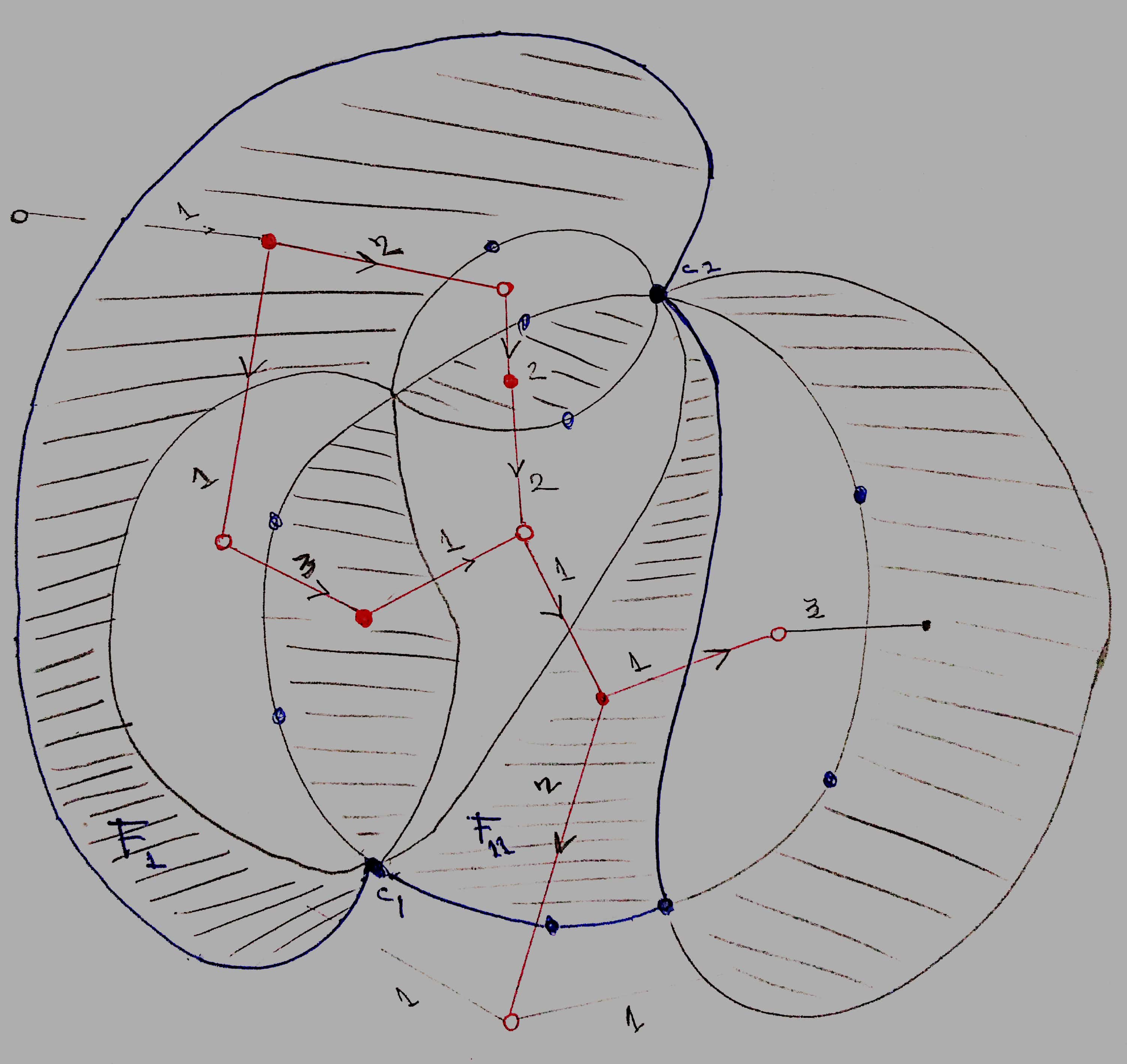}}
\quad
\subfloat[]
{\includegraphics[width=3.5cm]{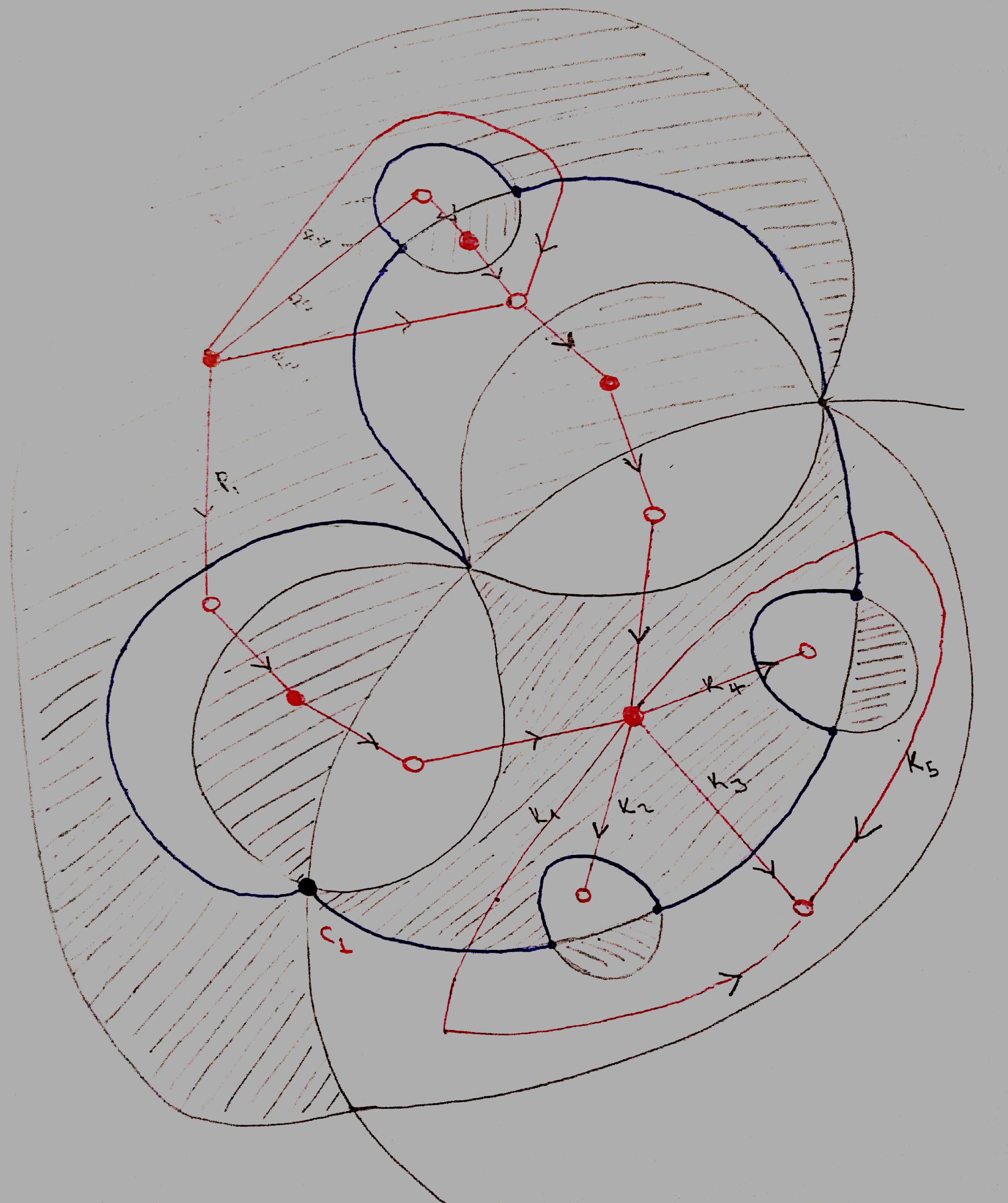}}
\caption{}
\end{center}
\end{figure}

\end{proof}

Then we are done.
\end{proof}

For reference, we will state the theorem below, which is a key feature of the second half of the previous proof.

\begin{thm}\label{gbg-adm}
Every balanced graph is admissible.
\end{thm}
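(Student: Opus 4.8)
The plan is to show that any balanced graph $\Gamma$ with $m$ corners can be enriched by $2$-valent vertices and equipped with an admissible labeling, so that Theorem~\ref{bcfromg02} applies. First I would attach to each face $F$ of $\Gamma$ exactly $m-e_F$ marked dots, where $e_F$ is the number of corners incident to $F$; these record the ``missing'' labels on that face. The dots split into two classes according to the $\textcolor{deeppink}{A}$-$\textcolor{blue}{B}$ coloring, and joining two dots whenever their host faces share a saddle-connection produces a bipartite graph $G=G(\Gamma)$ with parts $DA$ and $DB$. A perfect matching of $G$ is exactly the data needed to insert the $2$-valent vertices that bring every face up to $m$ incident vertices.

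The first substantive step is to produce that matching via Hall's Theorem~\ref{ksamento}. Given a set $S$ of dots lying in blue faces, I would enlarge $S$ to contain \emph{all} dots of the blue faces it meets (this does not change the neighbourhood), and let $R$ be the union of those blue faces with their adjacent pink faces; restricting to a connected component of $R$, one may assume $R$ connected. Writing $a,b$ for the numbers of pink and blue faces in $R$, a boundary count gives $D_B=mb-\sum_j e_j$ for the blue dots and $D_A=ma-(E_1+\sum_j\mu_j+\sum_j j\,\nu_{2j})$ for the pink dots, where $E_1,\mu_j,\nu_{2j}$ tally the boundary and interior corners of $R$. The local balance condition yields $b\le a-1$, while the corner inventory gives $E_1+\sum_j\mu_j+\sum_j\nu_{2j}\le m$; combining these forces $D_A\ge D_B=|S|$, which is precisely Hall's condition $|N_G(S)|\ge|S|$. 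Together with global balancedness ($|DA|=|DB|$), Corollary~\ref{cor-ksamento} then yields a perfect matching.

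With the matching in hand I would insert, for each matched pair of dots, a distinct $2$-valent vertex on the shared saddle-connection, so that every face now carries exactly $m$ vertices. The labeling is built inductively: fix a pink face $F_1$ and number its $m$ boundary vertices $1<\cdots<m$ in the cyclic order that keeps the preferred colour on the left, then propagate the labels to every pink face adjacent to an already-labelled corner, respecting the orientation. If some index $k$ is realized only by $2$-valent vertices and never by a corner, I would delete those vertices and rerun the construction with the reduced label set. The resulting map satisfies the degree condition $\sum_k \deg(v_j^k)/2=d$ for every label, since each label occupies exactly one vertex per pink face and there are $d$ pink faces.

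The main obstacle is the \emph{global consistency} of this inductive labeling: when a single corner $c$ is reached through two different propagation paths, the two labels assigned to it must coincide. I would recast this as an equality of weighted path-lengths in the bipartite dual $\Gamma^{\ast}$. Assigning to each dual edge the integer $n-1$, where $n$ counts the vertices on its dual saddle-connection, turns the relevant subgraph $G^{\ast}$ (the paths from $F_1^{\ast}$ to the blue faces along the positive path from $c_1$ to $c$) into a multi-extremal chargeable graph of constant capacity $m$ carrying a feasible weighting. The discrepancy between the two candidate labels at $c$ is exactly $\sum_j\epsilon_j^1-\sum_j\epsilon_j^2$, the difference of the input and output values of this weighting, and \emph{charge conservation}, Proposition~\ref{vertex-capacity-fn}, guarantees these are equal. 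This closes the consistency gap, so the enriched $\Gamma$ admits an admissible labeling and is therefore admissible.
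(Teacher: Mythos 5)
Your proposal is correct and follows essentially the same route as the paper's own argument: the identical dot construction with $m-e_F$ dots per face, the same verification of Hall's condition via the corner inventory $E_1$, $\mu_j$, $\nu_{2j}$ together with the local-balance bound $b\le a-1$, the same enrichment by $2$-valent vertices from the resulting perfect matching, and the same resolution of the labeling-consistency obstacle by passing to the bipartite dual with edge weights $n-1$ and invoking charge conservation (Proposition~\ref{vertex-capacity-fn}). No substantive deviation from the paper's proof.
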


Now, notice that the admissible vertex labeling depends on the matching realized to enrich the balanced graph. So a balanced graph (i.e., without $2$-valent vertices) can be the pullback graph of more than one branched cover, but all being of the same degree. See the example given below:

\begin{ex}
Distincts matchings on the same balanced graph:

\vspace{-.5cm}
 \begin{figure}[H]
 \begin{center}
       \subfloat[enriching a balanced graph from a perfect matching]
    {{\includegraphics[width=4cm]{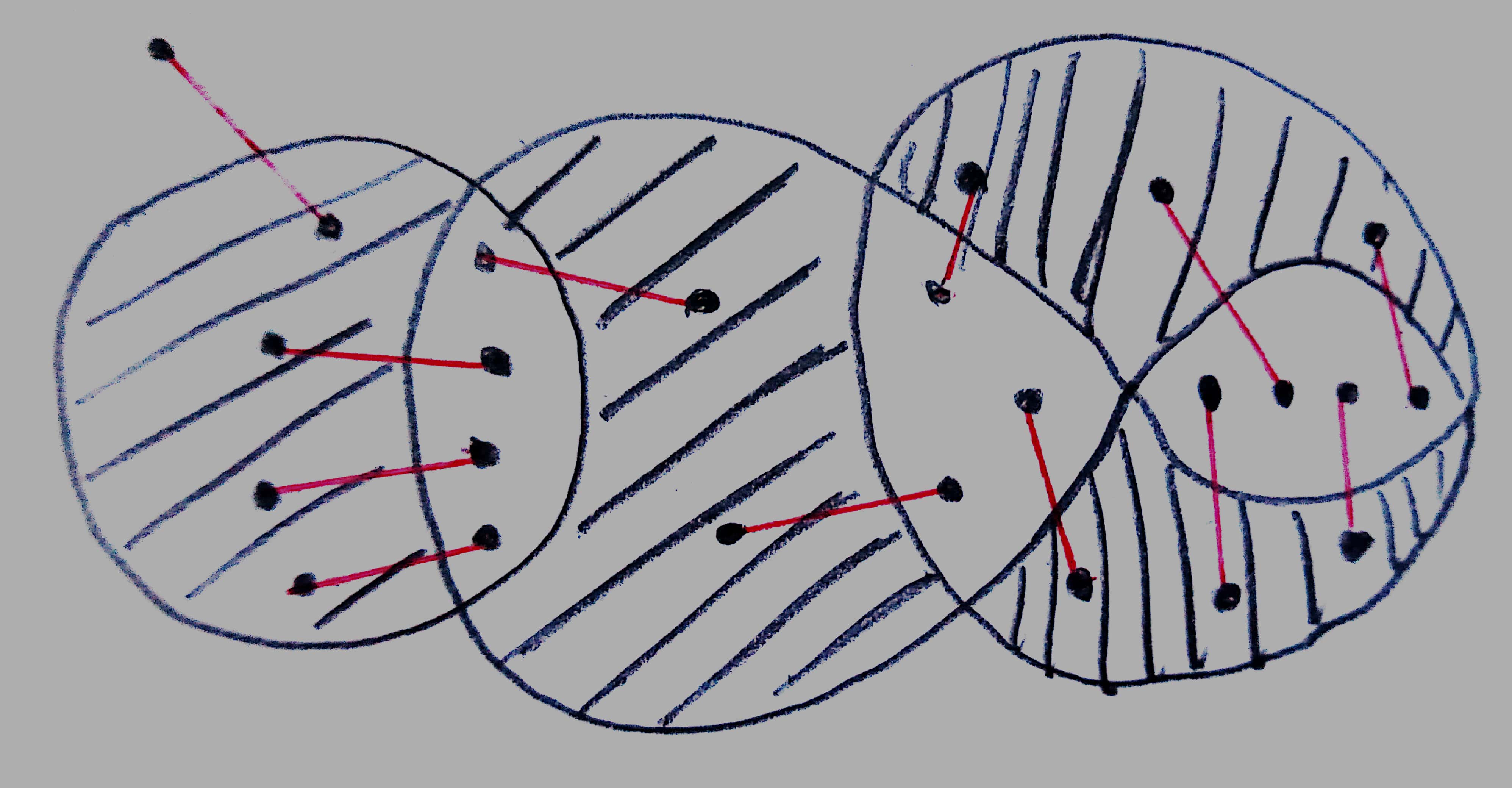}}}\quad
    \subfloat[admissible graph from (a)]  
    {{\includegraphics[width=4.25cm]{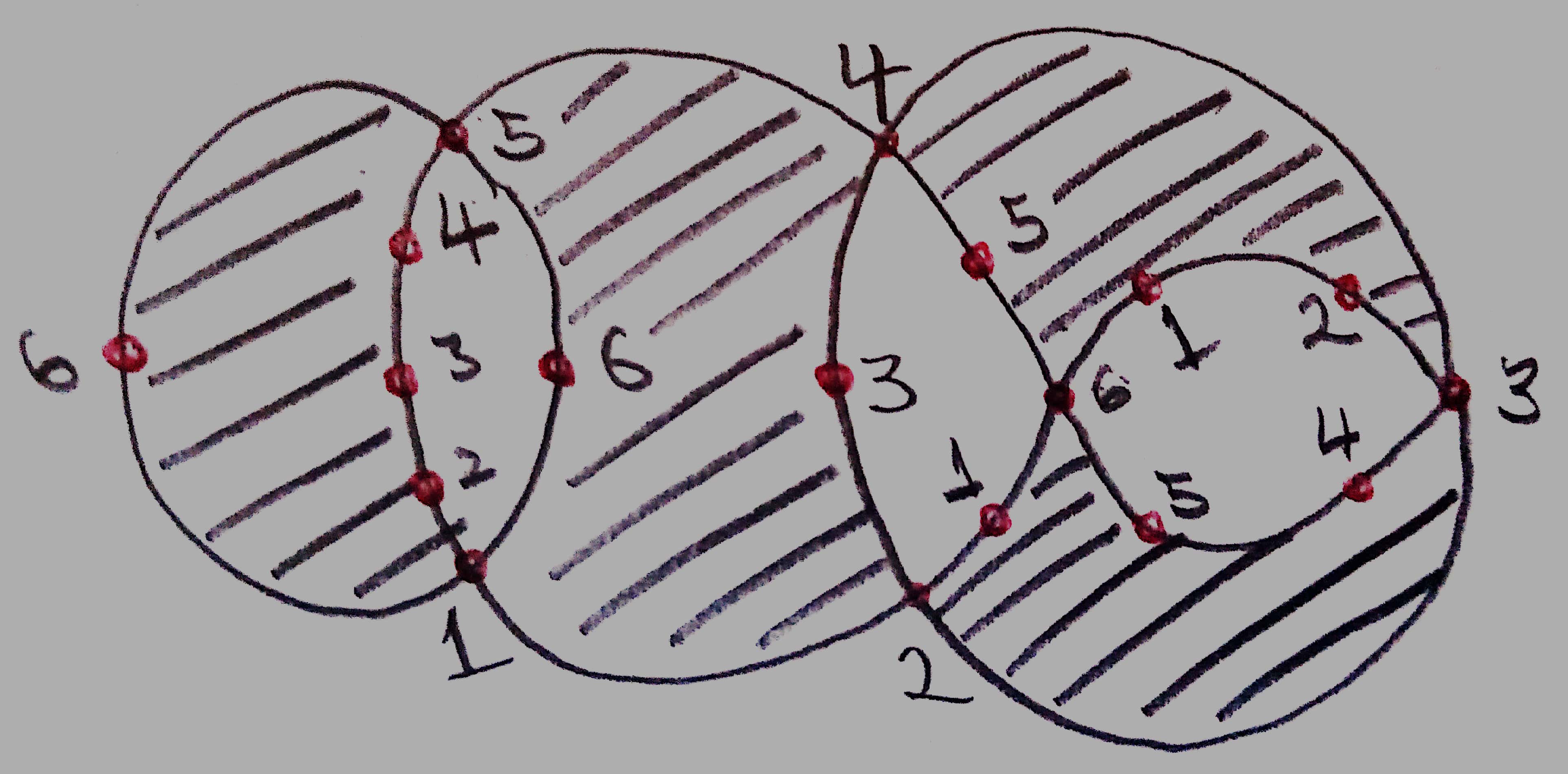} }}\quad
 
 \vspace{-.3cm}
       \subfloat[enriching a balanced graph from a perfect matching]
    {{\includegraphics[width=3.45cm,height=2.5cm]{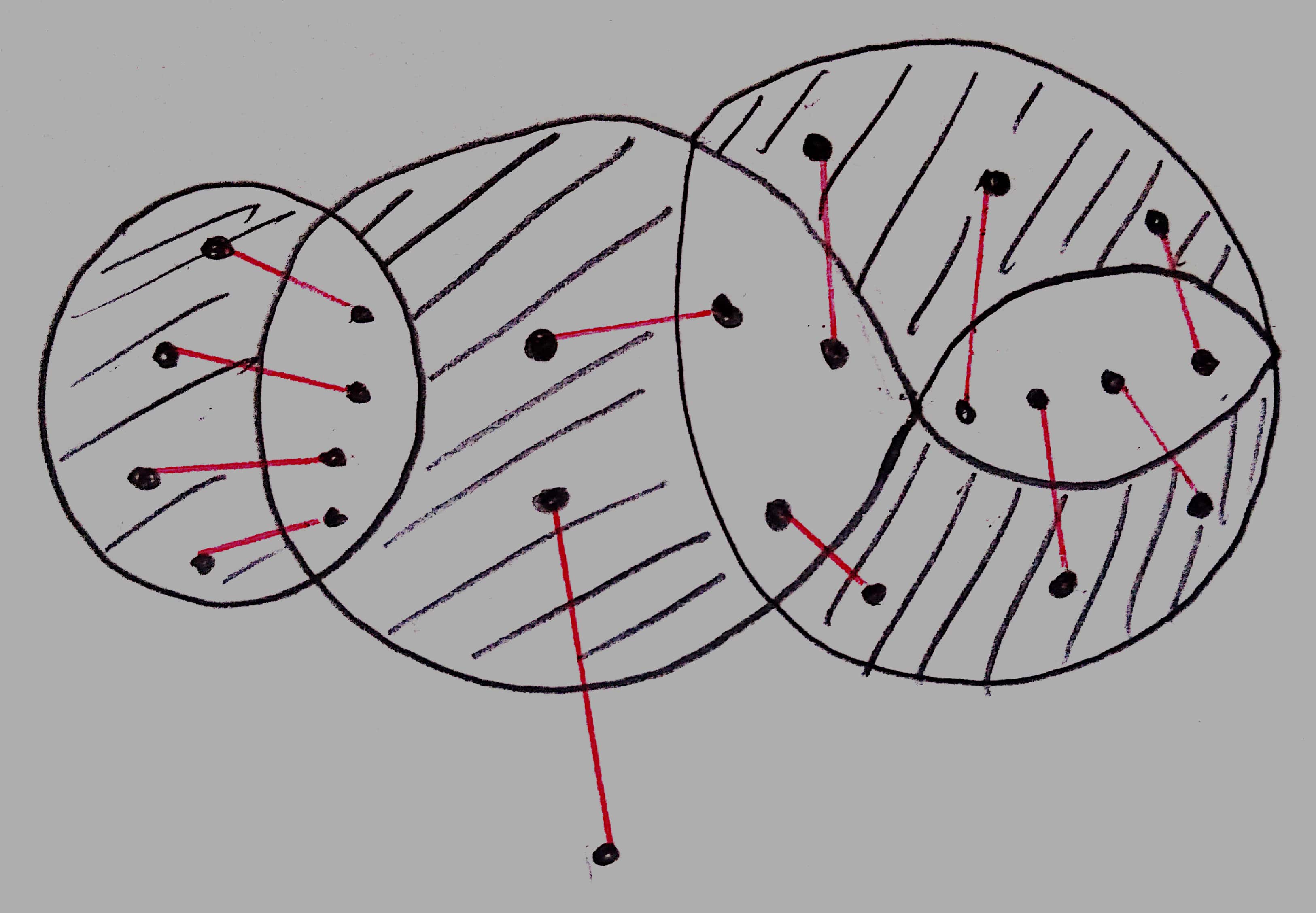} }}    
    \quad
    \subfloat[``superfluous'' admissible labelling]
    {{\includegraphics[width=3.9cm,height=2.5cm]{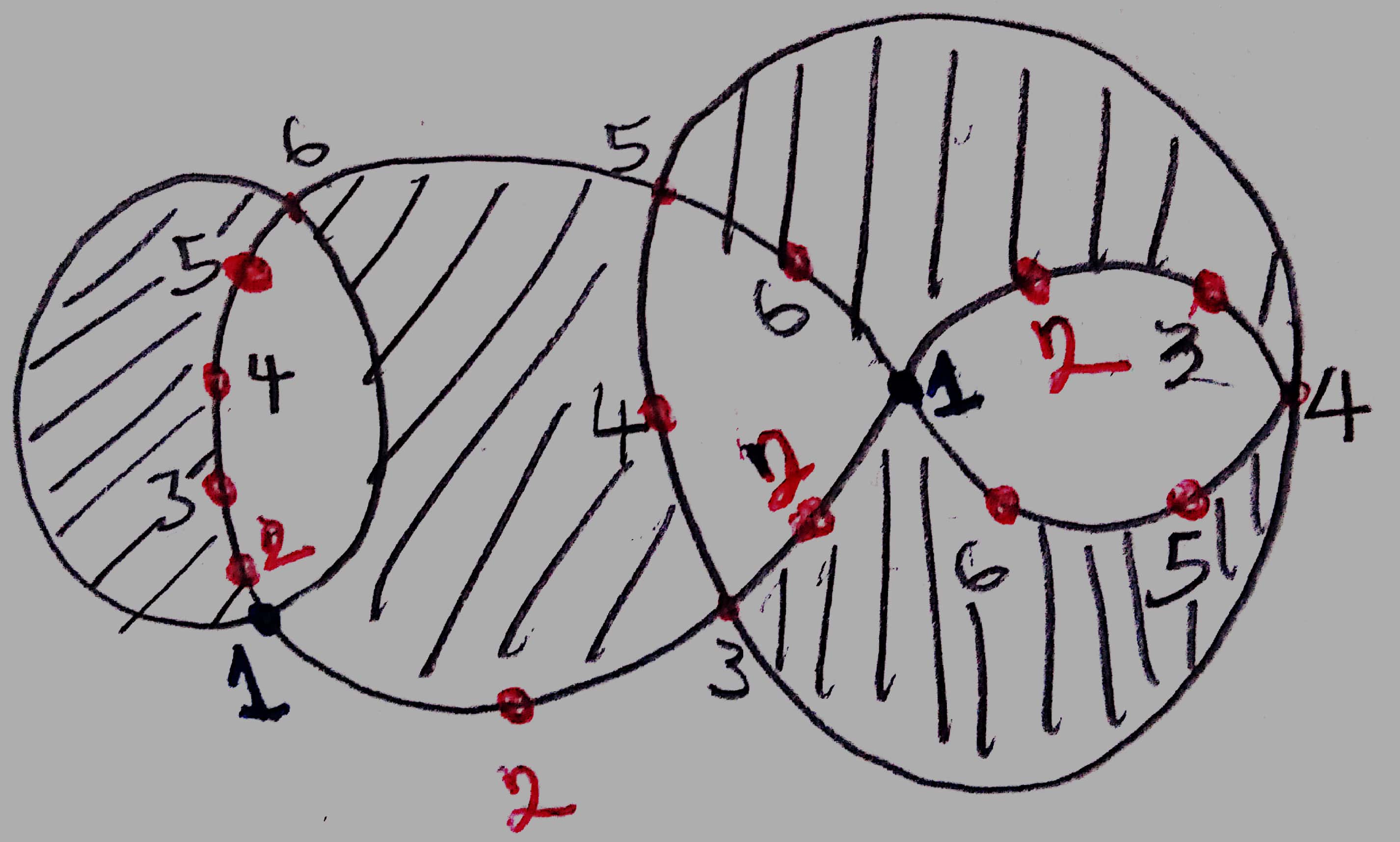} }}\quad
    \subfloat[admissible graph from (b)]
    {{\includegraphics[width=4cm,height=2.5cm]{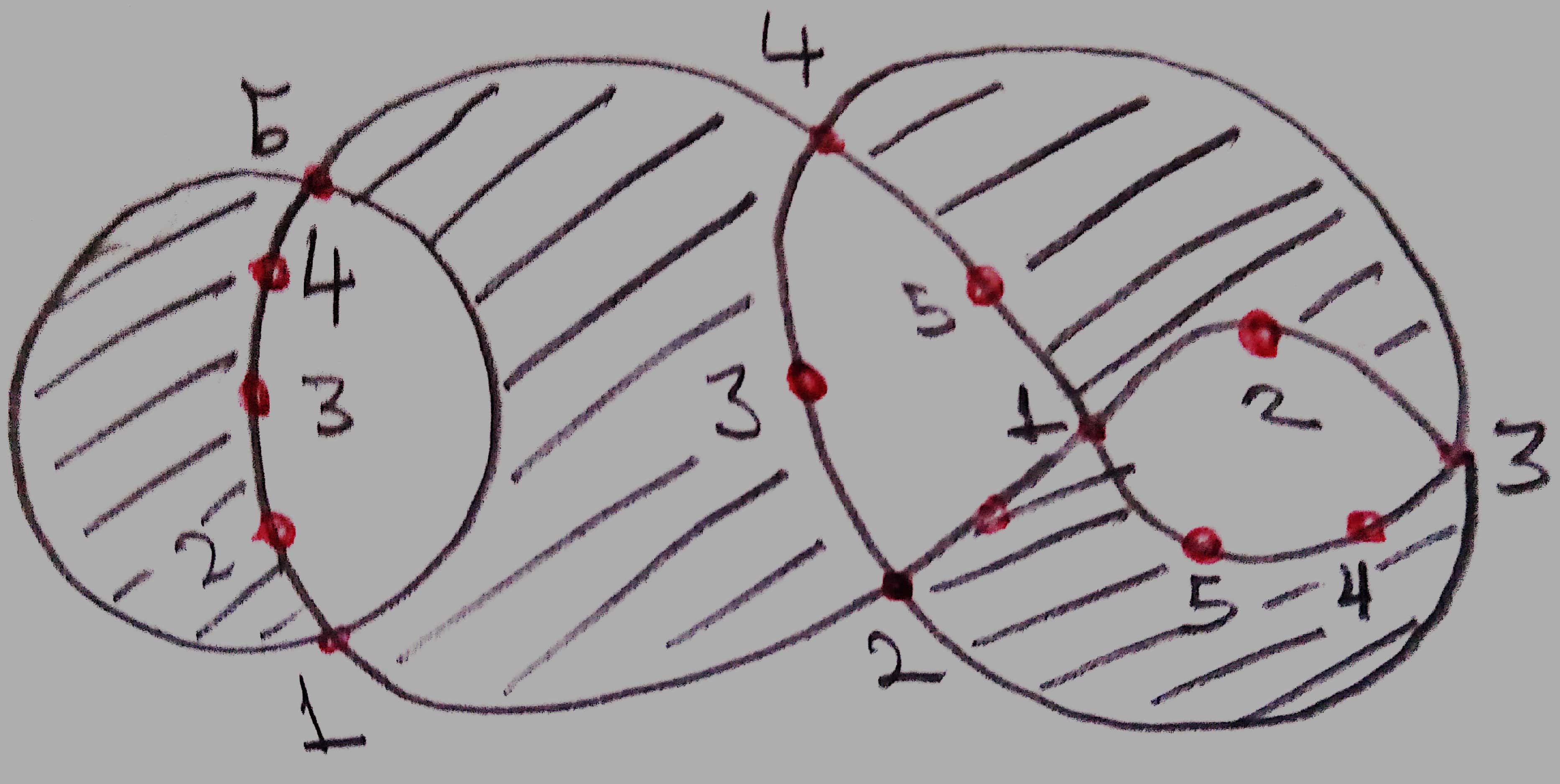} }}
    
    \vspace{-.5cm}
      \caption[matching on a balanced graph]{}
    \end{center}
\end{figure}   
\end{ex}

\vspace{-.5cm}
\begin{ex}[another example]\hspace{7cm}

\vspace{-.5cm}
\end{ex}
 \begin{figure}[H]
 \begin{center}
       \subfloat[enriching a balanced graph from a perfect matching]
    {{\includegraphics[width=4cm,height=3cm]{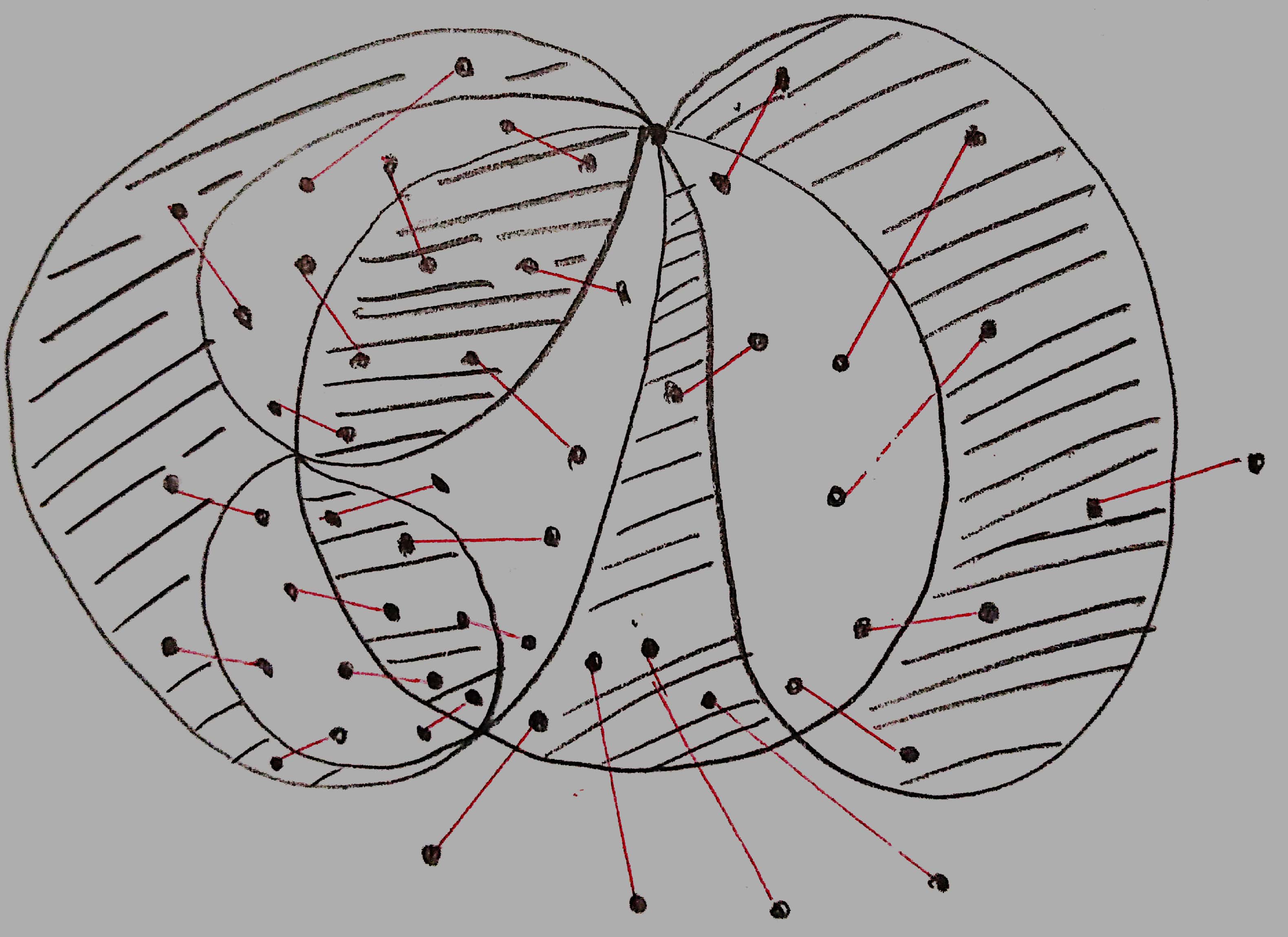} }}    
    \qquad
    \subfloat[``superfluous'' admissible labelling]
    {{\includegraphics[width=4cm,height=3cm]{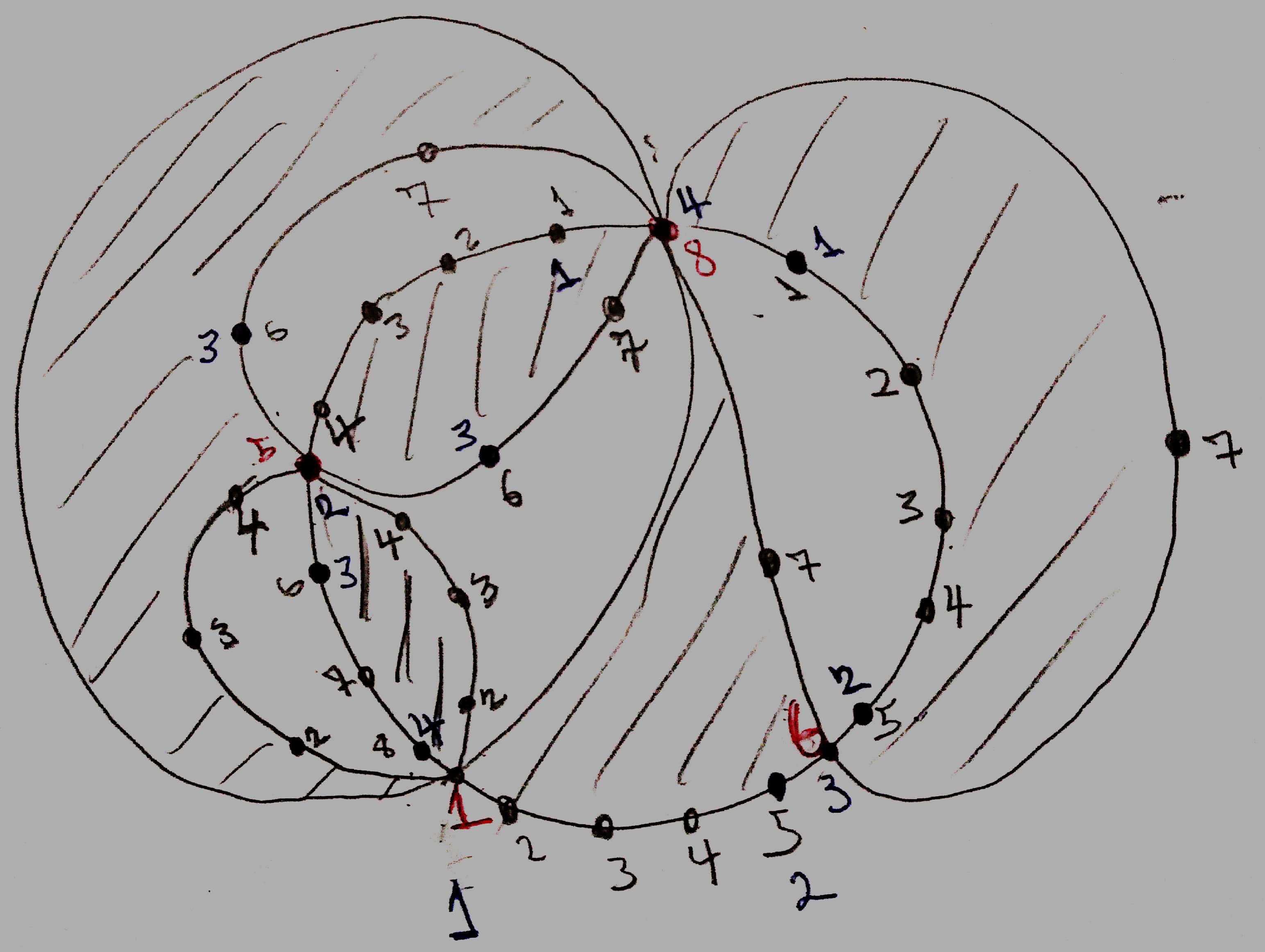} }}\quad
    \subfloat[admissible graph from (b)]
    {{\includegraphics[width=4cm,height=3cm]{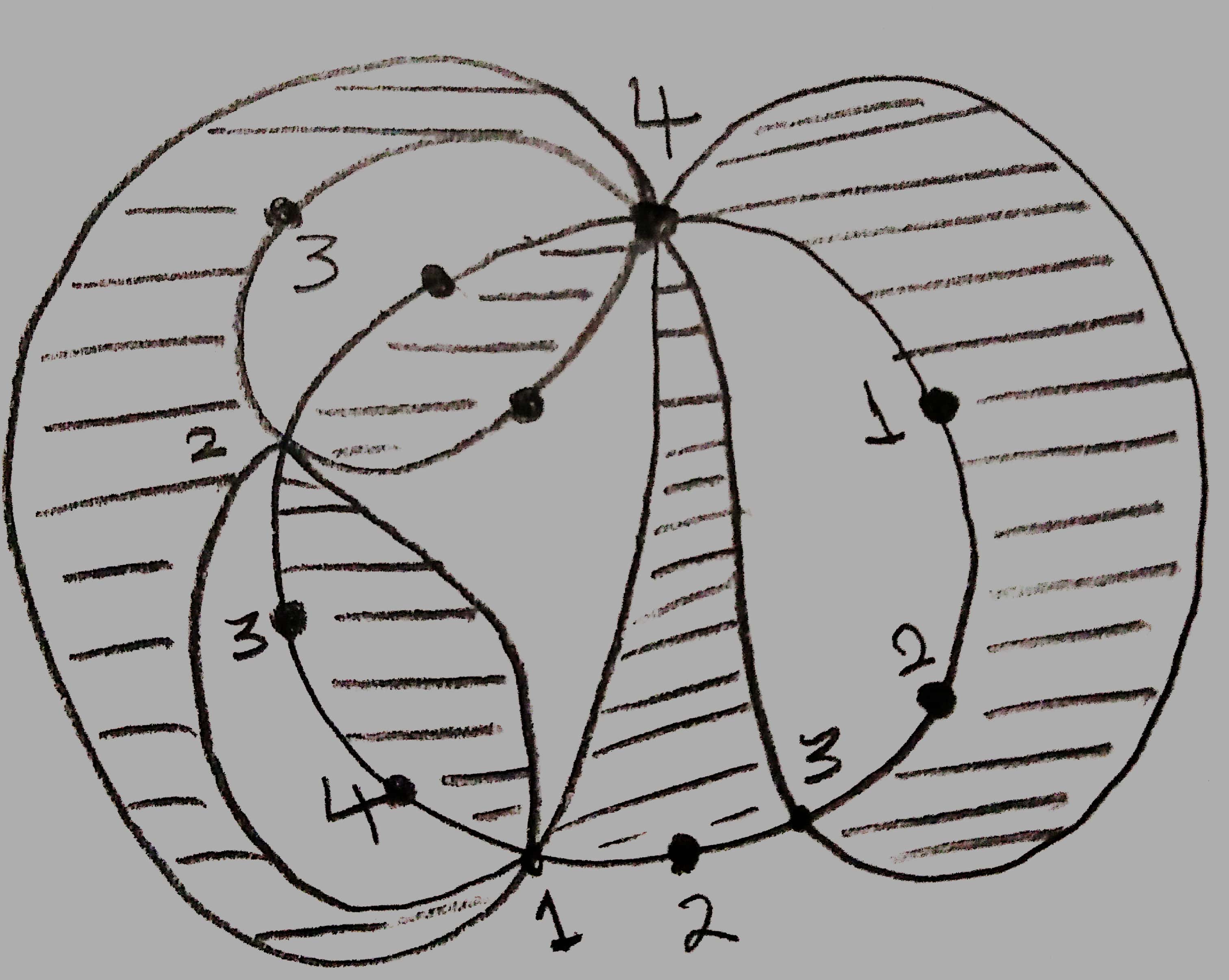} }}
    
    \vspace{-.25cm}
    \caption[matching on a balanced graph]{}
    \end{center}
\end{figure}

\section{Rational functions with real critical points}\label{2.5-sect}

A \emph{rational function} of the Riemann sphere is called to be \emph{real} it is a quotient of polynomials with real coefficients.


That class of real functions with real critical points has a \emph{canonical post-critical curve}, namely the real  line $\overline{\R}\subset\overline{\C}$, since $f(\overline{\R})\subset\overline{\R}$ for every real rational function. 

Each rational function $f \in \R(z)$ with $C(f) \subset \R$ satisfies $f(\overline{z}) = f(z)$ for all $z\in f^{-1}(\R)$. Therefore, for that type of functions, their pullback graph $\Gamma = f^{-1}(\R)$ are symmetric 
relative to $\R$. 

Thus, by the symmetry, each pullback graph $\Gamma=f^{-1}(\R)$ is uniquely determined by its non-real edges into the upper half-plane $\mathbb{H}^{u}:=\{ x+iy \in \C; y>0\}$. Any two edges of $\Gamma$ do not intersect unless at their terminal points in $\rr$. 
Our theorem $\textrm{\ref{teo-a}}$ implies that such graphs are balanced.

\begin{ex}
Real pullback graphs of some degree $3$ rational functions: \\
$f_1 (z)=\frac{z^2 \left(-\left(\sqrt{7}+2\right) z+2 \sqrt{7}+1\right)}{\left(\sqrt{7}-4\right) z+3}$, 
$f_2 (z)=\frac{z^2 \left(\left(\sqrt{7}-2\right) (-z)+2 \sqrt{7}-1\right)}{\left(\sqrt{7}+4\right) z-3}$ and $f_3 (z)=\frac{z^3}{3 z-2}$, respectively.
\begin{figure}[H]
 \begin{center}
\includegraphics[width=11cm]{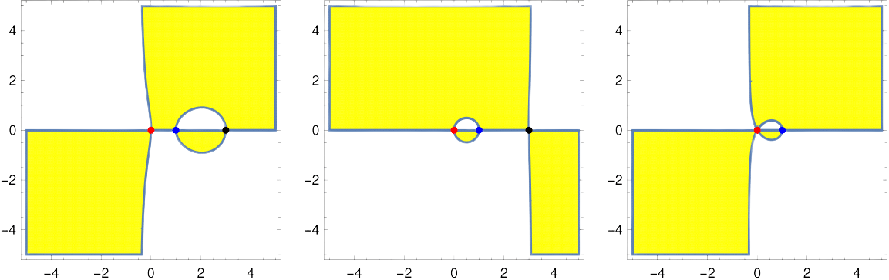}
\caption{Real Pullback Graphs}\label{rp02}
\end{center}
\end{figure}
\end{ex}

The first and the second pullback graphs in the Figure $\ref{rp02}$ corresponds to the unique two non-equivalent cubic generic real rational functions that maintains fixed the points $0,1$ and $\infty$ and it has critical points at $0,1,3, \infty$.

Such analysis of the pullback graphs’ structure provides guidance on how to construct a genuine degree $d$ real rational function with a prescribed ramification profile (see ).

Firstly, let us introduce some basic notions and results we will need for.


\begin{defn}[real globally balanced graph]\label{rgb-g}
A \emph{standard real balanced graph} is a planar globally balanced graph $\Gamma\subset\s^2$ that satisfies:
\begin{itemize}
\item[(i)]{$V(\Gamma) \subset \RPu$;}
\item[(ii)]{the $1$-skeleton of $\Gamma$ contains $\RPu$ and it is invariant under complex conjugation, $\overline{{\hspace{.1em}}^{\hspace{.35em}}}:z=x+iy\mapsto\overline{z}:=x-iy$. .}
\end{itemize}

A degree $d$ globally balanced graph is said to be \emph{real} if it is isotopic to a standard real globally balanced graph. It is called \emph{simple} 
 if it has $2d - 2$ vertices, each one of degree (valence) equals $4$. 

In a real balanced graph, the \emph{real cycle} is the one which is deformed to $\RPu$ by every isotopy attesting to its reality.
\end{defn}




\vspace{-.5cm}
\begin{figure}[H]
    \begin{center}
       \subfloat[non-simple of degree $3$]
    {{\includegraphics[width=3.6cm]{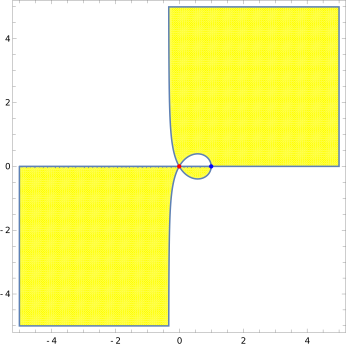} }}
        \hfill
    \subfloat[degree $3$, simple]
    {{\includegraphics[width=3.6cm]{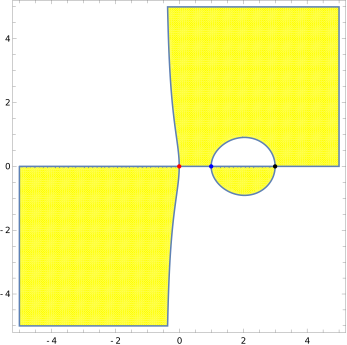} }}
\hfill
    \subfloat[non-simple of degree $7$ (polynomial)] 
    {{\includegraphics[width=3.6cm]{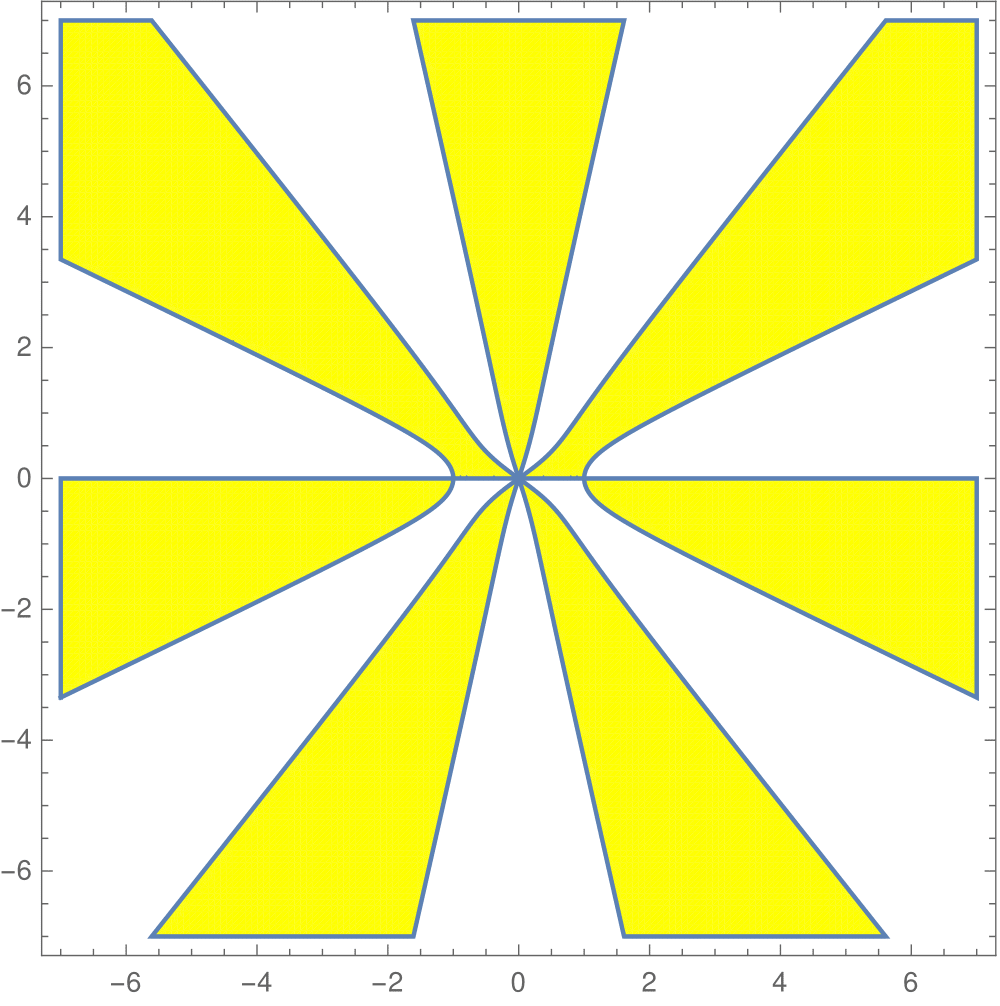} }}\hfill
    \subfloat[simple of degree $4$]
    {{\includegraphics[width=3.6cm]{deg4pbg-00.png} }}
\hfill
    \subfloat[degree $6$]
    {{\includegraphics[width=3.6cm]{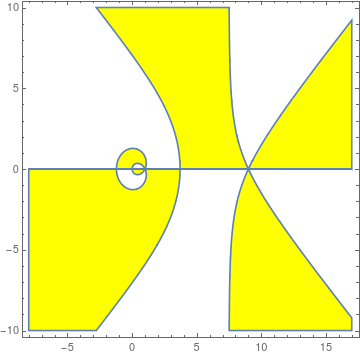} }}%
  	\end{center} 
	
	\vspace{-.5cm}
  	 \caption{real globally balanced graphs}
 \end{figure}

\end{figure}

\subsection{Constructing real rational functions}\label{crrfs}

Now, we shall se how to construct a real rational function with real critical points from a given real admissible gaph.

Let $\Gamma\subset\CPu$  be a degree $d$ real admissible graph.

\emph{Jordan-Schönflies theorem} provides a system of face embeddings, $\amalg \iota_k$, for $\Gamma\subset\CPu$ such that:
\begin{itemize}
\item[$(i)$]{ for each face $G_k \in F(\Gamma)$, for $k=1, 2, \cdots, 2d$, $\iota_k(\partial G_k)=\rr$;}
\item[$(ii)$]{$\overline{\iota_k (z)}=\iota_l (\bar{z})\quad\mbox{for all}\quad z\in |{\Gamma}|=\CPu.$}
\end{itemize}


As a result, from Corolary $\ref{bcfromg2}$, $\amalg \iota_k$ determines a rational function $F:\CPu\rightarrow \CPu$ for which holds $\overline{F(z)}=F(\overline{z})$.

Hence, we have proved that:
\begin{thm}\label{realbcfromg2}
For each real admissible graph $\Gamma$ there exist a holomorphic branched cover $\CPu\rightarrow{\CPu}$ having $\Gamma$ as a pullback graph and satisfying the identity 
\[\overline{F(z)}=F(\bar{z})\]
for all $z\in\cc$.
\end{thm}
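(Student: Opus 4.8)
The plan is to carry out the construction of Section~\ref{bcconstruction} in a way that is equivariant under the complex conjugation $\sigma\colon z\mapsto\bar z$, and then to feed the resulting symmetric system of face embeddings into Corollary~\ref{bcfromg2}. The first observation I would record is that, since $\Gamma$ is a real admissible graph (Definition~\ref{rgb-g}), its $1$-skeleton contains $\RPu$; hence no face of $\Gamma$ meets $\RPu$ in its interior, so every face lies entirely in the open upper or open lower half-plane. Consequently $\sigma$ acts as a free involution on $F(\Gamma)$, pairing each upper face with a lower face. Because $\sigma$ reverses orientation it exchanges the two sides of every oriented edge, so it is also color-swapping: it carries each \textcolor{deeppink}{A} face to a \textcolor{blue}{B} face and vice versa. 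This is exactly the compatibility needed with the gluing conditions~\ref{constr}, once I take the distinguished cell $\Omega$ to be the closed upper half-plane, whose complement $\s^2-\mathrm{int}(\Omega)$ is the closed lower half-plane and is therefore the $\sigma$-image of $\Omega$.

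Next I would build the embeddings only on the upper faces. Using the Jordan--Schönflies theorem I embed the closure of each upper (\textcolor{deeppink}{A}) face onto $\Omega$ and glue them along their shared saddle-connections so that conditions (g.1)--(g.3) of~\ref{constr} hold and each boundary is sent onto $\rr=\RPu$. I then \emph{define} the embedding of a lower face $G_l=\sigma(G_k)$ by the formula $\iota_l(z):=\overline{\iota_k(\bar z)}$. One checks that this prescription is consistent along $\RPu$ (the fixed locus of $\sigma$), so the two half-systems agree on the common real saddle-connections, and that the full family $\amalg\iota_k$ again satisfies (g.1)--(g.3) while obeying the symmetry $\overline{\iota_k(z)}=\iota_l(\bar z)$ for all $z\in|\Gamma|=\CPu$; these are precisely properties (i) and (ii) recorded before the theorem.

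With the symmetric system in hand, Theorem~\ref{bcfromg02} produces a topological branched cover $f\colon\CPu\to\s^2$ having $\Gamma$ as pullback graph, and the defining symmetry of the embeddings gives $f\circ\sigma=\sigma\circ f$ on the nose. To pass to a holomorphic model while keeping the symmetry, I would normalize the uniformization of the target at three \emph{real} corners $\alpha,\beta,\gamma\in\RPu$, so that the complex structure $\mu\colon(\s^2,f(C))\to\CPu$ commutes with $\sigma$ (the image curve $\Sigma=\RPu$ is $\sigma$-invariant, and an orientation-reversing involution fixing a circle is conjugate to standard conjugation). Since $f$ is $\sigma$-equivariant, the pulled-back structure $\nu=f^{\ast}\mu$ on $\CPu$ is $\sigma$-invariant, so its uniformizing map may be taken to commute with $\sigma$ as well; the composite $F=\mu\circ f\circ\nu^{-1}$ is then holomorphic and satisfies $F\circ\sigma=\sigma\circ F$, i.e. $\overline{F(z)}=F(\bar z)$, while Corollary~\ref{bcfromg2} guarantees $\Gamma$ is its pullback graph. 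The main obstacle I anticipate is this last equivariance of the complex structure: one must verify that $\sigma$ can be normalized to standard conjugation simultaneously on source and target, so that the holomorphic (not merely topological) map inherits the reality identity. A clean alternative is to observe that $\tilde F(z):=\overline{F(\bar z)}$ is again a rational map of the same degree, and that $F$ carries the real cycle into $\Sigma=\RPu$, so $\tilde F$ and $F$ agree along $\RPu$; by the identity principle $\tilde F=F$, which is the asserted identity.
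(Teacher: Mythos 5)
Your proposal is correct and takes essentially the same route as the paper: you construct a conjugation-symmetric system of face embeddings satisfying exactly the properties $(i)$ and $(ii)$ recorded before the theorem ($\iota_k(\partial G_k)=\rr$ and $\overline{\iota_k(z)}=\iota_l(\bar z)$) and then invoke Corollary~\ref{bcfromg2}, which is precisely the paper's argument. Your closing identity-principle step, comparing $F(z)$ with $\overline{F(\bar z)}$ along the real cycle, coincides with how the author derives the identity $\overline{F(z)}=F(\bar z)$; your additional care with the equivariance of the uniformizing maps merely fills in details the paper leaves implicit.
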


\begin{lem}\label{lem-ratreal}
Given pollynomials $A, B, C, D\in \C[z]$ such that 
\[\dfrac{A}{B}=\dfrac{C}{D} \in \C(z)-\C\]
than there are $k\in \C-\{0\}$ such that 
\[A=k\cdot C\quad\text{and}\quad B=k\cdot{}D.\] 
\end{lem}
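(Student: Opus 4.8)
The plan is to reduce the equality of rational functions to an identity in the polynomial ring $\C[z]$ and then exploit unique factorization. First I would clear denominators: since $\C[z]$ is an integral domain, equality of the fractions $A/B = C/D$ in its field of fractions $\C(z)$ means, by the very definition of equality in a fraction field, the polynomial identity $AD = BC$. Because the common value is assumed non-constant, in particular it is a nonzero element of $\C(z)$, so none of $A, B, C, D$ is the zero polynomial.

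The essential point — and the step where a hypothesis must be supplied — is coprimality. I would invoke the standing convention fixed in the introduction (each rational function is written as a quotient of coprime polynomials) to assume $\gcd(A,B) = 1$ and $\gcd(C,D) = 1$. This is genuinely necessary: without reduced representatives the conclusion fails, as $z^2/z = z/1$ shows. Granting it, the argument is short. From $AD = BC$ we get $A \mid BC$, so Euclid's lemma in the UFD $\C[z]$ together with $\gcd(A,B)=1$ yields $A \mid C$; symmetrically, $C \mid AD$ together with $\gcd(C,D)=1$ yields $C \mid A$. Hence $A$ and $C$ are associates, and since the units of $\C[z]$ are exactly the nonzero constants, there is $k \in \C - \{0\}$ with $A = k\,C$.

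It remains to verify that the \emph{same} constant $k$ governs the denominators. Substituting $A = k\,C$ into $AD = BC$ gives $k\,CD = BC$, that is $C\,(kD - B) = 0$ in the integral domain $\C[z]$; as $C \neq 0$, this forces $B = k\,D$. This closes the proof. No serious obstacle arises once coprimality is in force — the only thing to watch is to propagate the single scalar $k$ through both conclusions rather than introducing two a priori distinct constants, which the substitution above arranges automatically.
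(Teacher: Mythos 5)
Your proof is correct, and it takes a genuinely different route from the paper's. The paper argues function-theoretically: it views $f = A/B$ and $g = C/D$ as maps of the Riemann sphere, observes that they have the same zeros and poles with the same multiplicities (multiplicities being local degrees), concludes via the fundamental theorem of algebra that $A = k_1 C$ and $B = k_2 D$ for two a priori distinct constants, and only then uses the equality of the fractions to force $k_1 = k_2$. You instead work purely algebraically in the PID $\C[z]$: cross-multiplying to $AD = BC$, applying Euclid's lemma twice to get that $A$ and $C$ are associates, and propagating the single unit $k$ to the denominators by substitution and cancellation in an integral domain. Your route buys generality and robustness: it works verbatim over any field (in particular over $\R$, which is the setting of Proposition~$\ref{lem_relfunc}$ where the lemma is applied), whereas the paper's argument leans on $\C$ being algebraically closed so that zeros with multiplicity determine a polynomial up to scalar; your substitution step also obtains one constant directly rather than reconciling two. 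Most valuably, you make explicit what both arguments silently need: the lemma as stated is false without reduced representatives (your $z^2/z = z/1$ counterexample), and the paper's proof implicitly assumes coprimality too when it passes from ``$f$ and $g$ have the same zeros'' to ``$A$ and $C$ have the same zeros.'' The paper's statement omits this hypothesis, so your explicit appeal to the coprimality convention from the introduction repairs a genuine gap in the paper's formulation rather than introducing an extraneous assumption.
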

\begin{proof}
The zeros and poles of $f(z):=\dfrac{A}{B}$ and $g(z):=\dfrac{C}{D}$ are the same and with the same multiplicity, since they are the local degree of the two maps $f$ and $g$.  

Hence ${A}$ and ${C}$ as well as $B$ and $D$ has the same zeros with the same multiplicity, then \[A = k_1\cdot{}C\quad\text{and}\quad B = k_2\cdot D\] for some $k_1, k_2 \in\C - \{0\}$. But, $\dfrac{A}{B}=\dfrac{C}{D}$, thus $k_1 = k_2$.
\end{proof}

\begin{prop}\label{lem_relfunc}
A meromorphic function $F : {\CPu} \rightarrow {\CPu}$ satisfying for all $z \in {\CPu}$ the identity $\overline{F_{}(z)}=F(\bar{z})$ is a quotient of two polynomials with real coefficients.
\end{prop}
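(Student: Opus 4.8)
The plan is to reduce the statement to an elementary fact about the coefficients of $F$, written as a ratio of coprime polynomials, and then to invoke Lemma $\ref{lem-ratreal}$. Since $F$ is meromorphic on $\CPu$ it is a rational function, so I would write $F = P/Q$ with $P, Q \in \C[z]$ coprime. If $F$ were constant the identity $\overline{F(z)} = F(\bar z)$ would immediately force that constant to be real, so I may assume $F \in \C(z) - \C$. For a polynomial $P(z) = \sum_j a_j z^j$ it is convenient to introduce its \emph{coefficient-conjugate} $P^{\ast}(z) := \sum_j \overline{a_j}\, z^j$; a one-line computation gives the identity $\overline{P(\bar z)} = P^{\ast}(z)$ valid for every $z \in \C$, together with the involutivity $(P^{\ast})^{\ast} = P$.

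Next I would translate the hypothesis into a statement about $P$ and $Q$. Replacing $z$ by $\bar z$ in $\overline{F(z)} = F(\bar z)$ and taking conjugates yields the equivalent identity $F(z) = \overline{F(\bar z)}$ for all $z$. Evaluating the right-hand side with the coprime representation and using the identity above gives $\overline{F(\bar z)} = P^{\ast}(z)/Q^{\ast}(z)$, so the hypothesis becomes the equality of \emph{non-constant} rational functions $P/Q = P^{\ast}/Q^{\ast}$. At this point Lemma $\ref{lem-ratreal}$ applies directly and furnishes a \emph{single} constant $k \in \C - \{0\}$ with $P^{\ast} = k\,P$ and $Q^{\ast} = k\,Q$. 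The decisive feature, which I would emphasize, is that the \emph{same} $k$ governs both numerator and denominator; this is exactly what will permit a common rescaling.

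The step I expect to require the most care is extracting from $k$ a unimodular factor that makes $P$ and $Q$ real simultaneously. Applying $(\cdot)^{\ast}$ to $P^{\ast} = kP$ and using $(P^{\ast})^{\ast} = P$ gives $P = \overline{k}\,P^{\ast} = |k|^2 P$, hence $|k| = 1$; write $k = e^{i\theta}$. I would then set $\lambda := e^{i\theta/2}$, so that $\lambda/\overline{\lambda} = e^{i\theta} = k$. Computing $(\lambda P)^{\ast} = \overline{\lambda}\,P^{\ast} = \overline{\lambda}\,k\,P = \lambda P$ shows that $\lambda P$ has real coefficients, and since $Q^{\ast} = kQ$ carries the identical constant $k$, the very same $\lambda$ gives $(\lambda Q)^{\ast} = \lambda Q$, so $\lambda Q$ is real as well. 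Therefore
\[
F = \frac{P}{Q} = \frac{\lambda P}{\lambda Q}
\]
exhibits $F$ as a quotient of two polynomials with real coefficients, which is the claim. The only genuine obstacle is the simultaneity of the rescaling, and that is resolved entirely by the fact that Lemma $\ref{lem-ratreal}$ returns one common proportionality constant rather than two independent ones.
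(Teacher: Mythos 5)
Your proposal is correct and takes essentially the same route as the paper: write $F = P/Q$, observe that the hypothesis amounts to $P/Q = P^{\ast}/Q^{\ast}$ where $P^{\ast}$ is the coefficient-conjugate, and invoke Lemma~\ref{lem-ratreal} to obtain a single proportionality constant. Your extra step --- deducing $|k|=1$ and rescaling both $P$ and $Q$ by $\lambda = e^{i\theta/2}$ --- is in fact more careful than the paper's own proof, which passes from the lemma directly to the assertion that the coefficients of $P$ and $\overline{P(\overline{z})}$ are equal, tacitly taking $k=1$ (which is only guaranteed after a normalization, e.g.\ taking $Q$ monic).
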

\begin{proof}
Let $F(z)=\dfrac{P(z)}{Q(z)}$. First, notice that for a non-constante rational fraction $f\in\C(z)$, the new one $F(z)=\overline{f(\overline{z})}$, 
is obtained by taking simply the complex conjugates of the coefficients of $f$.

Then, the relation $\overline{F(z)}=F(\bar{z})$ together the Lemma $\ref{lem-ratreal}$ implies that the coeficcients of $P(z)$ and $\overline{P(\overline{z})}$ are equals, in consequence $P$ is a polynomial with real coefficientes. We conclude same about $Q(z)$. Therefore, are real numbers all coefficients of Q.  
 

\end{proof}

The corollary below is straightforward from Theorem $\ref{realbcfromg2}$ and Proposition $\ref{lem_relfunc}$.

\begin{cor}\label{r_a_gb_r_r_f}
For each admissible real graph ${\Gamma}$ 
 there exist a \emph{real rational function} having ${\Gamma}$ as a pullback graph relative to the 
 postcritical curve $\RPu$.
\end{cor}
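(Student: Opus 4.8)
The plan is to chain together the two results that immediately precede the statement, since the substantive construction has already been carried out upstream. First I would invoke Theorem \ref{realbcfromg2}: given the real admissible graph $\Gamma\subset\CPu$, the system of face embeddings $\amalg \iota_k$ supplied by the Jordan--Sch\"onflies theorem (subject to the two symmetry constraints $\iota_k(\partial G_k)=\rr$ and $\overline{\iota_k(z)}=\iota_l(\bar z)$) produces a holomorphic branched cover $F:\CPu\rightarrow\CPu$ for which $\Gamma$ is a pullback graph relative to the postcritical curve $\RPu$, and which moreover satisfies the conjugation identity $\overline{F(z)}=F(\bar z)$ for all $z\in\cc$. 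This is exactly the output of Corollary \ref{bcfromg2} performed with the symmetric embeddings, so nothing new is needed here.

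Next I would feed this $F$ into Proposition \ref{lem_relfunc}, which asserts that any meromorphic self-map of $\CPu$ obeying $\overline{F(z)}=F(\bar z)$ is a quotient of two polynomials with real coefficients. Hence $F$ is precisely a \emph{real rational function} in the sense defined at the head of this section, and by construction its pullback graph with respect to $\RPu$ is $\Gamma$. Since the vertices of a real admissible graph lie on $\RPu$ (by Definition \ref{rgb-g}) and the critical points of $F$ are exactly the corners, i.e.\ the vertices of valence strictly greater than $2$, of $\Gamma$, the critical points are automatically real; this is the feature that makes $F$ a real rational function with real critical points.

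I do not anticipate a genuine obstacle, as the corollary is a direct composition of Theorem \ref{realbcfromg2} and Proposition \ref{lem_relfunc}. The only point worth a line of care is confirming that the two formulations of ``real rational function'' coincide: the definition at the start of the section (a quotient of polynomials with real coefficients) is literally the conclusion of Proposition \ref{lem_relfunc}, so the identification is immediate and no appeal to Lemma \ref{lem-ratreal} beyond its use inside that proposition is required. For completeness I would also note that $F$ is non-constant, equivalently that $\Gamma$ has degree $d\geq 1$, so that the pullback $F^{-1}(\RPu)$ genuinely recovers $\Gamma$ and the resulting function is not trivial.
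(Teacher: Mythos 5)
Your proposal is correct and follows exactly the paper's own route: the paper derives this corollary directly by combining Theorem \ref{realbcfromg2} (the symmetric branched cover $F$ with $\overline{F(z)}=F(\bar z)$ and pullback graph $\Gamma$) with Proposition \ref{lem_relfunc} (such an $F$ is a quotient of real-coefficient polynomials). Your additional remarks on the reality of the critical points and the non-constancy of $F$ are harmless elaborations beyond what the paper records.
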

Now, we drive our attention to the fact that a given real rational function 
can have a non-real pullback graph. 
For a given rational function, the pullback graph depends on the isotopy type of 
the chosen post-critical curve. 
Here goes some examples:

\begin{ex}\label{isotopy-real-map}
{Some differents post-critical curves for 
\begin{eqnarray*}
f(z) = {\frac{{\frac{1}{2} \left(3-\sqrt{7}\right) z^3+\left(\sqrt{7}-2\right) z^2}}{\left(\frac{1}{2} \left(\sqrt{7}-3\right)+2\right) z-1}}
\end{eqnarray*} and its respectives pullback graphs. The critical points of $f$ are $-2,0,1$ {and} $\infty$.}

\vspace{-.5cm}
\begin{figure}[H]
    \begin{center}
        \subfloat[{pullback graph}\hspace{.5cm} $\Gamma(f,\R)$ / post-critcal curve $\R$]
    {{\includegraphics[width=3.cm]{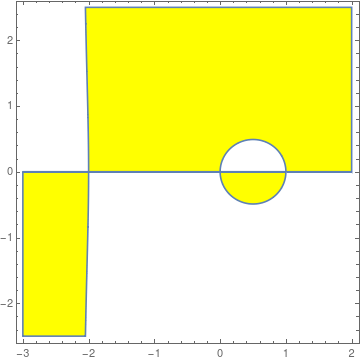} 
         \hspace{-.125cm}
    \includegraphics[width=3.cm]{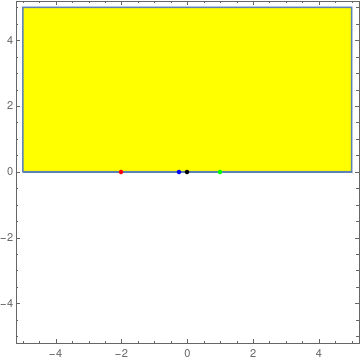} }} \hfill
     \subfloat[pullback graph $\Gamma(f,\Sigma_1)$ / post-critcal curve $\Sigma_1$] 
    {{\includegraphics[width=3.cm]{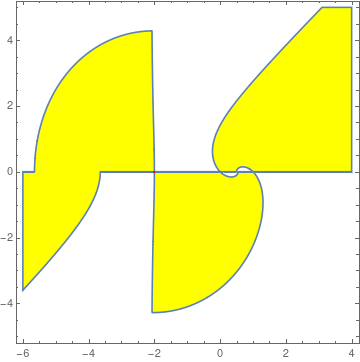}  \hspace{-.24cm} 
    \includegraphics[width=3.cm]{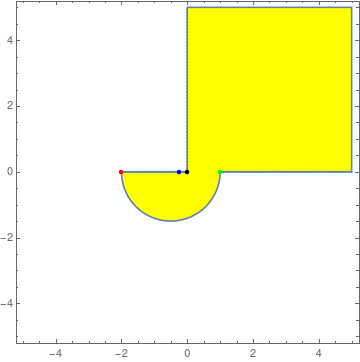} }} \hfill
     \subfloat[pullback graph $\Gamma(f,\Sigma_2)$ / post-critcal curve $\Sigma_2$]
    {{\includegraphics[width=3.cm]{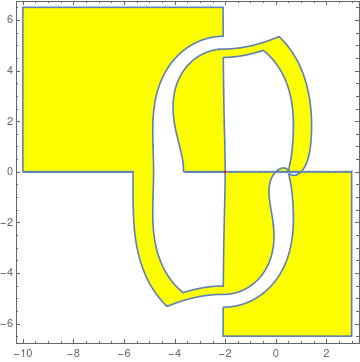}    \hspace{-.15cm}
    \includegraphics[width=3.cm]{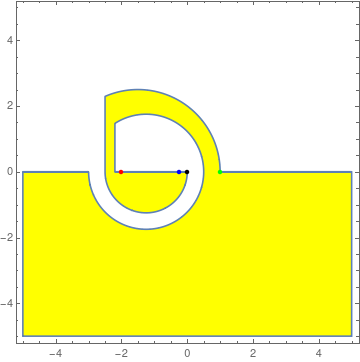} }}
    
    \vspace{-.25cm}
    \caption{real globally balanced graphs}
    \end{center}
      \label{fig:h12pullb01}	
\end{figure}

\vspace{-.5cm}
\end{ex}

\begin{lem}\label{generic-adm}
Any simple real globally balanced graph admits at least one generic admissible vertex labelling.
\end{lem}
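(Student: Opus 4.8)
The plan is to write the labelling down by hand, exploiting the rigid shape of a simple real globally balanced graph, and then to verify admissibility. Recall that such a $\Gamma$ has exactly $2d-2$ corners, all of valence $4$ and all lying on the real cycle $\RPu$; removing $\RPu$ from the $1$-skeleton leaves in each complementary half-plane a system of arcs, and the lower system is the complex conjugate of the upper one. Since every corner is four-valent and meets $\RPu$ along two edges, it is the endpoint of exactly one arc in $\mathbb{H}^{u}$, so the upper arcs form a non-crossing perfect matching of the corners. First I would enumerate the corners $c_1,\dots,c_{2d-2}$ in the cyclic order in which $\RPu$ meets them and set $N(c_i):=i$. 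This already assigns pairwise distinct labels to the corners, and every corner has valence $4$; hence any extension of $N$ to an admissible labelling of an enrichment of $\Gamma$ is automatically generic in the sense of Definition $\ref{adm-g}$, and the whole problem collapses to extending $N$ across suitably inserted $2$-valent vertices so that Definition $\ref{adm-v-l}$ holds with $m=2d-2$.

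Next I would dispose of the arithmetic condition (2) of Definition $\ref{adm-v-l}$. Exactly as in the construction of $\ref{bcconstruction}$, the quantity $\sum_k \deg(v_j^k)/2$ counts, with multiplicity, the incidences between the pink faces and the vertices carrying the label $j$; since $\Gamma$ is cellular no vertex meets a face twice, so once condition (1) is known each of the $d$ pink faces carries the label $j$ exactly once and (2) follows formally. It therefore suffices to realise condition (1). Orienting every edge so that a pink face lies on its left, condition (1) is equivalent to asking that $N$ increase by exactly $1$ modulo $2d-2$ along every oriented edge; this is the coboundary/potential reformulation already used in the proof of Lemma $\ref{lththm}$.

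The heart of the argument is to verify that the prescribed corner values $N(c_i)=i$ are compatible with such a unit-increment potential, and here the two structural features of $\Gamma$ play complementary roles. Planarity does the local work: because the upper (hence also the lower) arcs do not cross, the corners incident to any one face are met, as its boundary is traversed with the preferred colour on the left, in a cyclically monotone order along $\RPu$; thus on each face the prescribed labels are already cyclically ordered, and the missing labels can be filled in, in increasing order, by subdividing the boundary edges with $2$-valent vertices. The one remaining point is that a $2$-valent vertex placed on an edge shared by two faces must receive the same label from each side, equivalently that the two positive saddle-paths joining a given pair of corners prescribe the same number of subdivision points. I would package this precisely as in the proof of Lemma $\ref{lththm}$: these counts are the input and output values of a multi-extremal weighted graph built from the bipartite dual of $\Gamma$ with constant capacity $m=2d-2$, and Proposition $\ref{vertex-capacity-fn}$ forces them to agree. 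Carrying the subdivisions out in complex-conjugate pairs keeps the enriched graph invariant under $z\mapsto\bar z$, so the generic admissible graph produced is again real.

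The step I expect to be the main obstacle is exactly this global well-definedness: planarity only guarantees monotonicity face by face, and one must rule out a monodromy defect when the labels induced on a shared subdivided edge are compared from its two sides. Charge conservation (Proposition $\ref{vertex-capacity-fn}$) is the elementary substitute for Thurston's appeal to $H^1(\s^2;\mathbb{Z}/m)=0$ that removes it, while the reflection symmetry is needed only to make the enrichment, and therefore the resulting admissible graph, real. Everything else — distinctness of the corner labels, the valence-$4$ condition, and the reduction of (2) to (1) — is immediate from the hypotheses, so that this route needs neither an appeal to local balancedness nor the full strength of Theorem $\ref{gbg-adm}$.
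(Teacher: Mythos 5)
Your construction breaks at its central claim, and in fact has a counterexample. Once you set $N(c_i)=i$ in cyclic order along $\RPu$ and demand unit increments mod $m=2d-2$ along every edge oriented with colour A on the left, each face is forced to read its corner labels in \emph{increasing} cyclic order when its boundary is traversed in that positive direction: if the cyclic label sequence has winding $w$ around $\Z/m\Z$, the face needs exactly $wm$ boundary vertices after subdivision, and admissibility requires $m$, so $w=1$ is mandatory. Non-crossing of the arcs (convexity of the faces of a chord diagram) gives monotonicity only for the \emph{counterclockwise} traversal of each upper face; but ``A on the left'' is counterclockwise only around A faces and clockwise around B faces, where your labels run decreasingly, giving winding $k-1\geq 2$ for any face with $k\geq 3$ corners. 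One cannot escape by choosing the other alternating coloring: take $d=4$, corners $c_1<\cdots<c_6$, and the nested (rainbow) matching with upper arcs $c_1c_6$, $c_2c_5$, $c_3c_4$ and their conjugates. This is a simple standard real globally balanced graph, and the two upper quadrilateral faces, with corners $\{c_1,c_2,c_5,c_6\}$ and $\{c_2,c_3,c_4,c_5\}$, share the arc $c_2c_5$, so in either coloring one of them is a B face; reading its labels clockwise gives, say, $(5,4,3,2)$, whose increments mod $6$ sum to $18=3m$. Hence $N(c_i)=i$ extends under neither coloring. A generic admissible labelling does exist for this graph — e.g.\ $(c_1,\ldots,c_6)\mapsto(6,1,5,4,2,3)$ satisfies both face conditions — but it is necessarily non-monotone along $\RPu$, so the lemma cannot be proved by prescribing cyclic-position corner labels.

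There are two secondary misdiagnoses worth flagging. First, the shared-edge conflict you single out as ``the main obstacle'' is actually vacuous in your formulation: an edge carries a single orientation (A on the left), both incident faces see the same oriented edge with the same endpoint labels, hence prescribe the same subdivision count; the genuine global constraint is the per-face total of $m$ vertices, which is exactly where the construction fails. Second, Proposition $\ref{vertex-capacity-fn}$ is a conservation statement about a \emph{given} feasible weighting; in the proof of Lemma $\ref{lththm}$ it certifies consistency of a labelling propagated across an enrichment that already has $m$ vertices per face — and that enrichment is produced by a perfect matching whose existence is precisely where Hall's theorem, and hence local balancedness, enters. So the feature you advertise — avoiding local balancedness and Theorem $\ref{gbg-adm}$ — is exactly what cannot be avoided. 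The paper's own proof is short for this reason: it applies Theorem $\textrm{\ref{teo-a}}$ (with Theorem $\textrm{\ref{teo-c}}$ supplying local balance of real globally balanced graphs) to obtain an admissible labelling from a perfect matching, and then perturbs the matching to make the corner labels pairwise distinct, i.e.\ generic.
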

\begin{proof}
Let $\Gamma\subset\CPu$ be simple real globally balanced graph.
Due Theorem $\textrm{\ref{teo-a}}$, $\Gamma$ admits an admissible vertex labelling over it (one for each perfect match on it as presented in $\ref{s}$, up to cyclic permutations). 
A perfect match may not produce generic labeling. In this case, we can perturb the matching to produce a new enriched graph whose labels should be pair-wise distinct (see a illustration below).
\end{proof}

\begin{lem}\label{non-istop}
For given two non-isotopic {generic real admissible graphs}, all with vertex set $\{c_1, c_2 , \cdots ,c_{m}\}\subset{\RPu}$, the corresponding real rational functions from Corollary $\ref{r_a_gb_r_r_f}$ are not equivalent. 
\end{lem}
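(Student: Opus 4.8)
The plan is to argue by contraposition: assuming the two real rational functions are equivalent, I will show that the two admissible graphs must be isotopic. Denote by $F$ and $G$ the real rational functions produced from the generic real admissible graphs $\mathcal{G}$ and $\mathcal{H}$ via Corollary $\ref{r_a_gb_r_r_f}$, so that $\mathcal{G}$ is the isotopy class of the pullback graph $F^{-1}(\RPu)$ and $\mathcal{H}$ is that of $G^{-1}(\RPu)$, both taken relative to the common critical set $\{c_1,\dots,c_m\}\subset\RPu$ (post-composition by an automorphism does not move critical points, so the two functions share their critical set). Suppose, for contradiction, that there is an automorphism $\alpha\in\mathrm{Aut}(\CPu)$ with $G=\alpha\circ F$.

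The central step is to prove that $\alpha$ necessarily commutes with complex conjugation, hence preserves $\RPu$. Both functions satisfy the reality identity $\overline{F(z)}=F(\bar z)$ and $\overline{G(z)}=G(\bar z)$ furnished by Theorem $\ref{realbcfromg2}$. Substituting $G=\alpha\circ F$ and comparing the two ways of computing $G(\bar z)$ yields
\[
\alpha\big(\overline{F(z)}\big)=\overline{\alpha\big(F(z)\big)}\qquad\text{for all }z\in\CPu .
\]
Because $F$ is a branched covering it is surjective, so $F(z)$ ranges over all of $\CPu$ and the identity $\alpha(\bar w)=\overline{\alpha(w)}$ holds for every $w$. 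Representing $\alpha$ by a matrix $M=\left(\begin{smallmatrix}a&b\\c&d\end{smallmatrix}\right)\in\mathrm{GL}(2,\C)$ and equating the two resulting Möbius transformations forces $\overline{M}=\lambda M$ for some $\lambda\in\C^{\ast}$; taking determinants gives $\overline{\det M}=\lambda^2\det M$, whence $|\lambda|=1$. Choosing $\mu$ with $\mu/\overline{\mu}=\lambda$ (possible since $|\lambda|=1$) makes $\mu M$ a real matrix, so $\alpha$ is a real Möbius transformation and $\alpha(\RPu)=\RPu$.

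Once $\alpha(\RPu)=\RPu$ is established the conclusion is immediate:
\[
G^{-1}(\RPu)=(\alpha\circ F)^{-1}(\RPu)=F^{-1}\big(\alpha^{-1}(\RPu)\big)=F^{-1}(\RPu).
\]
Thus the pullback graphs of $G$ and $F$ coincide as subsets of the domain $\CPu$, so $\mathcal{H}$ and $\mathcal{G}$ share a representative and therefore lie in the same isotopy class relative to $\{c_1,\dots,c_m\}$; that is, the two graphs are isotopic, contradicting the hypothesis. Hence $F$ and $G$ cannot be equivalent.

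I expect the reality of $\alpha$ to be the only genuine obstacle: everything else is formal manipulation of pullbacks together with the definition of the pullback graph as an isotopy class (Definition $\ref{pullbackg}$). The delicate part of that step is passing from the pointwise identity on the image of $F$ to the identity $\alpha(\bar w)=\overline{\alpha(w)}$ on all of $\CPu$, which uses surjectivity of the branched covering $F$, and then extracting genuine reality of the coefficients of $\alpha$ up to a common scalar, which is where the modulus-one determinant bookkeeping enters.
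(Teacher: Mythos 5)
Your proof is correct, and its skeleton is the same as the paper's: assume $G=\alpha\circ F$ for some $\alpha\in\mathrm{Aut}(\CPu)$, show $\alpha(\RPu)=\RPu$, pull back to conclude $G^{-1}(\RPu)=F^{-1}(\RPu)$, and contradict non-isotopy. The single point of divergence is how the invariance of $\RPu$ under $\alpha$ is obtained. The paper does it in one line by asserting $\RPu=f(\RPu)=g(\RPu)$, so that $\sigma(\RPu)=\RPu$ is immediate; you instead derive the equivariance $\alpha(\bar w)=\overline{\alpha(w)}$ on all of $\CPu$ from the reflection identities of Theorem~$\ref{realbcfromg2}$ together with surjectivity of the branched covering $F$, and then extract a real matrix representative via the bookkeeping $\overline{M}=\lambda M$, $|\lambda|=1$, rescaling by $\mu$ with $\mu/\overline{\mu}=\lambda$ (all of which checks out). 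Your detour is longer but in fact more careful than the paper's: the claim $f(\RPu)=\RPu$ is not literally true for every real rational function with real critical points --- for $f(z)=z^2$, which is generic of degree $2$ with critical set $\{0,\infty\}\subset\RPu$, the image of $\RPu$ is only the arc $[0,\infty]$ --- whereas your argument never needs surjectivity of $f|_{\RPu}$. (The paper's step is easily repaired without your machinery: $\sigma$ maps the nondegenerate continuum $f(\RPu)\subset\RPu$ into $\RPu$, Möbius transformations take circles to circles, and three points determine a circle, so $\sigma(\RPu)=\RPu$ follows anyway.) The two routes meet at the same characterization in the end, since a Möbius transformation preserves $\RPu$ exactly when it commutes with complex conjugation, i.e., admits a representative in $\mathrm{GL}(2,\R)$; your closing step identifying the two pullback graphs as sets and hence as isotopy classes relative to the common critical set matches the paper verbatim.
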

\begin{proof}
Let $f$ and $g$ be that corresponding real rational functions. By assumption there is $\sigma\in Aut(\CPu)$ such that $g = \sigma\circ{}f$. Since ${\RPu} = f({\RPu}) = g({\RPu})$ then $\sigma({\RPu}) = {\RPu}$. Therefore, $\Gamma_{g} = g^{-1}({\RPu}) = f^{-1}(\sigma^{-1}({\RPu})) = f^{-1}({\RPu}) = \Gamma_{f}$. But this contradicts that $\Gamma_{g}$ and  $\Gamma_{f}$ are non-isotopic. 
\end{proof}

Now we shall see that every globally balanced graph is also locally balanced. 

\subsection{Local balancedness of real globally balanced graphs}\label{gbg-lbg}
Initially, we shall prove that any simple globally balanced graph satisfies the local balance condition.

\begin{mnthm}{C}[]\label{teo-c}
Every \emph{globally balanced real graphs} is \emph{locally balanced}. 
\end{mnthm} 
\begin{proof}

Let $\Gamma\subset{\CPu}$ be a simple standard globally balanced real graph with a \textcolor{deeppink}{A}-\textcolor{blue}{B} alternating face coloring and $\gamma$ a positive cycle of $\Gamma$. $\textcolor{deeppink}{A}_{\gamma}$ and $\textcolor{blue}{B}_{\gamma}$ are the numbers of \textcolor{deeppink}{A} faces and \textcolor{blue}{B} faces inside $\gamma$. 

Being $\Gamma$ a real simple globally balanced graph, each face of it have at least one of its boundary edges contained into $\overline{\R}$, we refer to such a kind of edge as real edges. By the alternating property of the face coloring each \textcolor{blue}{B} face 
possesses a companion \textcolor{deeppink}{A} face sharing the same real edges. Since $\gamma$ keeps only \textcolor{deeppink}{A} faces adjacent to its left side, for each \textcolor{blue}{B} face $F_{\textcolor{blue}{B}}$ in the interior of $\gamma$ its companion $\textcolor{deeppink}{A}$ face $F_{\textcolor{deeppink}{A}}$ is also inside $\gamma$. And, for the same reason, must there exist at least one more $\textcolor{deeppink}{A}$ face adjacent to those nonreal edges of those \textcolor{blue}{B} faces inside $\gamma$. Therefore, $\textcolor{deeppink}{A}_{\gamma}\geq \textcolor{blue}{B}_{\gamma}+1$.

We conclude that $\Gamma$ is locally balanced.

\vspace{-.25cm}
\begin{figure}[H]
 \begin{center}
       \subfloat[degree $9$]
    {{\includegraphics[width=3cm]{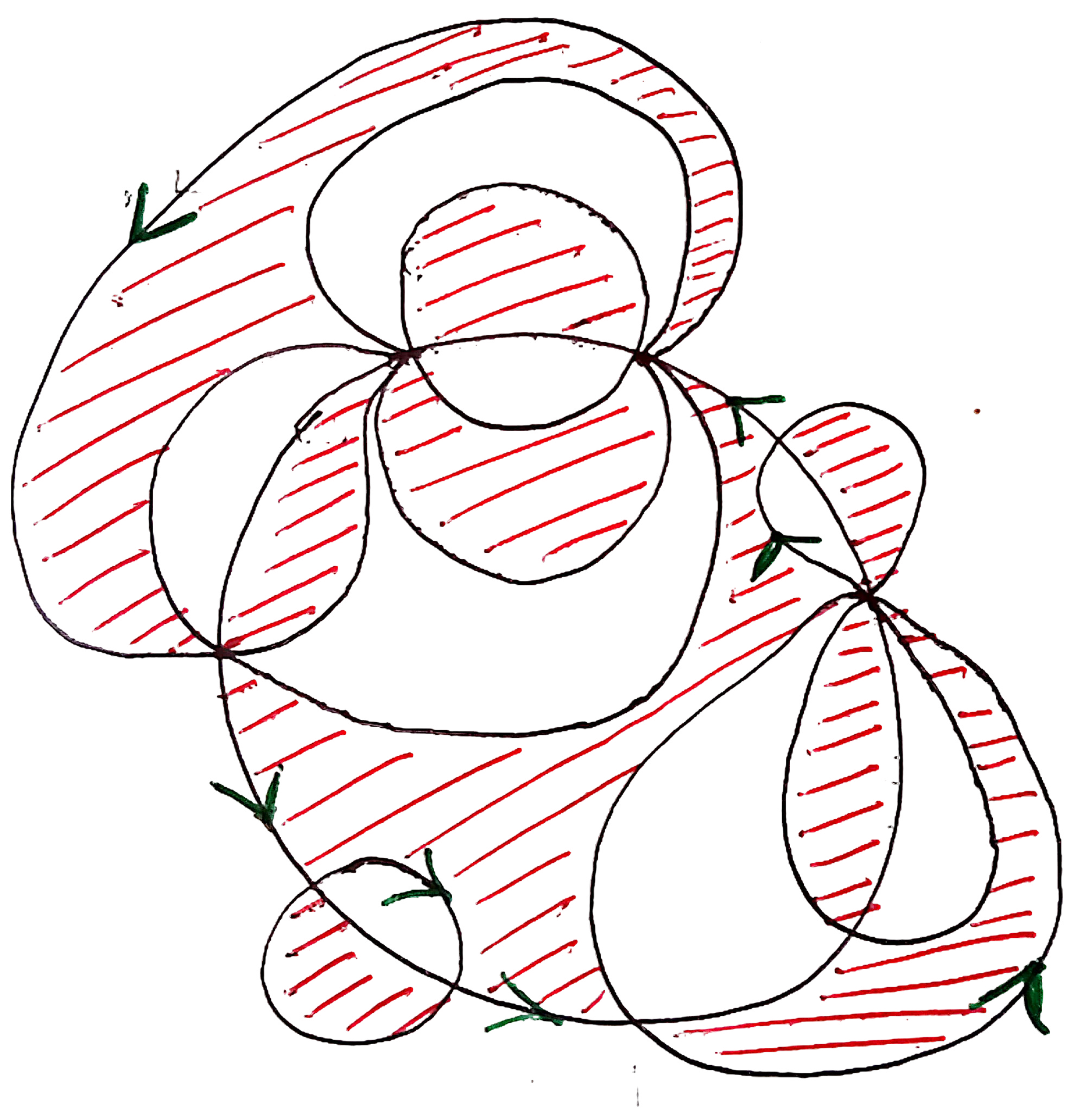}}}
        \qquad  \qquad  \qquad
    \subfloat[degree $8$ ]
    {{\includegraphics[width=3cm, height=3cm]{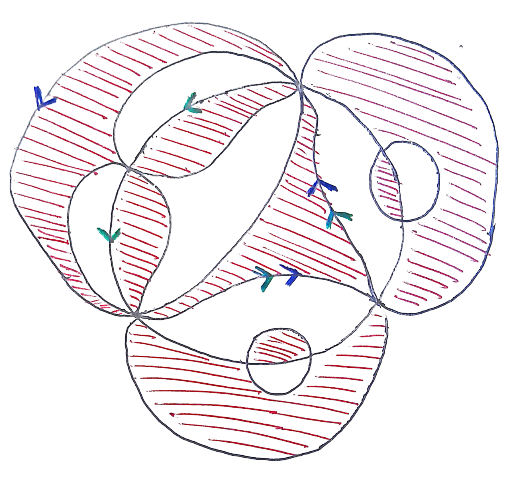} }}
      \qquad  \qquad  \qquad
    \subfloat[degree $6$]
    {{\includegraphics[width=3cm]{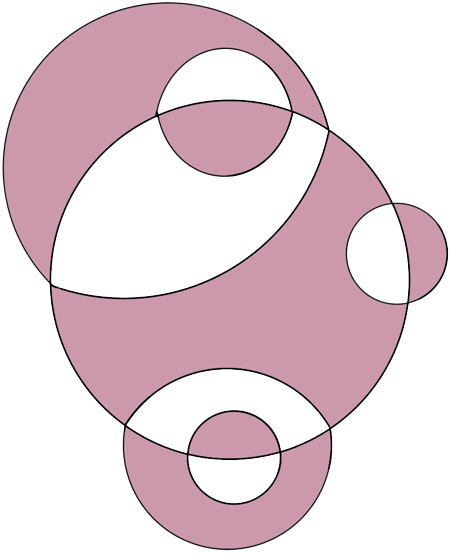} }}
  
  \vspace{-.25cm}
    \caption{real globally balanced graphs}
\end{center}\end{figure}
  
  \vspace{-.5cm}

If $\Gamma$ is standard non-simple $\gamma$, a positive cycle, may contains \textcolor{blue}{B} faces that are not incident to the real cycle (but with all their corners in the real cycle). Then, each such a \textcolor{blue}{B} face demands a $\textcolor{deeppink}{A}$ face to separate it from $\gamma$. And due to the alternation of the face coloring it must exist at leas one more $\textcolor{deeppink}{A}$ face to separate those \textcolor{blue}{B} faces inside $\gamma$, therefore $\textcolor{deeppink}{A}_{\gamma}\geq \textcolor{blue}{B}_{\gamma}+1$. 

The graph structure and face coloring of a cell graph are unaffected by an isotopic distortion, hence the results remain true for any real globally balanced graph, and we are done.
\tred{imagem}
     
\end{proof}

Now, Theorem $\textrm{\ref{teo-a}}$ give us the following corollary.

\begin{cor}\label{cor-realgb-is-lg}
For each real globally balanced graph $\Gamma$ with a admissible labelling there is a real rational function with real critical points whose pullback graph relative to $\RPu$ is $\Gamma$.
\end{cor}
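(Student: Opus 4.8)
The plan is to read the conclusion off the real-realization machinery already built in Section~\ref{crrfs}, using Theorem~\ref{teo-c} only to certify that the graph is genuinely balanced. First I would observe that a real globally balanced graph equipped with an admissible labelling is, by Definition~\ref{adm-g}, nothing but a \emph{real admissible graph}; moreover, by Theorem~\ref{teo-c} it is locally balanced, so by Definition~\ref{balance} it is a balanced graph, and Theorem~\ref{teo-a} confirms that it is a pullback graph. The content of the corollary beyond this is that the realizing map can be taken \emph{real} with post-critical curve exactly $\RPu$, so the real structure must be exploited rather than a generic Jordan curve.

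For that I would apply Corollary~\ref{r_a_gb_r_r_f} directly to $\Gamma$. Unwinding its proof: the Jordan--Schönflies theorem supplies a system of face embeddings $\amalg\iota_k$ for $\Gamma\subset\CPu$ with $\iota_k(\partial G_k)=\RPu$ on every face and with the conjugation symmetry $\overline{\iota_k(z)}=\iota_l(\bar z)$. Corollary~\ref{bcfromg2} turns this admissible datum into a holomorphic branched cover $F:\CPu\to\CPu$, and the symmetry of the embeddings propagates to $F$ as the functional identity $\overline{F(z)}=F(\bar z)$ on the $1$-skeleton, hence everywhere by the identity principle. Proposition~\ref{lem_relfunc} (together with Lemma~\ref{lem-ratreal}) then forces $F$ to be a quotient of two real polynomials, i.e. a real rational function.

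It remains to match the two assertions of the statement with this construction. By construction $\partial\Omega$ is identified with $\RPu$ and $F^{-1}(\RPu)=\Gamma$, so $\Gamma$ is precisely the pullback graph of $F$ relative to $\RPu$; in particular the given admissible labelling is the one that produced $F$, so nothing is lost in passing from the datum to the map. The critical points of $F$ are the corners of $\Gamma$, i.e. the vertices of valence strictly greater than $2$, as recorded by the local-degree computation of Section~\ref{bcconstruction}; since $\Gamma$ is real it is isotopic to a standard real graph whose vertex set lies in $\RPu$, whence the corners, and therefore the critical points of $F$, are real.

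The only genuinely delicate point is the \emph{symmetric} choice of face embeddings: one must glue the Schönflies discs so that a face and its conjugate receive conjugate embeddings while their shared real edges are respected, which is exactly what makes $\overline{F(z)}=F(\bar z)$ hold on $|\Gamma|$. Since this is already carried out in Theorem~\ref{realbcfromg2} and Corollary~\ref{r_a_gb_r_r_f}, here the argument reduces to citing those results; no new estimate or surgery is required beyond checking that the labelling furnished in the hypothesis is admissible in the sense of Definition~\ref{adm-v-l}, which is assumed.
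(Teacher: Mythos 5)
Your proposal is correct and follows essentially the same route as the paper, which states this corollary without a separate proof precisely because it is the chain you describe: Theorem~\ref{teo-c} makes the graph balanced, and the real-realization machinery of Section~\ref{crrfs} (symmetric Schönflies face embeddings, Corollary~\ref{bcfromg2}, the identity $\overline{F(z)}=F(\bar z)$ via the identity principle, and Proposition~\ref{lem_relfunc} with Lemma~\ref{lem-ratreal}) is exactly Corollary~\ref{r_a_gb_r_r_f}, which you cite. Your only deviation is cosmetic: since the admissible labelling is assumed in the hypothesis, the appeals to Theorem~\ref{teo-c} and Theorem~\ref{teo-a} are not strictly needed for the construction itself (they serve in the paper to guarantee such a labelling exists), but including them does no harm.
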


Therefore, resorting to Lemma $\ref{non-istop}$ we infer 
\begin{prop}\label{prop=pre-c}
Given $C = \{c_1, c_2, \ldots, c_n\}\subset\RPu$, the number of equivalence classes of real rational function with $C\subset\RPu$ as the set of critical points is at least the number of standard real globally balanced graph.
\end{prop}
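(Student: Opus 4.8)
The plan is to exhibit an explicit injection from the set of isotopy classes of standard real globally balanced graphs whose corners are the points of $C$ into the set of equivalence classes of real rational functions with critical set exactly $C$; once such an injection is in hand the asserted inequality is immediate. First I would fix a standard real globally balanced graph $\Gamma\subset\CPu$ whose corner set is $C$. By Theorem \ref{teo-c}, $\Gamma$ is locally balanced, hence by Definition \ref{balance} it is a balanced graph, and by Theorem \ref{gbg-adm} it therefore admits an admissible labelling. Corollary \ref{cor-realgb-is-lg} (resting on Theorem \ref{realbcfromg2} and Proposition \ref{lem_relfunc}) then produces a real rational function $f_\Gamma$ whose pullback graph relative to $\RPu$ is isotopic to $\Gamma$. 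Since the corners are precisely the vertices of valence $\geq 4$, the local-degree computation in the construction of \ref{bcconstruction} assigns each such vertex $v$ local degree $\deg(v)/2\geq 2$, so the critical set of $f_\Gamma$ is exactly $C$. This defines an assignment $\Gamma\mapsto[f_\Gamma]$ into equivalence classes; the particular representative $f_\Gamma$ chosen will be immaterial for the injectivity step.

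Next I would prove injectivity. Suppose $\Gamma_1,\Gamma_2$ are non-isotopic standard real globally balanced graphs with corner set $C$ and that $[f_{\Gamma_1}]=[f_{\Gamma_2}]$, so $f_{\Gamma_2}=\sigma\circ f_{\Gamma_1}$ for some $\sigma\in\mathrm{Aut}(\CPu)$. Because each $f_{\Gamma_i}$ is real with real critical points and carries $\RPu$ onto $\RPu$ (the edges of $\RPu\subset\Gamma_i$ cover the post-critical curve $\Sigma=\RPu$), we obtain $\sigma(\RPu)=\RPu$. Hence, exactly as in the proof of Lemma \ref{non-istop},
\[
\Gamma_2 \simeq f_{\Gamma_2}^{-1}(\RPu)=f_{\Gamma_1}^{-1}(\sigma^{-1}(\RPu))=f_{\Gamma_1}^{-1}(\RPu)\simeq \Gamma_1,
\]
contradicting non-isotopy. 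Thus distinct graphs yield distinct equivalence classes, and the count follows.

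I would remark that Lemma \ref{non-istop} is phrased for generic admissible graphs, but its argument uses only that the two pullback graphs relative to $\RPu$ are non-isotopic and that an equivalence $\sigma$ must fix $\RPu$ setwise; neither fact needs genericity, so the reasoning applies verbatim to arbitrary standard real globally balanced graphs, including those with higher-valence corners. Combining the two steps gives the injectivity of $\Gamma\mapsto[f_\Gamma]$, whence the number of equivalence classes of real rational functions with critical set $C$ is at least the number of standard real globally balanced graphs with corner set $C$.

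The main obstacle I anticipate is the bookkeeping guaranteeing that $f_\Gamma$ has critical set \emph{exactly} $C$ and not a proper subset: one must check that every corner genuinely becomes a ramification point and that the valence-$2$ vertices introduced in the enrichment contribute no spurious critical points, which is what the local-degree identity in \ref{bcconstruction} secures. The second delicate point is confirming $f_\Gamma(\RPu)=\RPu$, since this is precisely what makes $\sigma(\RPu)=\RPu$ available and thereby transfers the injectivity argument of Lemma \ref{non-istop} to the non-generic setting.
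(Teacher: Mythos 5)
Your proposal is correct and follows essentially the same route as the paper, which derives the proposition by combining Theorem \ref{teo-c} (real globally balanced implies balanced), Theorem \ref{gbg-adm} and Corollary \ref{cor-realgb-is-lg} to produce a real rational function per graph, and Lemma \ref{non-istop} for injectivity. Your explicit remarks — that Lemma \ref{non-istop}'s argument does not actually need genericity, and that one must verify the critical set is exactly $C$ and that $f_\Gamma(\RPu)=\RPu$ — are details the paper leaves implicit, so your write-up is, if anything, more careful than the original.
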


\subsection{{A combinatorial problem}}

\begin{probl}
Consider two integers $d\geq 2$ and $2\leq n\leq 2d-2$. Given $n$ points, say $\{c_1 < c_2 < \cdots < c_n\}\subset \RPu$ (with the cyclic order induced by the standard order of $\R$) in $\RPu$ and a list of integers numbers  $\textbf{a}=(a_1, a_2, \cdots ,a_n)\in\Z^n$, 
such that $1\leq a_k \leq d-1$ and $\sum_{1\leq k \leq n} a_k = 2d-2$.

Under the following constraints $\ref{ncross}$, how many ways of connecting the points $\{c_1 < c_2 < \cdots < c_n\}\subset \RPu$ by Jordan arcs are there ?

\vspace{-.5cm}
\begin{itemize}\item[]{\paragraph{Non-crossing pairing conditions: 
\vspace{-.25cm}}\begin{enumerate}\label{ncross}
\item[$(p.1)$]{the arcs are in $\mathbb{H}^{u}$ with endpoints in $\RPu$}
\item[$(p.2)$]{there is no intersection between those arcs, except possibly at their endpoints. ;}
\item[$(p.3)$]{There are $a_k$ arcs reaching $c_k$, but none of them connecting $c_k$ to itself.}\end{enumerate}}\end{itemize}

 \end{probl}


\begin{defn}[non-crossing pairing]
A \bif{non-crossing pairing of type} $(d, \textbf{a})$ is each solution for that problem, i.e., for each way of connecting the points  $\{c_1 < c_2 < \cdots < c_n\}\subset \RPu$ observing 
the constraints $\ref{ncross}$.
\end{defn}

We will relate this problem to a standard theme in enumerative combinatorics, which has a broad field of applications through mathematics such as commutative algebra, representation theory, intersection theory, degeneracies and permutations, for instance.


\subsubsection{SemiStandard Young Tableau}\label{ssyt}

Given an integer $d\geq 2$ and a $n$-upla $\textbf{a}=(a_1,a_2,\cdots , a_n)\in\Z^{n}$ of positive integers such that $a_1 + a_2 + \cdots + a_n = 2d - 2$.  A \emph{SemiStandard Young Tableau} of shape $2 \times (d - 1)$ and \emph{weight} $\textbf{a}$ is the filling of a tableaux made up by two rows of $d-1$ boxes with the integers in the set $\{1, 2, \cdots, n\}\subset\Z$ in such a way that:
\begin{itemize}
\item{the integer $i\in\{1,2, \cdots , n\}$ ocurrs $a_i$ times;}
\item{the integers increase weakly across each row from left to right and increase strictly down in each column.}
\end{itemize}
The pair $(d, \textbf{a})$ is the type of   \emph{SemiStandard Young Tableau}. We will use the abridged phrase, \emph{SSYT $(d, \textbf{a})$}, to denote a \emph{SemiStandard Young Tableau} of type $(d, \textbf{a})$.
\begin{ex}\begin{itemize}\item{\emph{SSYT} $(5; (2, 1, 1, 2, 1, 1)):$ \begin{ytableau}
1 & 1 & 4 & 4\\
2 & 3 & 5 & 6\\
\end{ytableau}\quad\begin{ytableau}
1 & 1 & 3 & 4\\
2 & 4 & 5 & 6\\
\end{ytableau}\quad\begin{ytableau}
1 & 1 & 2 & 3\\
4 & 4 & 5 & 6\\
\end{ytableau}}
\item{{ \emph{SSYT} $(8; (1, 3, 3, 5, 2)):$ \begin{ytableau}
1 & 2 & 2 & 3& 3 & 4 & 4\\
2 & 3 & 4 & 4 & 4 & 5 & 5\\
\end{ytableau} \quad  \begin{ytableau}
1 & 2 & 2& 3& 3& 3 & 4\\
2 & 4& 4& 4 & 4 & 5 & 5\\
\end{ytableau}}}
\end{itemize}
\end{ex}

As we can see above, given $(d, \textbf{a}) \in \N\times \N^n$ such that $a_1 + a_2 + \cdots + a_n = 2d - 2$  may exists more than only one SSYT of type $(d, \textbf{a})$. 

\begin{defn}
For an integer $d\geq 2$ and a $n$-upla $\textbf{a}=(a_1,a_2,\cdots , a_n)\in\Z^{n}_{\geq 1}$ of positive integers such that $a_1 + a_2 + \cdots + a_n = 2d - 2$, the \bif{Kostka Number} of shape $2 \times (d - 1)$ and type $\textbf{a}$, $K(d,{\textbf{a}})$, 
is the number of all \emph{SemiStandard Young Tableau} of shape $2 \times (d - 1)$ and type $\textbf{a}$.
\end{defn}

\begin{prop}\label{prop-nck1}
There is a $1$-to-$1$ correspondence between the set of all \emph{non-crossing pairing of type} $(d, \textbf{a})$ and the set of all \emph{SemiStandard Young Tableau} of shape $2 \times (d - 1)$ and type $\textbf{a}$.
\end{prop}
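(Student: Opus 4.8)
The plan is to exhibit an explicit bijection between non-crossing pairings of type $(d,\textbf{a})$ and semistandard Young tableaux of shape $2\times(d-1)$ and weight $\textbf{a}$, then check that it is well-defined and invertible. The natural device is to read the arcs of a non-crossing pairing left-to-right and record, for each arc, whether its endpoint at a given point $c_k$ is a \emph{left} endpoint (the arc opens) or a \emph{right} endpoint (the arc closes), much as one matches balanced parentheses. Since there are $2d-2$ arc-endpoints in total and each point $c_k$ receives exactly $a_k$ arcs, a non-crossing (planar, nested) pairing is precisely a balanced bracketing of the $2d-2$ endpoints. The key observation is that the non-crossing condition forces the matched endpoints to behave like the two rows of a $2\times(d-1)$ array.

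First I would set up the encoding. Label the $2d-2$ endpoints in the order they appear along $\RPu$ (counterclockwise in $\mathbb{H}^u$), the $a_k$ endpoints at $c_k$ all carrying the symbol $k$. Because the arcs are non-crossing and live in the upper half-plane, they define a perfect matching on these $2d-2$ endpoints that is non-crossing in the linear order; by the standard correspondence such a matching is a sequence of $d-1$ ``openers'' and $d-1$ ``closers'' that is balanced (every prefix has at least as many openers as closers). I would place the symbol of each opener into the top row and the symbol of its matching closer into the bottom row, in the order the openers are read. This produces a $2\times(d-1)$ array in which symbol $k$ occurs a total of $a_k$ times, so the weight is $\textbf{a}$.

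Next I would verify the two tableau inequalities. The top row is weakly increasing because openers are recorded in left-to-right order and their labels are non-decreasing along $\RPu$. For each column, the top entry (an opener) is strictly less than the bottom entry (its closer) because condition $(p.3)$ forbids an arc from joining $c_k$ to itself, so the two endpoints of any arc carry different labels $k<k'$, forcing a strict increase down every column. The subtler point is the weak increase along the bottom row and, jointly, the column-filling order: here I would use the non-crossing (nesting) structure to argue that reading openers left-to-right and assigning each its closer yields bottom entries that are also weakly increasing — this is exactly the classical encoding of non-crossing partitions by lattice words / ballot sequences, and it is where the planarity of the arcs is essential.

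The main obstacle, and the part requiring the most care, is showing the correspondence is a genuine bijection rather than merely a well-defined map: I must construct the inverse and confirm it lands among non-crossing pairings. Given an SSYT of shape $2\times(d-1)$ and weight $\textbf{a}$, I would recover the matching by pairing, for each column, the top symbol with the bottom symbol, and then placing the arcs so that endpoints are consumed in the order dictated by the columns; the two tableau conditions (weakly increasing rows, strictly increasing columns) translate back into the balanced-bracketing property that guarantees the resulting arcs can be drawn in $\mathbb{H}^u$ without crossings and never connect a point to itself. Checking that these two constructions are mutually inverse is routine once the encoding is fixed, so the whole proof reduces to pinning down the reading order precisely and verifying that the non-crossing condition is equivalent to the ballot/SSYT condition. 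It then follows that the two sets have the same cardinality $K(d,\textbf{a})$.
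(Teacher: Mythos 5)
Your encoding has a genuine flaw at exactly the step you flag as subtle, and the flaw is fatal to the map as you defined it. You place the closer of each arc in the bottom row \emph{directly beneath its own opener}, columns ordered by the openers' reading order. Take the nested pairing with arcs $\{c_1,c_4\}$ and $\{c_2,c_3\}$ (type $d=3$, $\textbf{a}=(1,1,1,1)$): the openers in order are $1,2$, and their matched closers are $4,3$, so your array is top row $(1,2)$, bottom row $(4,3)$ --- the bottom row \emph{decreases}, and you do not get an SSYT. The property ``reading openers left-to-right and assigning each its closer yields weakly increasing bottom entries'' is precisely the definition of a \emph{non-nesting} matching, not a non-crossing one; for non-crossing matchings nesting reverses the closer order. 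Your proposed inverse inherits the same problem: pairing top and bottom entries columnwise in the valid SSYT with rows $(1,2)$ and $(3,4)$ yields the arcs $\{c_1,c_3\}$, $\{c_2,c_4\}$, which \emph{cross}, so the inverse does not land in non-crossing pairings either. Your column-strictness argument (``the two endpoints of any arc carry different labels'') is also tied to the flawed columnwise matching and does not survive the repair, since after the repair the two entries of a column need not belong to the same arc.

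The repair, which is what the paper's proof does, is to decouple the columns from the arcs: fill the top row with the labels of the openers in reading order (equivalently, $f_k$ copies of $k$ where $f_k$ is the number of arcs at $c_k$ going to points of larger index) and fill the bottom row with the labels of the closers \emph{in their own reading order} ($b_k$ copies of $k$, arcs going to smaller index), each row taken in increasing $k$. Both rows are then weakly increasing by construction, and column strictness requires an actual counting argument: if the $i$-th opener and $i$-th closer both had label $k$, then since by $(p.3)$ every arc closing at a point $c_m$ with $m\le k$ opened at some $c_j$ with $j<m\le k$, the first $i$ closers force at least $i$ openers with label strictly less than $k$, contradicting that the $i$-th opener already has label $k$. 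For the inverse, you cannot pair within columns; instead, read off $f_k$ and $b_k$ from the tableau, form at each $c_k$ the word of $b_k$ closing brackets followed by $f_k$ opening brackets, note that the SSYT conditions are equivalent to this word being balanced, and take the \emph{unique} non-crossing matching of a balanced bracket word. With those corrections your opener/closer strategy becomes a complete and correct proof, and it is then essentially the same construction as the paper's (the paper states the row-filling map explicitly and asserts reversibility without detailing the bracket-word argument).
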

\begin{proof}
Given a \emph{non-crossing pairing of type} $(d, n, \textbf{a})$ we construct a \emph{SemiStandard Young Tableau} of shape $2 \times (d - 1)$ and type $\textbf{a}$ as it follows:
\begin{enumerate}
\item a tableaux made up by two rows of $d-1$ boxes;
\item fillout the first row from left to right with $a_1$ copies of the number $1$;
\item for $k\in\{2, 3, \cdots, n\}$ if $f_k$ is the number of arcs connecting $a_k$ to points $a_j$ with $j>k$ and $b_k$ is the number of arcs connecting $a_k$ to points $a_j$ with $j<k$;
\item From left to right, fill out the first row with $f$ copies of the number $k$;
\item From left to right, fill out the first row with $b$ copies of the number $k$;
\item Performing that Filling by choosing $k\in\{2, 3, \cdots, n\}$ in the crescent order we end up with a \emph{SemiStandard Young Tableau} of shape $2 \times (d - 1)$.
\end{enumerate}

 It is possible to reverse that construction. We then have a $1$-to-$1$ correspondence between those two sets. 


\end{proof}

\begin{defn}
Given a degree $d$ balanced graph $\Gamma$ with an enumeration $C=\{c_1, c_2, \cdots , c_n\}\subset\RPu$ of its corners, respecting the cyclic order 
in the counter-clockwise sense. The \emph{valence profile} of $\Gamma$, relative to that enumeration, is the ordered list $\textbf{v} = \textbf{v}_C=(d_1, d_2,\cdots , d_n)\in\Z^{n}_{\geq 1}$ where $d_i = {deg(c_i )}$. 
\end{defn}

Now we'll shall see how to build a degree $d$ standard real balanced graph $\Gamma$ with  \emph{valence profile} $\textbf{2a +2} = (2a_1+2, 2a_2 + 2,\cdots , 2a_n + 2)\in\Z^{n}_{\geq 1}$ out of a non-crossing pairing of type $(d, \textbf{a})$ for $\textbf{a} = \textbf{a} = (a_1, a_2,\cdots , a_n)\in\Z^{n}_{\geq 1}$.

\paragraph{Real balanced graph from non-crossing pairing }\label{rbg-nvp}

Consider a non-crossing pairing of type $(d, \textbf{a})$ of given $n\in\N$ points, $C=\{c_1, c_2, \cdots , c_n\}\subset\RPu$. 
The union of the (projective) real line, the arcs of that non-crossing pairing and its reflexion into the lower half-plane $\mathbb{H}^{d}:=\{ x+iy \in \C; y<0 \}$ with respect to ${\RPu}$ determines a connected graph, say $\Gamma$, having each point $c_i$ as a vertex with degree $2a_i + 2\in \N$. $\Gamma$ will have $2d$ faces. 

\begin{figure}[H]{
\centering
 \includegraphics[width=5cm]{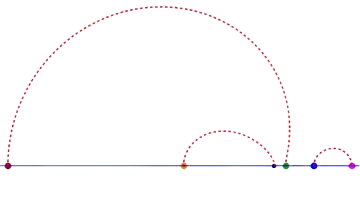}\qquad\qquad
\includegraphics[width=5cm]{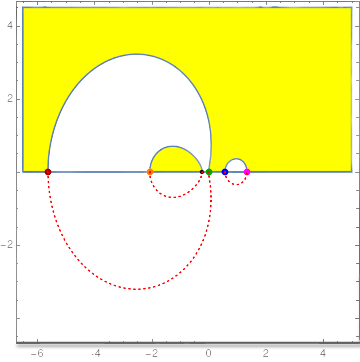}

\caption{Non-crossing pairing of type $(3, (1, 1, 1, 1, 1, 1))\in \N\times \N^{6}$ of $6$ real points}\label{match00}
}
\end{figure}


Due to the symmetry, it is then immediate that an alternating face coloring of its faces turns it into a  globally balanced graph. 

Therefore, we have just constructed a real globally balanced graph of degree $d$, say $\Gamma$, from that given non-crossing pairing of type $(d, \textbf{a})$. It's immediate that the valence profile of that real globally balanced graph is $(d, 2\textbf{a}+(2, 2, \ldots, 2))\in \N \times \N^{n} $. 


{\begin{mnthm}{B}[]\label{thm-b}
Given $d\geq 2$, for every $n\in \{2, 3, \cdots, 2d-2\}$ points in ${\RPu}$  and $\emph{\textbf{a}}= (a_1, a_2,\cdots , a_n)\in\N^{n}$, such that $\sum_{k=1}^{n} a_k = 2d-2$, there exist $K(d, \emph{\textbf{a}})$ standard real globally balanced graphs whose \emph{valence profile} is the 
integer vector $2\textbf{a} + 2$. 
\end{mnthm}}

\begin{proof}
It follows straightforwardly from Proposition $\ref{prop-nck1}$, since each \emph{non-crossing pairing} of type $K(d,{\textbf{a}})$ is associated with a unique labeled real balanced graph of degree $d$ and branch data $\textbf{a}=(a_1, a_2,\cdots , a_n)\in\Z^{n}_{\geq 1}$ (from that construction in $\ref{rbg-nvp}$).

\end{proof}

When the weight vector is $\textbf{a}= (1, 1,\cdots , 1)\in\N^{2d-2}$ the number $K(d, \emph{\textbf{a}})$ is the $d$-th \emph{Catalan Number}, $C(d) := \frac{1}{d}{2d-2\choose d-1}$. A real globally balanced graph coming through Theorem $\textrm{\ref{thm-b}}$ from a weight vector $\textbf{a}= (1, 1,\cdots , 1)\in\Z^{2d-2}_{\geq 1}$ has all its vertices with a valence $4$.

Then, as a corollary, we had achieved:

\begin{mncor}{B1}[]\label{cor-b1}
For every $2d-2$ points in ${\RPu}$ there exist $C(d) := \frac{1}{d}{2d-2\choose d-1}$ standard real globally balanced graphs with those points as vertices with valence $4$. 
\end{mncor}

Due to Proposition $\ref{prop=pre-c}$ we achieve
\begin{mnthm}{D}[]\label{teo-d}
Given a integer $d\geq 2$, for every $n\in \{2, 3, \cdots, 2d-2\}$ points in $\RPu$, $\{c_1, c_2,\cdots , c_n\}\subset\RPu$ , and vector $\emph{\textbf{a}}= (a_1, a_2,\cdots , a_n)\in\N^{n}$, such that $\sum_{k=1}^{n} a_k = 2d-2$, there exists at least $K(d, \emph{\textbf{a}})$ 
equivalence classes of real rational functions with degree $d\geq 2$ and critical points at $c_i$ with multiplicities $a_i $.
\end{mnthm}

In particular, from Collorary $\textrm{\ref{cor-b1}}$, it follows that,
\begin{mncor}{D1}[]\label{cor-d1}
For every $2d-2$ prescribed points in ${\RPu}$ there are at least $C(d) := \frac{1}{d}{2d-2\choose d-1}$ equivalence classes of generic real rational functions 
whose those chosen points are its critical points. 
\end{mncor}

\subsubsection{Proving the B. \& M. Shapiro conjecture}\label{shap-section}

In this section, we will present a new proof for the \emph{B.} \& \emph{M. Shapiro's conjecture}. 
This new proof remains at a more natural and simple level of complexity and depends much less on sophisticated non-discrete mathematical machinery than that obtained by Eremenko \& Gabrielov \cite{MR1888795}, \cite{EGSV05}, hence it is more accessible. Nevertheless, we still have to resort to Goldeberg's result \cite{Gold:91}. 


\begin{thm}[Eremenko-Gabrielov-Mukhin-Tarasov-Varchenko Theorem]\label{sha-conj}
A generic rational function $R:\overline{\C}\rightarrow\overline{\C}$ with only real critical points is equivalent to a real rational function.
\end{thm}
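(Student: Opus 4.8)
The plan is to prove the statement by a sandwiching (pigeonhole) argument that pits Corollary $\ref{cor-d1}$ against Goldberg's upper bound \cite{Gold:91}. Let $R$ be a generic degree $d$ rational function all of whose critical points are real. By genericity $R$ has exactly $2d-2$ distinct critical points, each of ramification index $2$, and since they are all real they form a set $C = \{c_1, c_2, \ldots, c_{2d-2}\} \subset \RPu$ of $2d-2$ distinct points. Thus $R$ belongs to the finite collection of complex degree $d$ rational functions whose critical set is exactly $C$, considered up to post-composition by automorphisms of $\CPu$.

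First I would invoke Goldberg's theorem \cite[Theorem~1.3.]{Gold:91}: the number $N_{\C}$ of equivalence classes of degree $d$ rational functions having precisely the points of $C$ as critical points satisfies $N_{\C} \leq C(d)$, where $C(d) = \frac{1}{d}\binom{2d-2}{d-1}$. Next I would invoke Corollary $\ref{cor-d1}$: for the same $2d-2$ prescribed real points $C$ there are at least $C(d)$ equivalence classes of generic real rational functions whose critical points are exactly $C$; denote this number $N_{\R}$, so that $N_{\R} \geq C(d)$.

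The key observation is that the equivalence relation underlying both counts is identical, namely equality up to post-composition by an automorphism of $\CPu$, since Lemma $\ref{non-istop}$ establishes inequivalence precisely in this sense. Consequently each of the $N_{\R}$ real classes is one of the $N_{\C}$ complex classes that happens to contain a real representative, giving $N_{\R} \leq N_{\C}$. Chaining the three inequalities yields $C(d) \leq N_{\R} \leq N_{\C} \leq C(d)$, forcing $N_{\R} = N_{\C} = C(d)$. Because the $N_{\R}$ real classes sit inside the $N_{\C}$ complex classes with coinciding cardinalities, the inclusion is a bijection, so every complex equivalence class with critical set $C$ must contain a real representative. In particular the class of $R$ does, which is exactly the assertion that $R$ can be transformed into a real rational function by post-composition with a suitable automorphism of $\CPu$.

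The only genuinely delicate point, and the step I expect to require the most care, is verifying that the two enumerations are taken with respect to the very same group action, so that the real classes inject cleanly into the complex ones. This is guaranteed because Lemma $\ref{non-istop}$ produces real functions that are pairwise inequivalent under the \emph{full} automorphism group of $\CPu$ rather than merely under real automorphisms, matching the equivalence used by Goldberg. Everything else reduces to aligning hypotheses (genericity of $R$ is equivalent to having $2d-2$ distinct simple real critical points) and to the elementary counting step.
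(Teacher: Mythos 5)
Your proposal is correct and follows essentially the same route as the paper: a sandwich argument pitting the lower bound $C(d)$ from Corollary~$\ref{cor-d1}$ against Goldberg's upper bound $C(d)$, forcing every equivalence class of degree $d$ rational functions with the prescribed real critical set to contain a real representative. The paper's own proof is just this in three lines; your additional check that both counts use the same equivalence relation (the full automorphism group of $\CPu$, as secured by Lemma~$\ref{non-istop}$) is a detail the paper leaves implicit, not a departure from its argument.
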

\begin{proof}
Given $C(R)\subset{\overline{\R}}$, from Corollary $\ref{cor-d1}$ the number of real non-equivalent classes of real rational function with critical set $C$ is at least $C_d = {\dfrac{1}{d} \binom{2d-2}{d-1}}$. But ,from Goldberg \cite{Gold:91}, the number of equivalente classes of generic rational function of prescribed critical set is at most $C_d$. Then, we are done. 
\end{proof}

\paragraph*{\begin{center}Acklowledgement\end{center}}

\vspace{-.5cm}
The author would like to express his gratitude to Sylvain Bonnot, his PhD advisor, for their thought-provoking discussions and for bringing his attention to the paper \cite{STL:15}. 
The author also thanks to the Institute for Pure and Applied Mathematics (IMPA - Rio de Janeiro) for its support and hospitality during the 2023 Post-Doctoral Summer Program. Some preliminary results of this paper, was already described in the author’s PhD thesis \cite{LA_2021} that was partially supported by CNPq, National Council for Scientific and Technological Development - Brazil.

\bibliographystyle{halpha}
\bibliography{bibliografia}  

\end{document}